\documentclass[10pt]{article}
\usepackage{amsmath,amssymb,amsfonts,amsthm} % Typical maths resource packages
\usepackage{mathtools}
\usepackage{stmaryrd} %Widerspruchspfeil
\usepackage{color}
\usepackage{calc}
\usepackage[version=3,arrows=pgf-filled]{mhchem}
\usepackage{csquotes}
\usepackage{colortbl}
\usepackage{extarrows}
\usepackage[ruled]{algorithm2e} %for algorithms
\usepackage{mathrsfs} %fancy caligraphy
\usepackage{import} % different paths for input calls
\usepackage{enumitem}
\usepackage{appendix}
\usepackage{fullpage}
\usepackage{pgfplots}
\usepackage{thmtools}   % for restatable
\usepackage{dsfont} % for \mathds{1}
\usepackage{tikz}
\usepackage{fullpage}
\usepackage{longtable} % to allow for pagebreaks in notation table
\usepackage{booktabs} % to add top/bottom row for table
\allowdisplaybreaks % allow equation break across pages

\usepackage{hyperref}
\hypersetup{
colorlinks   = true, %Colours links instead of boxes
urlcolor     = black, %Colour for external hyperlinks
linkcolor    = blue, %Colour of internal links
citecolor   = blue %Colour of citations, could be ``red''
}
\usepackage{colonequals} %to use the := symbol
\usepackage{soul} % strikethrough \st{}

\usepackage{dsfont}

\usepackage{subfig} 

\usepackage{pgfplots}
\pgfplotsset{compat=1.7}

\usepackage{nccmath}

\mathchardef\mhyphen="2D 

\theoremstyle{definition}
\newtheorem{defi}{Definition}[section]

\theoremstyle{remark}
\newtheorem{rem}[defi]{Remark}
\usepackage{etoolbox} % Required for the  & ext command for adding QED after remarks
\AtEndEnvironment{rem}{\qed}
\AtEndEnvironment{example}{\qed}
\newtheorem{example}[defi]{Example}

\theoremstyle{plain}
\newtheorem{theorem}[defi]{Theorem}
\newtheorem{lem}[defi]{Lemma}
\newtheorem{cor}[defi]{Corollary}
\newtheorem{prop}[defi]{Proposition}

\newcommand{\N}{\ensuremath{\mathbb{N}}}
\newcommand{\R}{\ensuremath{\mathbb{R}}}

\newcommand{\Z}{\ensuremath{\mathbb{Z}}}

\newcommand{\1}{\mathbf{1}}

\newcommand{\e}{\varepsilon}

\newlength{\leftstackrelawd}
\newlength{\leftstackrelbwd}
\def\leftstackrel#1#2{\settowidth{\leftstackrelawd}%
{${{}^{#1}}$}\settowidth{\leftstackrelbwd}{$#2$}%
\addtolength{\leftstackrelawd}{-\leftstackrelbwd}%
\leavevmode\ifthenelse{\lengthtest{\leftstackrelawd>0pt}}%
{\kern-.5\leftstackrelawd}{}\mathrel{\mathop{#2}\limits^{#1}}}

\def\RelEnt{\mathscr{H}}
\def\RF{\mathscr{R}}

\def\Ent{\mathrm{Ent}}

\newcommand{\calE}{\mathscr{E}}

\def\X{\mathcal{X}}
\def\Y{\mathcal{Y}}
\def\Z{\mathcal{Z}}
\def\P{\mathcal{P}}
\newcommand{\bone}{\mathbf 1}

\newcommand{\cD}{\mathcal D}

\newcommand{\cO}{\mathcal O}

\newcommand{\cX}{\mathcal X}
\newcommand{\cY}{\mathcal Y}

\newcommand{\E}{\mathbb{E}}

\newcommand{\dd}{\,\mathrm{d}}

\newcommand{\lrang}[1]{\left\langle {#1} \right\rangle}

\DeclareMathOperator\LSI{LSI}

\DeclareMathOperator\PI{PI}

\DeclareMathOperator\TV{TV}
\DeclareMathOperator\av{av}

\newcommand{\Var}{\operatorname{var}}

\DeclareMathOperator\sLSI{sLSI} % standard LSI appearing in SaloffCoste-Diaconis
\DeclareMathOperator\gPI{gPI}
\DeclareMathOperator\gLSI{gLSI}

\def\cg{\hat\mu} %coarse-grained dynamics
\def\eff{\eta} % effective dynamics
\def\aver{\mu^{\av}}
\def\stat{\rho} % invariant measure
\def\eps{\varepsilon} % epsilon
\def\sG{G} %slow-generator to replace C

\long\def\comm#1{{\color{orange}}}

\newcommand{\comment}[1]{} % remove details

\title{Non-equilibrium functional inequalities for finite Markov chains}
\author{Bastian Hilder\thanks{Department of Mathematics, Technische Universit\"at M\"unchen, Boltzmannstrasse 3, 85748, Garching b.\  M\"unchen, Germany.\\ Email: \href{mailto:bastian.hilder@tum.de}{bastian.hilder@tum.de}}, 
Patrick van Meurs\thanks{Faculty of Mathematics and Physics, Kanazawa University, Kakuma, Kanazawa 920-1192, Japan.\\ Email: \href{mailto:pjpvmeurs@staff.kanazawa-u.ac.jp}{pjpvmeurs@staff.kanazawa-u.ac.jp}}, 
Upanshu Sharma\thanks{School of Mathematics and Statistics, University of New South Wales, Sydney 2052, Australia.\\ Email: \href{mailto:upanshu.sharma@unsw.edu.au}{upanshu.sharma@unsw.edu.au}}}
\date{}

\begin{document}

\maketitle

\comm{These orange comments are NOT to be commented out! You can toggle them on/off in the preamble.}

\abstract{
Functional inequalities such as the Poincar\'e and log-Sobolev inequalities quantify convergence to equilibrium in continuous-time Markov chains by linking generator properties to variance and entropy decay. However, many applications, including multiscale and non-reversible dynamics, require analysing probability measures that are not at equilibrium, where the classical theory tied to steady states no longer applies.
We introduce generalised versions of these inequalities for arbitrary positive measures on a finite state space, retaining key structural properties of their classical counterparts. In particular, we prove continuity of the associated constants with respect to the reference measure and establish explicit positive lower bounds.
As an application, we derive quantitative coarse-graining error estimates for non-reversible Markov chains, both with and without explicit scale separation, and propose a quantitative criterion for assessing the quality of coarse-graining maps.
\newline
\newline
\textbf{Keywords.} functional inequalities; relative entropy; Fisher information; continuous-time Markov chains; coarse-graining
\newline
\textbf{Mathematics Subject Classification (2020).} 39B05; 39B62; 60J27; 60J28; 34C29; 34E13
} 

% 39B05: General theory of functional equations and inequalities
% 39B62: Functional inequalities, including subadditivity, convexity, etc.
% 60J45: Variational inequalities
% 60J27: Continuous-time Markov processes on discrete state spaces
% 60J28: Applications of continuous-time Markov processes on discrete state spaces
% 34C29: Averaging method for ordinary differential equations
% 34E13: Multiple scale methods for ordinary differential equations

\section{Introduction}

Functional inequalities such as the Poincar\'e inequality (PI) and the log-Sobolev inequality (LSI) are central tools in the quantitative analysis of Markov semigroups and diffusion processes. In the setting of a continuous-time Markov chain with an irreducible generator matrix, these inequalities quantify how fast observables and distributions relax toward the steady state (or towards equilibrium). The PI controls the exponential decay of the variance of observables, thereby determining the rate of convergence to steady state in the $L^2$-sense. The LSI provides a stronger, entropy-based control, implying exponential convergence of the law of the process to equilibrium in relative entropy. In both cases, the inequality constants (respectively called the PI and LSI constants) encapsulate the strength of the mixing mechanism in that small(/large) constants correspond to slow(/fast) 
equilibration. These inequalities have thus become fundamental in the analysis of convergence to equilibrium, concentration of measure, and the stability of stochastic systems, see~\cite{diaconisSaloffCoste96,Saloff-Coste97,AldousFill99,bobkovTetali06,MontenegroTetali06,erbarFathi18,schlichting2019} for a non-exhaustive list. 
Most of these references focus on the special setting of reversible Markov chains which 
allow for considerably deeper analysis.

However, the assumptions of reversibility and equilibrium (characterised by the steady state) is restrictive in certain applications.
For instance, complex chemical and biomolecular systems are often modelled via high-dimensional, possibly non-reversible, Markov chains constructed from molecular dynamics~\cite{Prinz11,SchutteSarich13}, which are routinely coarse-grained into clusters~\cite{DeuflhardWeber05,KubeWeber07,FackeldeySikorskiWeber18}. Inspired by coarse-graining approaches developed for  diffusion processes \cite{legollLelievre10,Chorin2003}, the recent work~\cite{HilderSharma24} by two of the authors provides a reduced dynamics on the clusters by defining an effective generator through conditional expectations. The accuracy of this (and related) reduced models depends on the mixing properties within each cluster, governed by measures that are \emph{not} the steady states. In these contexts, the classical PI and LSI inequalities, defined with the steady state as the reference measure, are no longer applicable and therefore lead to the following natural question.

\begin{center}
\emph{What is a meaningful generalisation of functional inequalities} \\
\emph{for reference measures other than the steady state? }
\end{center}

To the best of our knowledge, a comprehensive framework for non-equilibrium functional inequalities is not available in the literature. Therefore, in this work, we introduce generalised versions of the Poincar\'e and log-Sobolev inequalities (called gPI and gLSI) defined for arbitrary reference measures in the class of positive probability measures on the state space of the underlying continuous-time  Markov chain, see \eqref{def:genPI} and \eqref{def:genLSI}. These inequalities are built upon natural generalisations of the Dirichlet form and the Fisher information. We show that these latter generalisations retain key analytical properties of their classical counterparts: non-negativity, convexity, and continuity in their argument, see Proposition \ref{prop:GenDirFI-prop}. Using these generalised functionals, we establish a family of functional inequalities parametrised by the reference probability measure $\zeta$, whose optimal constants are denoted by $\alpha_{\gPI}(\zeta)$ and $\alpha_{\gLSI}(\zeta)$, see \eqref{def:PI-LSI-con}. Beyond the use of non-equilibrium reference measures, a crucial point of this study is that we do not require the Markov chain to be reversible (i.e.~to be in detailed balance), which is often assumed when dealing with such functional inequalities.  

We demonstrate that these generalised constants possess several desirable properties. As in the equilibrium case, there is a hierarchy for the functional inequalities, cf.~\cite{bobkovTetali06}. Specifically, the gLSI implies the gPI with $\alpha_{\gLSI}(\zeta) \leq \alpha_{\gPI}(\zeta)$, see Proposition \ref{prop:LSI-implies-Poincare}. Furthermore, both the gPI and gLSI inequalities satisfy the tensorisation property, see Proposition~\ref{prop:tensor}. Beyond these classical properties, we prove that the constants vary continuously with $\zeta$, see Proposition \ref{prop:gPI-char} and Theorem \ref{thm:continuity-alpha-LSI}. 
In addition, we provide strictly positive lower bounds for the constants, see Theorem~\ref{thm:pos}.
Specifically, these lower bounds also hold for the classical PI and LSI constants, by which we establish their positivity for irreducible generators that need not be reversible.
These properties require $\zeta$ to be strictly positive. However, this is typically guaranteed in applications where $\zeta$ is linked to solutions of forward Kolmogorov equations with irreducible generators, see for instance~\cite[Lemma~C.1]{HilderPeletierSharmaTse20}. 

We demonstrate the utility of these generalised inequalities by applying them to two problems where non-equilibrium analysis is essential. First, we derive quantitative stability estimates for the time-dependent probability distribution of a Markov chain relative to any other such measure $\zeta$, see Section \ref{sec:application-long-term-estimates}. Here $\zeta$ is time-dependent, and consequently the gPI and gLSI constants are time-dependent. We apply our continuity result and lower bounds to establish an exponential convergence result on the solutions to the Markov chains.

Second, we use the generalised functional inequalities to study clustering in Markov chains. In several works~\cite{legollLelievre10,ZhangHartmannSchutte16,LegollLelievreOlla17,DLPSS18,LegollLelievreSharma18,HartmannNeureitherSharma20,HilderSharma24} on coarse-graining estimates, including some of our own, functional inequalities are imposed for conditional measures that are not steady states of the underlying dynamics. Although this hypothesis has led to strong and elegant results, its structural meaning in a genuinely non-equilibrium setting has remained unclear. The present framework is a first step towards alleviating this issue by providing a systematic interpretation of such inequalities for continuous-time Markov chains. To demonstrate its applicability, we show that the LSI assumption with respect to non-steady states in~\cite{HilderSharma24} can be dropped entirely using our framework, where we specifically use the lower bounds. This provides quantitative coarse-graining error estimates both in the absence and presence of explicit scale separation in Markov chains, see Proposition~\ref{prop:error-estimate-no-scale-sep} and Theorem~\ref{thm:error-estimate-ms}.
Finally, our generalised inequalities allow us to provide a direct criterion for assessing the quality of a coarse-graining map, see Section \ref{sec:different-cg-maps}. In both of the aforementioned applications, the continuity of the functional inequality constants plays a central role in controlling deviations from the steady state.

Finally, we discuss how our functional inequalities can potentially be useful for constructing coarse-graining maps for Markov chains, and for $\Gamma$-calculus. In addition, we discuss how our framework can potentially be generalised to other settings, including countably infinite state spaces, and coarse-grained diffusion processes under appropriate additional assumptions. For the details on this, we refer to Section \ref{sec:discussion}.

In summary, this work extends the scope of classical functional inequalities to the non-stationary realm. It provides an analytical framework for studying -- at non-equilibrium -- relaxation, comparison, and reduction of continuous-time Markov chains.

\paragraph{Outline of the article and summary of notation.} In Section~\ref{sec:Not} we recall the classical notions of the PI and the LSI. Section~\ref{sec:Gen} presents the generalised functionals and introduces the generalised Poincar\'e and log-Sobolev constants along with their basic properties. Section \ref{sec:properties} contains our main results on important properties of the generalised constants. In Section~\ref{sec:applications} we demonstrate the utility of these results in two applications, and in Section \ref{sec:discussion} we speculate how these results could be applied to related research areas. Finally, Appendix \ref{app:failure-classical-inequalities} demonstrates that the naive generalisation of the classical inequalities generally fail to give a well-defined theory.

The following table summarises important notation used throughout this paper. Additional  notation will be introduced in Section~\ref{sec:notation}. 
\begin{center}
\begin{longtable}{@{\extracolsep{\stretch{1}}}*{3}{l}@{}}
\toprule
$\X$, $\Y$, $\Z$ & finite state space & \\
$|\Z|$ & cardinality of a finite space $\Z$ & \\
$\R^{|\Z|}_{>0}$ & vectors in $\R^{|\Z|}$ with strictly positive coordinates & \\
$\1$ & constant unit function & Sec.~\ref{sec:notation}\\
$\P(\Z)$ & space of probability measures on $\Z$ & \\
$\P_+(\Z)$ & space of strictly positive probability measures on $\Z$ & \\
$\eta_*$ & lower bound for $\eta\in \P_+(\Z)$ & Thm.~\ref{thm:pos} \\
$\cD_\eta(\Z)$ & space of probability densities with respect to $\eta\in \P_+(\Z)$  & \eqref{def:SpaceDen} \\
$M,L$ & irreducible generators for continuous-time Markov chains & \\
$M_*$ & smallest positive entry in generator $M$ & Thm.~\ref{thm:pos} \\
$M_\zeta$ & $\zeta$-symmetrised version of generator $M$ & \eqref{eq:symGen} \\
$\pi$ & steady state of continuous-time Markov chain & \\
$\Var_\eta$ & Variance with respect to probability measure $\eta$ & \eqref{def:Var} \\
$\RelEnt_\eta$ & relative entropy with respect to probability measure $\eta$ & \eqref{def:relEnt}\\
$\Ent_\eta$ & centred entropy with respect to probability measure $\eta$ & \eqref{eq:CentEnt}\\
$\calE$ & classical Dirichlet form with respect to the steady state $\pi$ & \eqref{def:Dirichlet}  \\
$\calE_\zeta$ & generalised Dirichlet form with respect to probability measure $\zeta$ & \eqref{def:genDirichlet} \\
$\RF$ & classical Fisher information with respect to the steady state $\pi$ & \eqref{def:Fisher}\\
$\RF_\zeta$ & generalised Fisher information with respect to probability measure $\zeta$ & \eqref{def:genFisher}\\
$\alpha_{\PI}$, $\alpha_{\gPI}$ & classical and generalised Poincar\'e constants &  \eqref{def:class-PI-con},~\eqref{def:PI-LSI-con}\\
$\alpha_{\LSI}$, $\alpha_{\gLSI}$ & classical and generalised log-Sobolev constants & \eqref{def:class-LSI-con},~\eqref{def:PI-LSI-con} \\
$\alpha_{\sLSI}$ & standard LSI constant & \eqref{eq:altLSI-intro} \\
$\xi$ & coarse-graining map & \eqref{eq:CGDyn-def} \\
$\Lambda_y$ & $y$-level set of $\xi : \X \to \Y$ & Sec.~\ref{sec:estimates-without-scale-sep} \\
\bottomrule                            
\end{longtable}
\end{center}

\section{The classical functional inequalities}\label{sec:Not}

In this section, we introduce the classical notions of the PI and the LSI. Here and elsewhere in the paper, we 
fix $\Z$ as a finite state space with $|\Z| \geq 2$ the number of states, and fix $M\in \R^{|\Z|\times|\Z|}$ as an irreducible, but not necessarily reversible, generator (or transition rate matrix) on $\Z$.
We will interchangeably interpret $M$ as a matrix or as an operator. In addition, let $\P(\Z)$ be the space of probability measures on $\Z$. Similar to $M$, we will treat measures on $\Z$ as either vectors in $\R^{|\Z|}$ or functions. The law $t\mapsto \mu_t\in \P(\Z)$ of the Markov chain driven by $M$ evolves according to the forward Kolmogorov equation
\begin{align}\label{eq:Nota-forKol}
\left\{\begin{aligned}
    \frac{\dd\mu}{\dd t} &= M^T \mu, \\
    \mu\big|_{t=0} &=\mu_0,
\end{aligned}\right.
\end{align}
with initial data $\mu_0\in\P(\Z)$. Irreducibility ensures that this Markov chain admits a unique strictly positive steady state $\pi\in\P_+(\Z)$, i.e.\ $M^T\pi =0$. Here, we denote the space of strictly positive probability measures by $\P_+(\Z)$.

To define the PI we introduce 
$$\mathbb E_\pi[f]\coloneqq \sum_{z\in \Z} f(z)\pi(z)$$
for the expectation of $f \in \R^{|\Z|}$ with respect to $\pi$ and 
$$(f,g)_\pi \coloneqq \sum_{z\in \Z} f(z)g(z)\pi(z)$$ 
for the the $\pi$-weighted inner product on $\R^{|\Z|}$.
\begin{defi}
For $f\in \R^{|\Z|}$ we define the \emph{variance} and \emph{Dirichlet form} of $f$ 
respectively  as
\begin{align}
    \Var_\pi(f)&\coloneqq  \mathbb E_\pi[f^2] - \bigl(\mathbb E_\pi[f]\bigr)^2 = (f^2,1)_\pi - \bigl((f,1)_\pi\bigr)^2, \label{def:Var}\\
     \calE_\pi(f,M) &\coloneqq (f,-Mf)_\pi 
        = - \sum_{z,z'\in\Z} f(z)f(z')M(z,z') \pi(z)
        = \frac12 \sum_{z,z'\in\Z} \big( f(z) - f(z') \big)^2 M(z,z') \pi(z). \label{def:Dirichlet}
\end{align} 
\end{defi}

Henceforth, for $\calE_\pi(f,M)$ and other objects depending on $M$, we drop $M$ from the notation and write $\calE_\pi(f)$ if there is no risk of confusion about the underlying generator. 
 
Classically, the Dirichlet form is defined as $\tilde \calE_\pi(f,g)=(f,-Mg)_\pi$ and requires $M$ to be reversible. Yet, throughout this paper we only deal with $f=g$, and therefore introduce the Dirichlet form as a function of one variable. Since we do not require $M$ to be reversible, $\calE_\pi$ is not a Dirichlet form in the classical sense. Nevertheless, it is typically still called a Dirichlet form by a slight abuse of notation. 

\begin{defi}
    The (classical) \emph{Poincar\'e inequality (PI)} is satisfied if there exists a constant $\alpha>0$ such that
\begin{equation}\label{def:cPI}
    \forall f\in \R^{|\Z|}: \ \ \Var_\pi(f)\leq \frac{1}{\alpha}\calE_\pi(f).
\end{equation}
The supremum over all possible choices of  $\alpha$ is called the \emph{Poincar\'e constant}. We denote it by
\begin{equation}\label{def:class-PI-con}
    \alpha_{\PI}(M) \coloneqq \inf_{\substack{f\in\R^{|\Z|} \\ f\notin \langle\1\rangle}} \frac{\calE_\pi(f,M)}{\Var_\pi(f)}.
\end{equation}
\end{defi}

Next, we introduce the classical notion of the log-Sobolev inequality. Throughout this article, let $\R^{|\Z|}_{>0}$ be the space of vectors in $\R^{|\Z|}$ with strictly positive coordinates. Then, let
\begin{align}\label{def:SpaceDen}
    \cD_\pi(\Z) \coloneqq \bigl\{\varphi \in \R_{>0}^{|\Z|} \, : \,  \mathbb E_\pi[\varphi] = 1\bigr\}
\end{align}
be the space of positive densities with respect to $\pi$. 
In the following, we use $f,g$ for functions on $\R^{|\Z|}$ and $\varphi,\psi$ for densities with respect to some probability measure on $\R^{|\Z|}$. 
\begin{defi}
     For $\varphi\in \cD_\pi(\Z)$ we define the \emph{relative entropy} and ($M$-)\emph{Fisher information} 
     respectively as
    \begin{align}
        \RelEnt_{\pi}(\varphi) &\coloneqq \sum_{z\in\Z}  \varphi(z)\log \varphi(z) \pi(z), \label{def:relEnt} \\
        \RF_\pi(\varphi,M) &\coloneqq \bigl(\varphi , -M \log \varphi\bigr)_{\pi} =  -\sum_{z,z'\in\Z} \varphi(z) \log \varphi(z') M(z,z') \pi(z). \label{def:Fisher}
    \end{align}
\end{defi}
\begin{defi}
    The \emph{log-Sobolev inequality (LSI)} is satisfied if there exists a constant $\alpha > 0$ such that 
\begin{equation}\label{def:cLSI}
\forall \varphi\in \cD_\pi(\Z): \ \ \RelEnt_\pi(\varphi)\leq  \frac{1}{\alpha} \RF_\pi(\varphi,M).
\end{equation}
The supremum over all possible choices of $\alpha$ is called the \emph{log-Sobolev constant}. We denote it by
\begin{equation}\label{def:class-LSI-con}
    \alpha_{\LSI}(M) \coloneqq \inf_{\substack{\varphi\in \cD_\pi(\Z)\\\varphi\neq \1}}\frac{\RF_\pi(\varphi,M)}{\RelEnt_\pi(\varphi)}.
\end{equation}
\end{defi}

Strictly speaking,~\eqref{def:cLSI} is the so-called modified LSI, as opposed to the standard LSI introduced in Remark~\ref{rem:LitLSI-PI}. Since this paper predominantly deals with generalising~\eqref{def:cLSI}, we simply refer to it as the LSI instead of the modified LSI. We recall that, similar to the abbreviated notation $\calE_\pi(f)$, we simply write $\alpha_{\PI},\,\RF_\pi(\varphi),\,\alpha_{\LSI}$ whenever the underlying generator is clear from the context.

A key difference between the two functional inequalities above is the domains of functions $f$ and $\varphi$ over which these inequalities are defined. The following remark discusses this. 
\begin{rem}\label{rem:cenEnt}
    First, the relative entropy $\RelEnt_\pi(f)$ is only defined for $f\in \R^{|\Z|}_{\geq 0}$ and the Fisher information $\RF_\pi(f)$ only for $f\in \R^{|\Z|}_{>0}$. Thus, the full class of $f \in \R^{|\Z|}$ in the PI cannot be considered for the LSI.

    Second, for the LSI, the inequality in \eqref{def:cLSI} could be considered for all $f\in \R^{|\Z|}_{>0}$ rather than the restricted class $\varphi\in \cD_\pi(\Z) \subset \R^{|\Z|}_{>0}$.
    Yet, this inequality would be meaningless to consider as the LSI. To see this, note that any
    $f\in \R^{|\Z|}_{>0}$ can be uniquely decomposed as $ \beta \varphi$ with $ \beta > 0$ and $\varphi\in \cD_\pi(\Z)$; in particular $ \beta = \mathbb E_{\pi}[f]$ and $\varphi = \frac{f}{\mathbb E_{\pi}[f]}$. Then
    \begin{align*}
        \RelEnt_\pi(f) =  \beta \RelEnt_\pi(\varphi) + \beta\log\beta, \ \ \RF_\pi(f) = \beta\RF_\pi(\varphi).
    \end{align*}
Consequently, the LSI constant would read
    \begin{equation*}
        \inf_{\varphi \in  \cD_\pi(\Z)} \inf_{\beta > 0} \frac{\RF_\pi(\varphi)}{\RelEnt_\pi(\varphi)+\log\beta},  
    \end{equation*}
    which equals $- \infty$ due to the minimisation over $\beta$.

    This issue can be fixed by working with the centred entropy instead, defined as 
    \begin{equation}\label{eq:CentEnt}
        \Ent_\pi(f) \coloneqq \RelEnt_\pi(f) - \mathbb E_\pi(f) \log\bigl(\mathbb E_\pi(f)\bigr). 
    \end{equation}
    In this case $\Ent_\pi(\beta\varphi)=\beta\RelEnt_\pi(\varphi)$ and thus
    \[
      \alpha_{\LSI} 
      = \inf_{\varphi \in  \cD_\pi(\Z)} \frac{\RF_\pi(\varphi)}{\RelEnt_\pi(\varphi)}
      = \inf_{f\in \R^{|\Z|}_{>0}} \frac{\RF_\pi(f)}{\Ent_\pi(f)}.
    \]
    Consequently, all results pertaining to  the LSI inequality can be extended to $\R^{|\Z|}_{>0}$ using this centred entropy. However, since we only consider the case of densities, where $\Ent_\pi(\varphi) = \RelEnt_\pi(\varphi)$ for $\varphi\in D_\pi(\Z)$, we use the setting \eqref{def:cLSI} throughout this paper.
    \end{rem}

The following remark provides some context for these functional inequalities in the setting of Markov chains. 

\begin{rem}\label{rem:LitLSI-PI}
The Poincar\'e and log-Sobolev inequality have been used to quantify the convergence to equilibrium in Markov chains, see~\cite{bobkovTetali06} and references therein. In particular it follows that the solution $\mu_t$ to the forward Kolmogorov equation \eqref{eq:Nota-forKol}  converges to the steady state $\pi$ exponentially fast in variance and relative entropy with rates $\alpha_{\PI}$ and $\alpha_{\LSI}$ respectively~\cite[Section~1]{bobkovTetali06}, due to the relations 
\begin{equation} \label{ddt:var:H}
  \begin{aligned}
      \frac{\dd}{\dd t}\Var_\pi \Big(\frac{\mu_t}{\pi} \Big)
      &= -2 \calE_\pi\Big(\frac{\mu_t}{\pi}\Big) 
  \leq - 2 \alpha_{\PI} \Var_\pi\Big(\frac{\mu_t}{\pi}\Big), \\
  %\qquad \text{and} \qquad 
  \frac{\dd}{\dd t}\RelEnt_\pi \Big(\frac{\mu_t}{\pi} \Big)
  &= -\RF_\pi\Big(\frac{\mu_t}{\pi}\Big)
  \leq - \alpha_{\LSI} \RelEnt_\pi\Big(\frac{\mu_t}{\pi}\Big). 
  \end{aligned}  
\end{equation}
Regarding both constants, $0<\alpha_{\LSI}\leq 2 \alpha_{\PI}$ (see the last line of the proof of \cite[Proposition~3.6]{bobkovTetali06}). Furthermore, in the special case of reversible Markov chains, $\alpha_{\PI}$ is the spectral-gap associated to the generator matrix $M$~\cite[Section~2.2]{diaconisSaloffCoste96}. 

We also point out that the Fisher information is connected to the Dirichlet form via
\begin{equation*}
    \RF_\pi(\varphi) = \bigl(\varphi,-M\log\varphi \bigr)_\pi \geq \frac12 \calE_\pi(\sqrt{\varphi}).
\end{equation*}
This gives rise to the \textit{standard LSI (sLSI)} (we follow the naming convention introduced in~\cite{bobkovTetali06}), which measures the ratio between $\calE_\pi(f)$ and $\Ent_\pi(f^2)$ (introduced in~\eqref{eq:CentEnt}). The corresponding constant is given by~\cite[Equation~(1.7)]{diaconisSaloffCoste96}
\begin{equation} \label{eq:altLSI-intro}
    \alpha_{\sLSI}(M) \coloneqq \inf_{\substack{f\in\R^{|\Z|_{>0}}\\f\notin \langle\1\rangle}}\frac{\calE_\pi(f,M)}{\Ent_\pi(f^2)}.
\end{equation}
\end{rem}

\section{Generalised functional inequalities}\label{sec:Gen}

As stated in the introduction, our aim is to generalise the PI and the LSI to the non-equilibrium setting. This translates to defining these inequalities with the steady state $\pi$ replaced by a general strictly positive probability measure $\zeta\in\P_+(\Z)$. 

We first point out that the naive generalisation of replacing in the PI and the LSI $\pi$ by $\zeta$ does not work, even though the definitions of all functionals can directly be extended to any $\zeta \in\P_+(\Z)$. We focus on the classical Poincar\'e inequality~\eqref{def:cPI}, whose naive generalisation reads as 
\begin{equation}\label{def:class-PI-con-genNaive}
    \tilde \alpha_{\PI}(\zeta, M) \coloneqq \inf_{\substack{f\in\R^{|\Z|} \\ f\notin \langle\1\rangle}} \frac{\calE_\zeta(f, M)}{\Var_\zeta(f)},
\end{equation}
where $\langle\1\rangle$ denotes the space of constant functions. As for the classical inequality, we
need to impose $f\notin \langle\1\rangle$ to ensure that $\Var_\zeta(f)\neq 0$. A simple calculation (see Proposition~\ref{prop:cFI-Fail} in the appendix) then shows that $\tilde \alpha_{\PI}(\zeta) = -\infty$ for any $\zeta\neq \pi$.

In this section we first introduce generalisations of the Dirichlet form and the Fisher information, which are then used to define non-equilibrium versions of Poincar\'e and log-Sobolev inequalities. We conclude this section by proving a number of properties of these generalisations.  

\subsection{Notation}\label{sec:notation} 

Other than the Dirichlet form and the Fisher information, we extend the other functionals and notation from Section \ref{sec:Not} by replacing $\pi$ in their definitions by $\zeta \in\P_+(\Z)$. In this manner we define $\mathbb E_\zeta$, $(f,g)_\zeta$, $\Var_\zeta$, $\cD_\zeta(\Z)$ and $\RelEnt_{\zeta}$. In addition, we define for $f\in \R^{|\Z|}$
\begin{align*}
    \|f\|^2\coloneqq \sum_{z\in\Z} f^2(z), \quad \text{and} \quad \|f\|^2_\zeta =\sum_{z\in\Z} f^2(z)\zeta(z).
\end{align*}
Let $\1 : \Z \to \R$ be the constant function equal to $1$. For $f_1, \ldots, f_k \in \R^{|\Z|}$, we denote their span by $\langle f_1, \ldots, f_k \rangle$. For $\zeta \in \P_+(\Z)$ and $U \subset \R^{|\Z|}$ a subspace, we denote by $U^{\perp_\zeta}$ the orthogonal complement with respect to the inner product $(f,g)_\zeta$, e.g.
\begin{align*}
\langle\1\rangle^{\perp_\zeta} = \{f\in \R^{|\Z|}: \ (f,\1)_\zeta = 0 \}.  
\end{align*}
In a similar manner, we also introduce the orthogonal complement in the flat geometry as 
\begin{align*}
\langle\1\rangle^{\perp} = \Bigl\{f\in \R^{|\Z|}: \ (f,\1)=\sum_{z\in\Z} f(z) = 0 \Bigr\}.  
\end{align*}
In particular, since $\R^{|\Z|}= \langle\1\rangle \oplus \langle\1\rangle^{\perp_\zeta}$, we have 
\begin{align}\label{eq:Rz-directSum}
  \forall f\in \R^{|\Z|}, \ \exists! c\in \R, \ \exists! h\in \langle\1\rangle^{\perp_\zeta}: \ \   f=c\1 + h.
\end{align}
Finally, we set
   \[
     \zeta_* \coloneqq \min_{z \in \Z} \zeta(z) > 0.
   \]

\subsection{Generalised Dirichlet form and the Fisher information}\label{sec:def-gDF-gFI}

As already discussed, a key interpretation for the classical notions of Dirichlet form and the Fisher information is that these objects can be respectively derived as the dissipation of variance and relative entropy with respect to the steady state $\pi$ along the solution $t\mapsto \mu_t$ of the forward Kolmogorov equation~\eqref{eq:Nota-forKol}, see \eqref{ddt:var:H}.

Following this connection, to motivate generalisation towards non-equilibrium measures, we consider two time-dependent solutions $t\mapsto \mu_t,\zeta_t$ of~\eqref{eq:Nota-forKol}, and consider the \emph{non-equilibrium} evolution of variance and relative entropy, i.e.\ $\frac{\dd}{\dd t}\Var_{\zeta_t}(\frac{\mu_t}{\zeta_t})$ and $\frac{\dd}{\dd t}\RelEnt_{\zeta_t}(\frac{\mu_t}{\zeta_t})$. We want our generalisation to be such that these derivatives satisfy similar equations as in \eqref{ddt:var:H}, see \eqref{ddt:var:H:zeta} below. This leads to the following generalisations of the Dirichlet form and the Fisher information.  

\begin{defi}
Let $\zeta\in \P_+(\Z)$. For any $f\in \R^{|\Z|}$ we define the \emph{generalised Dirichlet form} with respect to $\zeta$  as 
\begin{equation}\label{def:genDirichlet}
    \calE_\zeta(f,M) 
    \coloneqq  \mfrac12 \sum_{z,z'\in\Z} M(z,z')\zeta(z)\bigl(f(z)-f(z')\bigr)^2 
    =  (f,-Mf)_\zeta + \mfrac12(1,Mf^2)_\zeta.
\end{equation}
For any $\varphi\in \cD_\zeta(\Z)$ (defined in~\eqref{def:SpaceDen}) we define the \emph{generalised Fisher information} with respect to $\zeta$ as 
\begin{equation}\label{def:genFisher}
    \RF_\zeta(\varphi,M) \coloneqq 
    \sum_{z,z'\in \Z} M(z,z')\zeta(z) \varphi(z) \biggl[ \frac{\varphi(z')}{\varphi(z)} - 1 -\log\biggl(\frac{\varphi(z')}{\varphi(z)}\biggr)\biggr]. 
\end{equation}
\end{defi}
As in Section \ref{sec:Not}, we write $\calE_\zeta(f),\,\RF_\pi(\varphi)$ in place of $\calE_\zeta(f,M),\,\RF_\pi(\varphi,M)$ if there is no risk of confusion about the underlying generator.

Following our motivation above, it is easy to check that for two solutions $t\mapsto \mu_t,\zeta_t$ of~\eqref{eq:Nota-forKol}, we indeed have the property 
\begin{equation} \label{ddt:var:H:zeta}
 \frac{\dd}{\dd t}\Var_{\zeta_t}\biggl(\frac{\mu_t}{\zeta_t}\biggr)= -2\calE_{\zeta_t}\biggl(\frac{\mu_t}{\zeta_t}\biggr), \ \ \frac{\dd}{\dd t}\RelEnt_{\zeta_t}\biggl(\frac{\mu_t}{\zeta_t}\biggr)= -\RF_{\zeta_t}\biggl(\frac{\mu_t}{\zeta_t}\biggr),   
\end{equation}
i.e. the generalised Dirichlet form and the generalised Fisher information are indeed the dissipation of \emph{non-equilibrium} variance and relative entropy, i.e.\ dissipation along non steady-state solutions of~\eqref{eq:Nota-forKol}. Furthermore, these objects reduce to their classical counterparts~\eqref{def:Var} and \eqref{def:Fisher} when $\zeta$ is replaced by the steady state $\pi$, i.e.\ when $M^T\zeta=0$. 
Regarding the generalised Dirichlet form, it equals the right-hand side of \eqref{def:Dirichlet}. Yet, the last equality in \eqref{def:Dirichlet} relies on $\pi$ being invariant. The second equality in \eqref{def:genDirichlet} shows the additional term which is needed to have equality for non steady states.
Regarding the generalised Fisher information, we note that it can be derived as a \emph{modulation} of its classical counterpart~\cite[Section~5.1]{hilder17}. 
These factors explain the label of `generalisations' that has been used above. 

We further remark that $\calE_\zeta$, similar to $\calE_\pi$, is not a Dirichlet form in the strict sense, and that
the domain of definition for $\calE_\zeta$ and $\RF_\zeta$ with respect to $\zeta$ can be further enlarged to general probability measures by extending them via lower-semicontinuous envelopes. We skip these details here for simplicity of presentation. 

The generalised Dirichlet form and the Fisher information share several properties with their classical variants. We summarise these in the following result. 

\begin{prop}\label{prop:GenDirFI-prop}
    For any $\zeta\in \P_{+}(\Z)$, the generalised Dirichlet form~\eqref{def:genDirichlet} and the generalised Fisher information~\eqref{def:genFisher} with respect to $\zeta$  satisfy the following.
    \begin{enumerate}
        \item Continuity: The maps $(\zeta,f)\mapsto \calE_\zeta(f)$ and $(\zeta,\varphi)\mapsto \RF_\zeta(\varphi)$ are continuous on $\P_+(\Z)\times \R^{|\Z|}$ and $\{(\zeta,\varphi) \,:\,\zeta\in \P_+(\Z), \varphi\in  \cD_\zeta(\Z)\}$ respectively. 
        % \US{Is this domain definition ok?} \pvm{Yes}
        \item Non-negativity: $\calE_\zeta(f),\RF_\zeta(\varphi)\geq 0$ with equality if and only if $f \in \lrang{\bone}$ and $\varphi=\1$ respectively.
        \item Convexity: $\calE_\zeta(\cdot)$ and $\RF_\zeta(\cdot)$ are convex on their domains of definition.
        \item Connection: Let $f\in\R^{|\Z|}$ with $\E_\zeta[f]=0$. Then, for all $\delta \in \R$ with $|\delta| \leq \frac12 \|f\|_\infty^{-1}$, we have $\1+ \delta f \in \cD_\zeta(\Z)$ and
        \begin{equation}\label{eq:genRF-lin}
            \RF_\zeta(\1+ \delta f) = \delta^2 \bigl(\calE_\zeta(f) + \delta R \bigr),
        \end{equation}
        where the remainder term satisfies $|R| \leq 3 \|M\|_\infty |\Z| \|f\|_\infty^3$.  
    \end{enumerate}
\end{prop}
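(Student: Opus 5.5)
The plan is to treat all four items directly from the explicit formulas \eqref{def:genDirichlet} and \eqref{def:genFisher}. The single structural fact used throughout is that every summand carries the weight $M(z,z')\zeta(z)$, which is $\geq 0$ for $z\neq z'$. The diagonal terms vanish: in $\calE_\zeta$ because $(f(z)-f(z))^2=0$, and in $\RF_\zeta$ because the bracket equals $1-1-\log 1=0$. Hence both functionals are finite sums of nonnegative weights times nonnegative expressions.

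\textbf{Continuity.} Each summand of $\calE_\zeta$ is a polynomial in $(\zeta,f)$. Each summand of $\RF_\zeta$ can be rewritten as $\zeta(z)\bigl[\varphi(z')-\varphi(z)-\varphi(z)\log\varphi(z')+\varphi(z)\log\varphi(z)\bigr]$, which is continuous wherever $\varphi>0$. The constraint set $\{\varphi\in\cD_\zeta(\Z)\}$ is just a subset of $\P_+(\Z)\times\R^{|\Z|}_{>0}$, so continuity follows from continuity on that larger set.

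\textbf{Non-negativity.} For $\calE_\zeta$, every term is nonnegative. Equality forces $f(z)=f(z')$ whenever $M(z,z')>0$, and since $\zeta>0$, irreducibility then propagates along paths to give $f\in\langle\1\rangle$. For $\RF_\zeta$, we use $x-1-\log x\geq 0$, with equality iff $x=1$. Equality therefore forces $\varphi(z')=\varphi(z)$ along all edges, so $\varphi$ is constant, and $\E_\zeta[\varphi]=1$ then gives $\varphi=\1$.

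\textbf{Convexity.} $\calE_\zeta$ is a nonnegative combination of squares of linear maps, hence convex. For $\RF_\zeta$, each summand is $\zeta(z)M(z,z')$ times $g(a,b)=b-a-a\log(b/a)$, where $a=\varphi(z)$ and $b=\varphi(z')$. Writing $g(a,b)=a\,h(b/a)$ with $h(x)=x-1-\log x$ convex exhibits $g$ as the perspective of a convex function, so $g$ is jointly convex on $\R_{>0}^2$. The domain $\cD_\zeta(\Z)$ is convex, being an affine slice of the positive cone.

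\textbf{Connection.} This is the main computation. Let $|\delta|\leq\frac12\|f\|_\infty^{-1}$. Then $\varphi=\1+\delta f\geq \frac12$, and $\E_\zeta[\varphi]=1$, so $\varphi\in\cD_\zeta(\Z)$. Rather than expanding the ratio inside $h$, I will use the form $g(a,b)=b-a-a\log b+a\log a$ with $a=1+\delta u$, $b=1+\delta v$, where $u=f(z)$ and $v=f(z')$. Taylor expanding $g$ around $(1,1)$: we have $g(1,1)=0$ and $\nabla g(1,1)=0$, and the Hessian there is $\begin{pmatrix}1&-1\\-1&1\end{pmatrix}$. Hence
\[
g=\frac{\delta^2}{2}(v-u)^2+\delta^3 r(u,v).
\]
Summing with weights $M(z,z')\zeta(z)$, the quadratic part reproduces exactly $\calE_\zeta(f)$.

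The main obstacle is the explicit bound $|R|\leq 3\|M\|_\infty|\Z|\|f\|_\infty^3$, which requires controlling $r$ carefully. The third derivatives of $g$ are: $\partial_a^3 g=-1/a^2$, $\partial_a\partial_b^2 g=1/b^2$, $\partial_b^3 g=-2a/b^3$, and $\partial_a^2\partial_b g=0$. With $a,b\in[\frac12,\frac32]$, these give $1/a^2\leq4$, $1/b^2\leq 4$ and $2a/b^3\leq 24$. A cruder route is to expand $\log(1+\delta v)$ and $(1+\delta u)\log(1+\delta u)$ in one variable with Lagrange remainders; the cubic remainders are then bounded by $c|\delta|^3\|f\|_\infty^3$ with an explicit $c$.

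The sum over $z,z'$ with weights $\zeta(z)|M(z,z')|$ is handled as follows. If $\|M\|_\infty$ denotes the maximal row sum, then $\sum_{z'}|M(z,z')|\leq\|M\|_\infty$ and $\sum_z\zeta(z)\leq1$, which would even avoid the factor $|\Z|$. If $\|M\|_\infty$ is instead the max entry, the $|\Z|$ factor absorbs the $z'$ sum. I expect to have to tune the constant; if the direct third-derivative bound exceeds $3$, I would use the sharper one-variable remainder estimates to reach the stated constant.
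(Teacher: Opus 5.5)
Items 1--3 of your plan match the paper's proof: termwise continuity, the irreducibility/path argument for the equality cases, and convexity via squares of linear maps and the perspective of $x\mapsto x-1-\log x$. In item 4, your Hessian computation correctly produces the leading term $\delta^2\calE_\zeta(f)$; this is an equivalent repackaging of the paper's second-order expansion of $\log(1+x)$.

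\textbf{The gap: the constant $3$.} Your plan does not deliver the explicit constant in $|R|\le 3\|M\|_\infty|\Z|\|f\|_\infty^3$, and that constant is part of the statement. Write $F=\|f\|_\infty$. Your own third-derivative bounds, used in Lagrange form, give only $|r|\le\frac16(4+3\cdot 4+24)F^3=\frac{20}{3}F^3$ per edge. As you note, the weights $M(z,z')\zeta(z)$, $z\neq z'$, total $\sum_z\zeta(z)|M(z,z)|\le\|M\|_\infty$, so the factor $|\Z|$ is slack. That rescues the bound for $|\Z|\ge 3$, but not for $|\Z|=2$. The fallback you name makes things worse, not better. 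For $x\in[-\frac12,0]$ the one-variable Lagrange remainder of $\log(1+x)$ is $\frac{|x|^3}{3(1+\xi)^3}$, which is bounded only by $\frac83|x|^3$; this leads to $|r|\le\frac{17}{2}F^3$. Both losses come from evaluating a third derivative uniformly at the worst point $b=\frac12$, where $b^{-3}=8$.

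\textbf{How the paper gets $3$.} It keeps the full power series of the logarithm and sums the tail geometrically. Set $\rho(x)\coloneqq\log(1+x)-x+\frac{x^2}{2}$. For $|x|\le\frac12$,
\[
|\rho(x)|\le\sum_{k\ge3}\frac{|x|^k}{k}\le\frac{|x|^3}{3(1-|x|)}\le\frac23|x|^3 .
\]
Your $g$ then splits exactly as
\[
g(1+\delta u,1+\delta v)=\frac{\delta^2}{2}(v-u)^2+\frac{\delta^3}{2}\,u\,(v^2-u^2)+(1+\delta u)\bigl[\rho(\delta u)-\rho(\delta v)\bigr],
\]
so $|r|\le\frac12F^3+\frac32\cdot 2\cdot\frac23F^3=\frac52F^3$. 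Summing against the weights gives the stated bound for every $|\Z|$.

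\textbf{Alternative fix within your approach.} You can keep the two-variable expansion and use the integral form of the remainder instead. Set $\phi(s)\coloneqq g(1+su,1+sv)$, so that $\delta^3 r=\int_0^\delta\frac{(\delta-s)^2}{2}\phi'''(s)\dd s$. Your derivative formulas give $|\phi'''(s)|\le\frac{6-2|s|F}{(1-|s|F)^3}F^3$. The weight $(\delta-s)^2$ is concentrated where $a,b$ are close to $1$, and the result is $|r|\le(8\log 2-4)F^3\approx 1.55\,F^3$.

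Only the $\delta$-independence of the remainder bound is used later, in Proposition~\ref{prop:gPI-char}. So your argument covers everything downstream, but as written it does not prove the constant claimed.
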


\begin{proof}
    The continuity of the generalised Dirichlet form and the generalised Fisher information follows directly from their respective definitions, see \eqref{def:genDirichlet} and \eqref{def:genFisher}, respectively.

    Next we show the non-negativity and convexity of $\calE_{\zeta}$. The non-negativity of $\calE_\zeta(\cdot)$ follows from the definition~\eqref{def:genDirichlet} since $M(z,z')\geq 0$ for $z\neq z'$ and the term $M(z,z)$ does not contribute to the summation. Additionally, we clearly have $\calE_\zeta(c\1) = 0$. On the other hand, if $\calE_{\zeta}(f) = 0$, then $M(z,z')(f(z) - f(z'))^2 = 0$ for every $z \neq z'$. Since $M$ is irreducible, there is a path $({z}_j)_{j=1}^J \subset \Z$ between every $z$ and $z'$ such that $M({z}_j,{z}_{j+1}) > 0$. This yields that $f$ is constant along this path. Since $z,z'$ were arbitrary, we conclude that $f$ is constant on $\Z$. Finally, the convexity of $\calE_\zeta$ follows directly from the convexity of $f \mapsto (f(z) - f(z'))^2$, which can be written as a composition of $x \mapsto x^2$ with a linear map.

    Next, we prove the non-negativity and convexity for $\RF_{\zeta}$. Let $\Phi(x) \coloneqq x - 1 - \log x$ for $x > 0$. Note that $\Phi \geq 0$ is strictly convex and that $\Phi(1) = 0$. Then, the non-negativity of $\RF_{\zeta}$  follows from $M(z,z')\geq 0$. Similarly to $\calE_\zeta$, using that $\zeta, \varphi > 0$ and that $\Phi(x) = 0$ only at $x = 1$, we obtain that $\RF_{\zeta}$ vanishes if and only if $\tfrac{\varphi(z')}{\varphi(z)} = 1$ for all $z \neq z'$, which together with $\varphi \in \cD_\zeta$ shows $\varphi = \1$, see also \cite[Lemma~2.5]{HilderPeletierSharmaTse20}. To show convexity, we observe that it is sufficient to show convexity of $\varphi \mapsto \varphi(z) \Phi(\tfrac{\varphi(z')}{\varphi(z)})$. This either follows from direct calculation using that $(x,y) \mapsto - x \log(y/x)$ is convex on $\R^2_{>0}$, see also \cite[Remark~2.12]{HilderPeletierSharmaTse20}, or using an abstract result on perspective functions, see e.g.~\cite{Combettes18}.
   
    Now we prove the final part. Since $\| \delta f \|_\infty \leq \frac12 < 1$ and $\mathbb E_\zeta[f] = 0$, we have $\varphi \coloneqq \1+\delta f \in\cD_\zeta(\Z)$, using which 
    \begin{multline*}
        \RF_\zeta(\1+\delta f) = \sum_{\substack{ z,z'\in\Z \\ z \neq z'}} M(z,z')\zeta(z) \bigl[ \bigl(1+\delta f(z')\bigr) - \bigl(1+\delta f(z)\bigr) \\
        - \bigl(1+\delta f(z)\bigr) \log\bigl(1+\delta f(z')\bigr)
        + \bigl(1+\delta f(z)\bigr)\log\bigl(1+\delta f(z)\bigr)\bigr]. 
    \end{multline*}
    Taylor expanding $\log(1+x) = x - \frac12 x^2 - \sum_{k=3}^\infty \frac1k (-x)^k$ leads to  
    \begin{align*}
        \RF_\zeta(\1+\delta f) &=  \frac{\delta^2}{2} \sum_{\substack{ z,z'\in\Z \\ z \neq z'}} M(z,z') \zeta(z) \bigl(f(z)-f(z')\bigr)^2+\mathcal \delta^3 R 
        = \delta^2 \bigl(\calE_\zeta (f) + \delta R \bigr),
    \end{align*} 
    where we estimate the remainder term 
\begin{multline*}
    R 
    \coloneqq \sum_{z,z'\in\Z} M(z,z') \frac{\zeta(z)}{\delta^3} \biggl[ 
        \frac{\delta^3}2 f(z) \bigl( f(z')^2 - f(z)^2 \bigr) 
        + \bigl(1+\delta f(z)\bigr) 
        \sum_{k=3}^\infty \frac{(-\delta)^k f(z')^k}k \\
        - \bigl(1+\delta f(z)\bigr) \sum_{k=3}^\infty \frac{(-\delta)^k f(z)^k}k \biggr]
\end{multline*}
as (with $F \coloneqq \|f\|_\infty$ and $|\delta| F \leq \frac12$)
\begin{align*}
    |R|
    &\leq \bigg| \sum_{\substack{ z,z'\in\Z \\ z \neq z'}} M(z,z') \zeta(z) \bigg| \biggl(F^3 + \frac2{|\delta|^3} (1 + |\delta| F) \sum_{k=3}^\infty \frac{(|\delta| F)^k}k \biggr) \\
    &\leq \|M\|_\infty |\Z| F^3 \biggl(1 + 3 \sum_{\ell=0}^\infty \frac{(|\delta| F)^\ell}{\ell + 3} \biggr)
    \leq \|M\|_\infty |\Z| F^3 \biggl(1 + \sum_{k=0}^\infty \frac1{2^k} \biggr).
\end{align*}
This shows that, indeed, $R$ is uniformly bounded in $\delta$ and that the asserted estimate holds.
\end{proof}

Regarding \eqref{eq:genRF-lin}, a similar relation holds between $\RelEnt_\zeta$ and $\Var_\zeta$. Indeed, under the same assumptions as for \eqref{eq:genRF-lin}, we have
 \begin{equation} \label{eq:genRE-lin}
     \RelEnt_\zeta(\1+\delta f) = \delta^2 \Big( \frac{1}{2}\Var_\zeta(f) + \delta \tilde{R} \Big), 
 \end{equation}
where the remainder term $\tilde{R}$ satisfies $|\tilde{R}| \leq \frac13 \|f\|_\infty^3$. To see this, a simplified version of the proof of \eqref{eq:genRF-lin} reveals that
\begin{align*}
    \RelEnt_\zeta(\1+\delta f)
    = \sum_{z\in\Z} (\1+\delta f(z)) \bigg( \delta f(z) - \frac12 (\delta f(z))^2 - \sum_{k=3}^\infty \frac{(-\delta f(z))^k}k \bigg) \zeta(z)
    = \delta \mathbb E_\zeta[f] + \frac12 \delta^2 E_\zeta[f^2] + \delta^3 \tilde{R},
\end{align*}
where now
\begin{align*}
  \tilde{R} &= \sum_{z\in\Z}  \frac{\zeta(z)}{\delta^3} \biggl[ 
        - \frac{\delta^3 f(z)^3}2
        - \bigl(1+\delta f(z)\bigr) 
        \sum_{k=3}^\infty \frac{(-\delta)^k f(z)^k}k \biggr] \\
   &= \sum_{z\in\Z}  \frac{\zeta(z)}{\delta^3} \biggl[ 
        - \sum_{k=3}^\infty \frac{(-\delta f(z))^k}k
        + \sum_{k=2}^\infty \frac{(-\delta f(z))^{k+1}}k \biggr]
   = \sum_{z\in\Z}  \frac{\zeta(z)}{\delta^3} \sum_{k=3}^\infty \frac{(-\delta f(z))^k}{k(k-1)}.
\end{align*}
Then, applying $|f(z)| \leq \|f\|_\infty$, noting that $\sum_{z\in\Z} \zeta(z) = 1$, using $|\delta| \|f\|_\infty \leq \frac12$, and $k(k-1)\geq 6$ for $k\geq 3$ the asserted estimate on $|\tilde{R}|$ follows.

The following remark discusses the connection between the generalised Fisher information introduced above and yet another generalisation of the Fisher information introduced by some of the authors in~\cite{HilderPeletierSharmaTse20}. 
\begin{rem}\label{rem:alternative-generalistion}
    In~\cite[Definition~1.5]{HilderPeletierSharmaTse20}, the authors introduce a different generalisation of the Fisher information inspired by large deviations of independent copies of Markov chains. In the language of this article, for any $\lambda\in (0,1)$ and $f\in\R^{|\Z|}_{>0}$, this version of the Fisher information reads (note the difference in domain of definition compared to~\eqref{def:genFisher})
    \begin{equation*}
        \RF^\lambda_\zeta(f) \coloneqq \sum_{z,z'\in\Z} M(z,z') \zeta(z) f(z) \biggl[ \frac{f(z')}{f(z)} -1 - \frac{1}{\lambda}\biggl(\bigg( \frac{f(z')}{f(z)} \biggr)^\lambda- 1\biggr) \biggr]. 
    \end{equation*}
    Noting that
    \begin{equation*}
        \lim_{\lambda\to 0} \bigl[ x - 1 -\frac{1}{\lambda}(x^\lambda-1)\bigr] = x -1 - \lim_{\lambda\to 0} \biggl(\frac{e^{\lambda\log x}-1}{\lambda}\biggr) = x -1 -\log x
    \end{equation*}
    we obtain $\RF^\lambda_\zeta(f) \to \RF_\zeta(f)$ as $\lambda\to 0$, where the limit is defined in~\eqref{def:genFisher}. In particular, the results in this paper can be appropriately generalised if $\RF_\zeta$ is replaced by $\RF^\lambda_\zeta$ using that $x \mapsto x - 1 - \tfrac{1}{\lambda}(x^\lambda - 1)$ is convex and
    \begin{equation*}
        \RF^\lambda_\zeta(1 + \delta f) = \delta^2 ((1-\lambda)\calE_\zeta(f) + \mathcal{O}(\delta)),
    \end{equation*}
    which follows from a Taylor expansion of $x \mapsto x - 1 - \tfrac{1}{\lambda}(x^\lambda - 1)$.
\end{rem}

\subsection{Generalised PI and LSI inequalities}
With the generalised objects defined above, we are now ready to introduce the corresponding \emph{(non-equilibrium)} generalised functional inequalities. 
\begin{defi}
 Let $M\in \R^{|\Z|\times |\Z|}$ be an irreducible generator. A strictly positive probability measure $\zeta\in \P_+(\Z)$ satisfies the
\begin{itemize}
\item \emph{generalised Poincar\'e inequality (gPI)} if there exists a constant $\alpha > 0$ such that
\begin{equation}\label{def:genPI}
\forall f\in \R^{|\Z|}  : \ \ \Var_\zeta(f)\leq \frac{1}{\alpha} \calE_\zeta(f);
\end{equation}
\item \emph{generalised log-Sobolev inequality (gLSI)} if there exists a constant $\alpha > 0$ such that 
\begin{equation}\label{def:genLSI}
\forall \varphi\in \cD_\zeta(\Z): \ \ \RelEnt_\zeta(\varphi)\leq  \frac{1}{\alpha} \RF_\zeta(\varphi).
\end{equation}
where $\cD_\zeta(\Z)$ is the space of densities with respect to $\zeta$ (defined in~\eqref{def:SpaceDen}).
\end{itemize}
The suprema over all possible choices of $\alpha$ are called respectively the \emph{generalised Poincar\'e} and the \emph{generalised log-Sobolev constants}. We denote them by $\alpha_{\gPI}(\cdot, M), \alpha_{\gLSI}(\cdot, M) :\P_+(\Z)\to [0,\infty)$ and they are given by
\begin{align}\label{def:PI-LSI-con}
    \alpha_{\gPI}(\zeta, M) \coloneqq \inf_{\substack{ f\in\R^{|\Z|} \\ f\notin \langle\1\rangle }} \frac{\calE_\zeta(f, M)}{\Var_\zeta(f)}, \quad\text{and}\quad \alpha_{\gLSI}(\zeta, M) \coloneqq \inf_{\substack{\varphi\in\cD_\zeta(\Z) \\ \varphi\neq \1}} \frac{\RF_\zeta(\varphi, M)}{\RelEnt_\zeta(\varphi)}.
\end{align}
\end{defi}

As in the case of the classical variants, we write $\alpha_{\gPI}(\zeta),\alpha_{\gLSI}(\zeta)$ instead of $\alpha_{\gPI}(\zeta,M),\alpha_{\gLSI}(\zeta,M)$ if there is no risk of confusion about the underlying generator. Since we are particularly interested in the dependence on $\zeta$, we keep it in the notation.

Note that \eqref{def:genPI} and \eqref{def:genLSI} hold trivially for all $\zeta \in \P_+(\Z)$, all $f = c\1  \in \langle\1\rangle$ and $\varphi=\1$ respectively, since $\Var_\zeta(c\1)=\calE_\zeta(c\1)=0$ and $\RelEnt_\zeta(\1)=\RF_\zeta(\1)=0$. Hence, removing them in \eqref{def:PI-LSI-con} does not change the fact that $\alpha_{\gPI}(\zeta)$ and $\alpha_{\gLSI}(\zeta)$ are respectively the suprema over all constants $\alpha$ in \eqref{def:genPI} and \eqref{def:genLSI}.

We now prove the continuity of $\zeta \mapsto \alpha_{\gPI}(\zeta)$ as well as equivalent characterisations of the generalised PI constant (see Proposition \ref{prop:gPI-char} below), which will be useful in the forthcoming analysis. In the first two characterisations, the space of test functions is reduced. The first characterisation simplifies the fraction, and the second characterisation removes the dependence on $\zeta$ from the class over which $f$ is minimised. From these characterisations, it is relatively easy to prove continuity of $\zeta \mapsto \alpha_{\gPI}(\zeta)$.
The final, third characterisation draws a bridge to the formula of the generalised LSI constant thereby establishing a useful connection between the gPI and the gLSI.
The proof of this exploits the continuity of $\alpha_{\gPI}$ and that the generalised Dirichlet form and variance appear as the leading-order term of respectively the generalised Fisher information and the relative entropy close to $\varphi = \1$, see \eqref{eq:genRF-lin} and \eqref{eq:genRE-lin}. The result makes use of notation introduced in Section~\ref{sec:notation}.

\begin{prop}\label{prop:gPI-char}
    The gPI constant $\alpha_{\gPI}$ defined in~\eqref{def:PI-LSI-con} is continuous on $\P_+(\Z)$. Moreover, we have the following alternative characterisations for any $\zeta \in \P_+(\Z)$
    \begin{align}\label{eq:gPI-char}
        \alpha_{\gPI}(\zeta) 
        = \inf_{\substack{f\in \langle \1 \rangle^{\perp_\zeta} \\ \|f\|_\zeta=1}}\calE_\zeta(f)
        = \inf_{\substack{f\in \langle \1 \rangle^\perp \\ \|f\|=1}} 
        \frac{\calE_\zeta(f)}{\Var_\zeta(f)}  
        .
     \end{align}
    In addition, for any $\P_+(\Z) \ni \zeta_\e \to \zeta$ and any $0 < \delta_\e \to 0$ as $\e \to 0$, we have
    \begin{align}\label{eq:gPI-char:eps}
        \alpha_{\gPI}(\zeta) 
        = 
        \frac12 \lim_{\e \to 0} \inf_{\substack{\varphi \in \cD_{\zeta_\e}(\Z) \\ 0 < \| \varphi - \1 \|_{\zeta_\e} < \delta_\e }}
\frac{\RF_{\zeta_\e}(\varphi)}{\RelEnt_{\zeta_\e}(\varphi)}.
     \end{align}
\end{prop}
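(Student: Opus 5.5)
The proof will rest on one observation: both $\calE_\zeta$ and $\Var_\zeta$ are invariant under adding constants, $\calE_\zeta(f+c\1)=\calE_\zeta(f)$ and $\Var_\zeta(f+c\1)=\Var_\zeta(f)$. Both are also $2$-homogeneous. The order will be: the two characterisations in \eqref{eq:gPI-char} first, then continuity, then \eqref{eq:gPI-char:eps}.

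\emph{First characterisation.} By \eqref{eq:Rz-directSum}, write any $f\notin\langle\1\rangle$ as $f=c\1+h$ with $h\in\langle\1\rangle^{\perp_\zeta}\setminus\{0\}$. Then the ratio in \eqref{def:PI-LSI-con} equals $\calE_\zeta(h)/\Var_\zeta(h)$. Since $\E_\zeta[h]=0$, we have $\Var_\zeta(h)=\|h\|_\zeta^2$, and homogeneity lets us normalise to $\|h\|_\zeta=1$.

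\emph{Second characterisation.} Decompose instead with respect to the flat geometry, $f=c\1+g$ with $g\in\langle\1\rangle^\perp\setminus\{0\}$, and normalise to $\|g\|=1$. Neither step changes the ratio.

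\emph{Continuity.} The constraint set $S=\{f\in\langle\1\rangle^\perp:\|f\|=1\}$ is compact and independent of $\zeta$. On $S$, $\Var_\zeta(f)>0$ because $f$ is nonconstant, and $(\zeta,f)\mapsto\Var_\zeta(f)$ is continuous. Hence $(\zeta,f)\mapsto\calE_\zeta(f)/\Var_\zeta(f)$ is jointly continuous on $\P_+(\Z)\times S$, using Proposition~\ref{prop:GenDirFI-prop}(1). An infimum over a fixed compact set of a jointly continuous function is continuous in the parameter: it is the standard uniform-continuity argument on a compact neighbourhood of $\zeta$ times $S$.

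\emph{Characterisation \eqref{eq:gPI-char:eps}.} Parametrise $\varphi\in\cD_{\zeta_\e}(\Z)$ with $0<\|\varphi-\1\|_{\zeta_\e}<\delta_\e$ as $\varphi=\1+\delta f$, where $f\in\langle\1\rangle^{\perp_{\zeta_\e}}$, $\|f\|_{\zeta_\e}=1$ and $0<\delta<\delta_\e$. Positivity is automatic for small $\delta$. Since $\|f\|_\infty\le (\zeta_\e)_*^{-1/2}$, and this is uniformly bounded because $\zeta_\e\to\zeta\in\P_+(\Z)$, the condition $|\delta|\le\frac12\|f\|_\infty^{-1}$ holds for $\e$ small. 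Then \eqref{eq:genRF-lin} and \eqref{eq:genRE-lin} give
\[
\frac{\RF_{\zeta_\e}(\1+\delta f)}{\RelEnt_{\zeta_\e}(\1+\delta f)}=\frac{\calE_{\zeta_\e}(f)+\delta R}{\frac12+\delta\tilde R},
\]
with $|R|$ and $|\tilde R|$ bounded uniformly in $\e$, $f$ and $\delta$. Moreover $\calE_{\zeta_\e}(f)$ is bounded uniformly, by $2\|M\|_\infty|\Z|\|f\|_\infty^2$. So the ratio equals $2\calE_{\zeta_\e}(f)+O(\delta_\e)$ uniformly. Taking the infimum and applying the first characterisation at $\zeta_\e$, the inner infimum is $2\alpha_{\gPI}(\zeta_\e)+O(\delta_\e)$. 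Continuity of $\alpha_{\gPI}$ then yields the limit $2\alpha_{\gPI}(\zeta)$.

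The main obstacle is the bookkeeping in this last step. One must check that the constraint $\|\varphi-\1\|_{\zeta_\e}<\delta_\e$ corresponds exactly to $\delta<\delta_\e$ when $\|f\|_{\zeta_\e}=1$, which holds by homogeneity of the norm. One must also check that every error bound is uniform in $\e$; this is where the lower bound $(\zeta_\e)_*\ge\zeta_*/2$ for small $\e$ is essential.
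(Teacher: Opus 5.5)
Your proposal is correct and follows essentially the same route as the paper: the two decompositions (relative to $\langle\1\rangle^{\perp_\zeta}$ and to $\langle\1\rangle^{\perp}$) for \eqref{eq:gPI-char}, continuity via the $\zeta$-independent compact constraint set of the flat characterisation, and \eqref{eq:gPI-char:eps} via the expansions \eqref{eq:genRF-lin} and \eqref{eq:genRE-lin}, with remainders controlled uniformly through $\|f\|_\infty^2 \le 2/\zeta_*$, combined with continuity of $\alpha_{\gPI}$. The only cosmetic difference is that you obtain both directions of \eqref{eq:gPI-char:eps} at once, from the exact correspondence $\varphi = \1 + \delta f$ and a uniform two-sided $O(\delta_\e)$ error, whereas the paper proves the $\liminf$ and $\limsup$ bounds separately.
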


\begin{proof}
    Since $\R^{|\Z|}=\langle\1\rangle\oplus\langle\1\rangle^{\perp_\zeta}$, any $f\notin \langle\1\rangle$ can be decomposed into $f=c\1+g$  where $c\in\R$ and $g\in \langle\1\rangle^{\perp_\zeta}\backslash\{\mathbf{0}\}$; in particular $\E_\zeta[g]=0$. From this we claim that
    \begin{align*} %\label{eq:equiv-char-1}
        \alpha_{\gPI}(\zeta) = \inf_{g\in \langle\1\rangle^{\perp_\zeta}\backslash\{\mathbf{0}\}} \frac{\calE_\zeta(g)}{\Var_\zeta(g)} = \inf_{g\in \langle\1\rangle^{\perp_\zeta}\backslash\{\mathbf{0}\}} \calE_\zeta\biggl(\frac{g}{\|g\|_\zeta}\biggr)=\inf_{\substack{f\in \langle \1 \rangle^{\perp_\zeta} \\ \|f\|_\zeta=1}}\calE_\zeta(f).
    \end{align*}
    Indeed, the first equality follows by using $f=c\1+g$ as above, $\calE_\zeta(c\1+g)=\calE_\zeta(g)$ and $\Var_\zeta(c\1+g)=\Var_\zeta(g)$.  The second equality follows since $\calE_\zeta( \beta g)= \beta^2\calE_\zeta(g)$ for any $ \beta \in \R$ and $\Var_\zeta(g)=\E_\zeta[g^2]=\|g\|^2_\zeta$.   

    The second equality in~\eqref{eq:gPI-char} follows by repeating the same arguments as above using the alternative decomposition $\R^{|\Z|}=\langle\1\rangle\oplus\langle\1\rangle^{\perp}$ and noting that $\Var_\zeta( \beta g) = \beta^2 \Var_\zeta(g)$. 

    We now prove the continuity of $\alpha_{\gPI}$. Using the second alternative characterisation we write 
    \begin{align*}
        \alpha_{\gPI}(\zeta) = \inf_{\substack{f\in\langle\1\rangle^{\perp}\\\|f\|=1}} F(\zeta,f),
        \qquad
        F(\zeta,f)\coloneqq \frac{\calE_\zeta(f)}{\Var_\zeta(f)},
    \end{align*}
    where the domain of $F(\cdot,\cdot)$ is given by
    \begin{align*}
        \mathrm{Dom}(F) = \cup_{\tau>0} K_\tau, \ \ K_\tau \coloneqq \bigl\{\zeta\in \P_+(\Z): \zeta(z)\geq \tau, \ \forall z\in \Z\bigr\}  \times \bigl\{ f\in\langle\1\rangle^{\perp}:\|f\|=1 \bigr\}.
    \end{align*}
    Note that $F\in C(K_\tau)$ for any $\tau>0$ since $K_\tau$ is a compact set,  $\calE_\zeta(f)$ and $\Var_\zeta(f)$ are continuous on $K_\tau$ (the former follows from Proposition~\ref{prop:GenDirFI-prop}), and 
    for any $f\in \langle\1\rangle^{\perp}$ 
     \begin{align*}
        \Var_\zeta(f)
        = \mathbb E_\zeta \big[ ( f - \E_\zeta [f] )^2 \big]
        \geq \tau \| f - \E_\zeta [f] \|^2
        = \tau \big( \| f \|^2 + \| \E_\zeta [f] \|^2 \big)
        \geq \tau \|f\|^2 
        =\tau
        >0,
    \end{align*}
    where the second equality follows since $f \in \langle\1\rangle^{\perp}$ and $\E_\zeta [f] \in \langle\1\rangle$.     
    Therefore $\zeta\mapsto\alpha_{\gPI}(\zeta)$ is continuous on $\{\zeta\in\P(\Z):\zeta(z)\geq \tau,\, \forall z\in Z\}$. Since $\tau>0$ is arbitrary it follows that $\zeta\mapsto\alpha_{\gPI}(\zeta)$ is continuous on $\P_+(\Z)$.

   Finally, we prove \eqref{eq:gPI-char:eps}. 
   Let $\zeta, \zeta_\e, \delta_\e$ be given. We will prove that
   \begin{align} \label{pfzi}
       \liminf_{\e \to 0} \inf_{\substack{\varphi \in \cD_{\zeta_\e}(\Z) \\ 0 < \| \varphi - \1 \|_{\zeta_\e} < \delta_\e }}
\frac{\RF_{\zeta_\e}(\varphi)}{\RelEnt_{\zeta_\e}(\varphi)}
&\geq 2\alpha_{\gPI}(\zeta), \\\label{pfzh}
        \limsup_{\e \to 0} \inf_{\substack{\varphi \in \cD_{\zeta_\e}(\Z) \\ 0 < \| \varphi - \1 \|_{\zeta_\e} < \delta_\e }}
\frac{\RF_{\zeta_\e}(\varphi)}{\RelEnt_{\zeta_\e}(\varphi)}
&\leq 2\alpha_{\gPI}(\zeta),
   \end{align}
   which together establish \eqref{eq:gPI-char:eps}.
   
   We start with proving \eqref{pfzi}. Recall
   $
     \zeta_* = \min_{z \in \Z} \zeta(z) > 0
   $
   and take $\e$ small enough such that 
$(\zeta_\e)_* > \frac12 \zeta_*$ and $\delta_\e \leq \frac14 \sqrt{2 \zeta_*}$ (this upper bound is chosen a posteriori). Let $\varphi$ be any admissible function in \eqref{eq:gPI-char:eps} (note that $\varphi$ and the following objects depend on $\e$). Let 
    \begin{equation*}
      \delta \coloneqq \|\varphi-\1\|_{\zeta_\e} \in (0, \delta_\e), \ \ 
        f \coloneqq \frac{\varphi-\1}{\delta} \in \R^{|\Z|}.
    \end{equation*}
    By construction, $\varphi = \1 + \delta f$, $\E_{\zeta_\e}[f] = 0$ (i.e.\ $f \in \langle\1\rangle^{\perp_{\zeta_\e}}$) and $\Var_{\zeta_\e}(f) = \|f\|_{\zeta_\e}^2 = 1$, where the latter follows from the definition of $\delta$.     
    Using $\delta < \delta_\e \leq \frac14 \sqrt{2\zeta_*}$ and 
    \begin{equation} \label{pfzg}
  1 
  = \|f\|_{\zeta_\e}^2
  = \sum_{z \in \Z} f^2(z) {\zeta_\e}(z)
  \geq (\zeta_\e)_* \|f\|_\infty^2
  \geq \frac{\zeta_*}2 \|f\|_\infty^2,
\end{equation}
we obtain $\delta \|f\|_\infty < \frac12$. Hence, the expansions of the generalised Fisher information and relative entropy in Proposition~\ref{prop:GenDirFI-prop} and \eqref{eq:genRE-lin} apply. In particular, since $\|f\|_\infty^2 \leq \frac{2}{\zeta_*}$, the corresponding remainder terms in~\eqref{eq:genRF-lin},~\eqref{eq:genRE-lin} are bounded by some constants $C, C' > 0$ independent of $\e$, which is crucial for what follows. 
With this we obtain (recall that $\Var_{\zeta_\e}(f) = 1$)
    \begin{align} \label{pfzm}
       \frac{\RF_{\zeta_\e}(\varphi)}{\RelEnt_{\zeta_\e}(\varphi)} 
       = \frac{\RF_{\zeta_\e}(\1 + \delta f)}{\RelEnt_{\zeta_\e}(\1 + \delta f)}
       \geq \frac{2\calE_{\zeta_\e} (f ) - C \delta }{1 + C' \delta }
       \geq \frac{2\calE_{\zeta_\e} (f ) }{1 + C' \delta_\e } - C \delta_\e. 
    \end{align}    
Then, using that $\varphi$ is arbitrary, we obtain
\begin{align} \label{pfzn}
       \inf_{\substack{\varphi \in \cD_{\zeta_\e}(\Z) \\ 0 < \| \varphi - \1 \|_{\zeta_\e} < \delta_\e }}
\frac{\RF_{\zeta_\e}(\varphi)}{\RelEnt_{\zeta_\e}(\varphi)}
       \geq \frac2{1 + C' \delta_\e} \inf_{\substack{f\in \langle \1 \rangle^{\perp_{\zeta_\e}} \\ \|f\|_{\zeta_\e}=1}} \calE_{\zeta_\e} (f ) - C \delta_\e.
    \end{align}
    By the first characterisation of $\alpha_{\gPI}(\zeta)$ we recognise the minimization in the right-hand side as $\alpha_{\gPI}(\zeta_\e)$. Then, \eqref{pfzi} follows from $\delta_\e \to 0$ and the continuity of $\alpha_{\gPI}$.
    
    We now prove \eqref{pfzh}. While most of the proof is the same as that for \eqref{pfzi}, we are going to establish \eqref{pfzm} and \eqref{pfzn} in the reversed order with reversed inequalities. We choose $\e$ small enough such that $(\zeta_\e)_* > \frac12 \zeta_*$ and $\delta_\e \leq \delta_0$ for some $\e$-independent $\delta_0 > 0$ that we choose later by means of two upper bounds.
    Let $\delta \in (0, \delta_\e)$ and let $f$ be as in the infimum in \eqref{pfzn}, i.e.\ 
    $f\in \langle \1 \rangle^{\perp_{\zeta_\e}}$ and $\|f\|_{\zeta_\e}=1$. Take $\varphi \coloneqq \1+\delta f$, and note that $\varphi$ belongs to the class from the infimum in \eqref{pfzn}. Since \eqref{pfzg} still holds, we have for $\delta_0 \leq \frac14 \sqrt{2 \zeta_*}$ that Proposition~\ref{prop:GenDirFI-prop} and \eqref{eq:genRE-lin} apply with $f, \delta$. This yields the reverse of \eqref{pfzm} given by
    \begin{align*}
       \frac{\RF_{\zeta_\e}(\varphi)}{\RelEnt_{\zeta_\e}(\varphi)} 
       \leq \frac{2\calE_{\zeta_\e} (f ) }{1 - C' \delta_\e } + C \delta_\e
    \end{align*}
for some constant $C, C' > 0$ independent of $\e$, where we have assumed $\delta_0 \leq \frac12 (C')^{-1}$ to ensure that the denominator is positive. Moving the constants involving $\delta_\e$ to the left-hand side and taking the infimum over $f$ on both sides, we get
\begin{align*}
       (1 - C' \delta_\e) \Bigg( \inf_{\substack{\varphi \in \cD_{\zeta_\e}(\Z) \\ 0 < \|\varphi - \1\|_{\zeta_\e} < \delta_\e}} \frac{\RF_{\zeta_\e}(\varphi)}{\RelEnt_{\zeta_\e}(\varphi)} - C \delta_\e \Bigg)
       \leq 2 \inf_{\substack{f\in \langle \1 \rangle^{\perp_{\zeta_\e}} \\ \|f\|_{\zeta_\e}=1}} \calE_{\zeta_\e} (f )
       = 2 \alpha_{\gPI}(\zeta_\e).
\end{align*}
Taking the limsup over $\e$ yields \eqref{pfzh}.
\end{proof}

\section{Properties of the generalised PI and LSI constants}\label{sec:properties}
We now establish the main properties of the generalised constants $\alpha_{\gPI}(\zeta)$ and $\alpha_{\gLSI}(\zeta)$ defined in \eqref{def:PI-LSI-con}. We begin with two elementary observations: first, that $2 \alpha_{\gPI}(\zeta) \geq \alpha_{\gLSI}(\zeta)$; and second, that both the gPI and gLSI tensorise, meaning that a product Markov chain inherits the smallest gPI and gLSI constants among its components.
We then show that, in addition to $\zeta\mapsto \alpha_{\gPI}(\zeta)$, also $\zeta\mapsto \alpha_{\gLSI}(\zeta)$ is continuous on $\P_+(\Z)$. Finally, we derive positive lower bounds on $\alpha_{\gLSI}(\zeta)$, which then, by the inequality $2 \alpha_{\gPI}(\zeta) \geq \alpha_{\gLSI}(\zeta)$, also imply the same bounds for $\alpha_{\gPI}(\zeta)$.

\subsection{gLSI implies gPI and tensorisation}\label{sec:properties1}

The following result shows that $\alpha_{\gLSI}(\zeta) \leq 2 \alpha_{\gPI}(\zeta)$, which in particular implies that the gLSI implies the gPI. Note that this is also a property of the corresponding classical variants~\cite[Section~3]{bobkovTetali06}. 

\begin{prop}\label{prop:LSI-implies-Poincare}
    For any $\zeta\in\P_+(\Z)$, $\alpha_{\gLSI}(\zeta) \leq 2 \alpha_{\gPI}(\zeta)$. 
\end{prop}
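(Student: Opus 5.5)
The plan is to obtain the inequality directly from the characterisation \eqref{eq:gPI-char:eps} in Proposition~\ref{prop:gPI-char}, applied with the constant sequence $\zeta_\e \coloneqq \zeta$ and any $\delta_\e \to 0$, for instance $\delta_\e \coloneqq \e$. This is the linearisation argument of the classical case. The relation \eqref{eq:gPI-char:eps} expresses $2\alpha_{\gPI}(\zeta)$ as the limit of infima of the ratio $\RF_\zeta(\varphi)/\RelEnt_\zeta(\varphi)$. These infima are taken over the restricted classes
\[
  \{\varphi\in\cD_\zeta(\Z) : 0<\|\varphi-\1\|_\zeta<\delta_\e\}.
\]

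The key observation is that each restricted class is a subset of $\{\varphi\in\cD_\zeta(\Z):\varphi\neq\1\}$. The latter is exactly the class over which the infimum defining $\alpha_{\gLSI}(\zeta)$ in \eqref{def:PI-LSI-con} is taken. An infimum over a smaller set can only be larger, so for every $\e$,
\[
\alpha_{\gLSI}(\zeta) \leq \inf_{\substack{\varphi \in \cD_{\zeta}(\Z) \\ 0 < \| \varphi - \1 \|_{\zeta} < \delta_\e }} \frac{\RF_{\zeta}(\varphi)}{\RelEnt_{\zeta}(\varphi)}.
\]
I need the restricted classes to be nonempty, so that the infima are finite and the limit in \eqref{eq:gPI-char:eps} is meaningful. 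They are nonempty: $|\Z|\geq 2$, so $\langle\1\rangle^{\perp_\zeta}$ contains nonzero vectors $f$, and $\1+\delta f$ lies in the class for $\delta$ small. Passing to the limit $\e\to 0$ and applying \eqref{eq:gPI-char:eps} then gives $\alpha_{\gLSI}(\zeta)\leq 2\alpha_{\gPI}(\zeta)$.

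There is no real obstacle beyond the careful use of Proposition~\ref{prop:gPI-char}. One could also argue without it, which is the fallback if one prefers a self-contained argument. Take a minimiser $f\in\langle\1\rangle^{\perp_\zeta}$ with $\|f\|_\zeta=1$ in the first characterisation \eqref{eq:gPI-char}; it exists by compactness and continuity of $\calE_\zeta$. Then test the gLSI with $\varphi=\1+\delta f$. By \eqref{eq:genRF-lin} and \eqref{eq:genRE-lin},
\[
\frac{\RF_\zeta(\1+\delta f)}{\RelEnt_\zeta(\1+\delta f)} = \frac{\calE_\zeta(f)+\delta R}{\tfrac12+\delta\tilde R} \longrightarrow 2\calE_\zeta(f)=2\alpha_{\gPI}(\zeta) \quad\text{as } \delta\to0.
\]
Each of these ratios is bounded below by $\alpha_{\gLSI}(\zeta)$, which again yields the claim. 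The only detail to check in this route is that the remainders $R$ and $\tilde R$ stay bounded as $\delta\to0$ for fixed $f$, which is exactly what the estimates stated alongside \eqref{eq:genRF-lin} and \eqref{eq:genRE-lin} provide.
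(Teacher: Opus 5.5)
Your main argument is correct and is the paper's proof: take $\zeta_\e=\zeta$ in \eqref{eq:gPI-char:eps}, note that the restricted classes lie inside $\cD_\zeta(\Z)\setminus\{\1\}$, and compare with the definition \eqref{def:PI-LSI-con} of $\alpha_{\gLSI}(\zeta)$. Your fallback via $\varphi=\1+\delta f$ is also valid; it is the $\limsup$ half \eqref{pfzh} of the proof of Proposition~\ref{prop:gPI-char} specialised to fixed $\zeta$, which shows that only this direction of the characterisation is needed.
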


The proof is a direct consequence of the characterisation of $\alpha_{\gPI}$ in~\eqref{eq:gPI-char:eps}; in particular, taking $\zeta_\e = \zeta$, removing the condition $\|\varphi-\1\|_{\zeta_\e}<\delta_\e$ from~\eqref{eq:gPI-char} and using~\eqref{def:PI-LSI-con} we arrive at the required result. 

The following result presents the tensorisation property of the generalised PI and LSI inequalities which carries over from the classical counterparts~\cite[Lemma~3.2]{diaconisSaloffCoste96}. 
\begin{prop}\label{prop:tensor}
    Let $d \geq 1$. For each $i\in \{1,\ldots,d\}$ let $\Z_i$ be a finite state space, $M_i$ be an irreducible  generator on $\Z_i$ and $\zeta_i\in \P_+(\Z_i)$. Define 
    \begin{equation*}
    \displaystyle
        \Z \coloneqq\prod_{i=1}^d \Z_i, \quad
        \zeta \coloneqq\zeta_1\otimes\ldots\otimes \zeta_d \in \P_+(\Z), \quad
        M \coloneqq\frac1d\sum_{i=1}^d \underbrace{I\otimes \ldots I}_{i-1} \otimes M_i \otimes \underbrace{I\otimes\ldots\otimes I}_{d-i} \in \R^{|\Z| \times |\Z|}.
    \end{equation*}
    Then, $M$ is an irreducible generator on $\Z$ and $\zeta$ satisfies the gPI and gLSI with constants 
    \begin{equation*}
        \alpha_{\gPI}(\zeta, M) =\frac1d \min_{1 \leq i \leq d} \alpha_{\gPI}(\zeta_i, M_i), \ \ \alpha_{\gLSI}(\zeta, M) =\frac1d \min_{1 \leq i \leq d}\alpha_{\gLSI} (\zeta_i, M_i).
    \end{equation*}
\end{prop}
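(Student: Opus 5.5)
The plan is to reduce everything to a coordinatewise decomposition of the generalised functionals and then use the classical subadditivity of variance and entropy under product measures. Irreducibility of $M$ is immediate: off the diagonal, $M(z,z')$ is nonzero only if $z$ and $z'$ differ in exactly one coordinate $i$, in which case $M(z,z')=\frac1d M_i(z_i,z_i')$. Any two states can therefore be joined by changing one coordinate at a time, each step along an $M_i$-path, which exists because $M_i$ is irreducible.

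\emph{Step 1 (decomposition).} For $z\in\Z$ write $z=(z_i,z_{-i})$ and $\zeta_{-i}\coloneqq\bigotimes_{j\neq i}\zeta_j$. For $f\in\R^{|\Z|}$ let $f_{z_{-i}}\coloneqq f(\cdot,z_{-i})\in\R^{|\Z_i|}$. Only pairs differing in a single coordinate contribute to \eqref{def:genDirichlet} and \eqref{def:genFisher}. Using the structure of $M$ from above, I expect
\begin{equation*}
\calE_\zeta(f,M)=\frac1d\sum_{i=1}^d\sum_{z_{-i}}\zeta_{-i}(z_{-i})\,\calE_{\zeta_i}(f_{z_{-i}},M_i).
\end{equation*}
The analogous identity should hold for $\RF_\zeta(\varphi,M)$, with $\RF_{\zeta_i}$ evaluated on $\varphi_{z_{-i}}>0$. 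Here $\varphi_{z_{-i}}$ is generally not a $\zeta_i$-density. However, $\RF_{\zeta_i}$ is well defined and $1$-homogeneous on $\R^{|\Z_i|}_{>0}$, since \eqref{def:genFisher} is $\varphi(z)\Phi(\varphi(z')/\varphi(z))$-type. Hence, by the decomposition in Remark~\ref{rem:cenEnt}, the gLSI for $(\zeta_i,M_i)$ gives $\Ent_{\zeta_i}(g)\le\alpha_{\gLSI}(\zeta_i,M_i)^{-1}\RF_{\zeta_i}(g,M_i)$ for all $g\in\R^{|\Z_i|}_{>0}$. Likewise, the gPI gives $\Var_{\zeta_i}(g)\le\alpha_{\gPI}(\zeta_i,M_i)^{-1}\calE_{\zeta_i}(g,M_i)$.

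\emph{Step 2 (lower bounds).} I will invoke the standard tensorisation inequalities for product measures, namely Efron--Stein for the variance and subadditivity of entropy:
\begin{equation*}
\Var_\zeta(f)\le\sum_{i}\E_{\zeta_{-i}}\bigl[\Var_{\zeta_i}(f_{z_{-i}})\bigr],\qquad \RelEnt_\zeta(\varphi)=\Ent_\zeta(\varphi)\le\sum_i\E_{\zeta_{-i}}\bigl[\Ent_{\zeta_i}(\varphi_{z_{-i}})\bigr].
\end{equation*}
Both are properties of the product measure only, so they are independent of any generator or stationarity. Combining these with Step 1 and the one-dimensional inequalities yields
\begin{equation*}
\Var_\zeta(f)\le\Bigl(\tfrac1d\min_i\alpha_{\gPI}(\zeta_i,M_i)\Bigr)^{-1}\calE_\zeta(f),
\end{equation*}
and the same bound for the relative entropy in terms of $\RF_\zeta$. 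Thus $\alpha_{\gPI}(\zeta,M)\ge\frac1d\min_i\alpha_{\gPI}(\zeta_i,M_i)$ and $\alpha_{\gLSI}(\zeta,M)\ge\frac1d\min_i\alpha_{\gLSI}(\zeta_i,M_i)$.

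\emph{Step 3 (upper bounds).} Fix $i$ and take $f(z)=g(z_i)$, or $\varphi(z)=\psi(z_i)$ with $\psi\in\cD_{\zeta_i}(\Z_i)$. Then $\varphi\in\cD_\zeta(\Z)$. The $j\neq i$ terms in Step 1 vanish, so $\calE_\zeta(f)=\frac1d\calE_{\zeta_i}(g)$ and $\RF_\zeta(\varphi)=\frac1d\RF_{\zeta_i}(\psi)$. Meanwhile $\Var_\zeta(f)=\Var_{\zeta_i}(g)$ and $\RelEnt_\zeta(\varphi)=\RelEnt_{\zeta_i}(\psi)$. Taking the infimum over $g$ (resp.\ $\psi$) and then the minimum over $i$ gives the reverse inequalities.

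The main obstacle is the entropy subadditivity in Step 2. It is classical (e.g.\ via the variational formula for entropy, or induction on $d$ using the chain rule), so I would cite it rather than reprove it. The one point needing care is that it is applied to non-normalised conditional slices. This is exactly why the centred entropy $\Ent$ and the homogeneity of $\RF_{\zeta_i}$ are used in Step 1.
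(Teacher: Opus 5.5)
Your proof is correct. For the gPI the paper simply defers to the classical argument of Diaconis--Saloff-Coste, which is what you give: the coordinate decomposition of $\calE_\zeta$, Efron--Stein, and test functions $f(z)=g(z_i)$ for the reverse inequality.

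For the gLSI lower bound, however, you take a different route from the paper.

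\textbf{Your route.} You use subadditivity of entropy for the product measure $\zeta$. You apply each one-dimensional gLSI to \emph{every} slice $\varphi_{z_{-i}}$, extended to unnormalised functions via the centred entropy and the $1$-homogeneity of $\RF_{\zeta_i}$. You then sum using the exact coordinate decomposition of $\RF_\zeta$. The only convexity used is that of the entropy, and it sits inside the cited tensorisation lemma.

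\textbf{The paper's route.} The paper works with $d=2$ and the exact chain rule
$\RelEnt_\zeta(\varphi)=\E_{\zeta_2}\bigl(g\,\RelEnt_{\zeta_1}(\Psi_y)\bigr)+\RelEnt_{\zeta_2}(g)$,
with marginal $g=\E_{\zeta_1}\varphi^y$ and conditionals $\Psi_y=\varphi^y/g(y)$. It applies the gLSI of $M_1$ to the conditionals and the gLSI of $M_2$ to the \emph{marginal} $g$. It then pays for this with Jensen's inequality, using the joint convexity of $\Phi(r,s)=r(\tfrac sr-1-\log\tfrac sr)$, to obtain $\RF_{\zeta_2}(g)\le\E_{\zeta_1}\RF_{\zeta_2}(\varphi^x)$. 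So the convexity is placed on the Fisher-information side (Proposition~\ref{prop:GenDirFI-prop}(3)) rather than on the entropy side.

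\textbf{What each buys.} The paper's argument is self-contained but written only for $d=2$, with $d>2$ left to ``messier notation.'' Yours handles all $d$ at once and runs exactly parallel to the Poincar\'e case. It needs nothing about $\RF$ beyond its coordinate decomposition and homogeneity. The price is citing entropy tensorisation, which is itself proved by the same chain rule plus convexity of $\Ent$.
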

\begin{proof} 
The proof for the gPI inequality follows exactly as in the classical case, see~\cite[Lemma~3.2]{diaconisSaloffCoste96} for instance. 
    For gLSI we provide a proof for $d=2$, and note that the calculations easily generalise to $d>2$ (albeit with messier notation). In the following $z=(x,y)$ and $z'=(x',y')$.
    Using $\Phi(r,s)=r(\frac{s}{r}-1-\log\frac{s}{r})$ and $\varphi^x(\cdot)=\varphi(x,\cdot)$, $\varphi^y(\cdot)=\varphi(\cdot,y)$ for $\varphi \in \cD_\zeta(\Z)$ 
    \begin{align}
        \RF_\zeta(\varphi) &= \sum_{(x,y)\in\Z} \zeta_1(x)\zeta_2(y)\sum_{(x',y')\in\Z} M\big((x,y),(x',y')\big)\Phi\big(\varphi(x,y),\varphi(x',y')\big)\notag\\
        & = \frac12\sum_{y\in \Z_2} \zeta_2(y)\sum_{x\in\Z_1}\zeta_1(x)\sum_{x'\in\Z_1}  M_1(x,x')\Phi\big(\varphi(x,y),\varphi(x',y)\big) \notag\\
        &\qquad\qquad + \frac12\sum_{x\in \Z_1} \zeta_1(x)\sum_{y\in\Z_2}\zeta_2(y)\sum_{y'\in\Z_2}  M_2(y,y')\Phi\big(\varphi(x,y),\varphi(x,y')\big) \notag\\
        &= \frac12\E_{\zeta_2}\big(\RF_{\zeta_1}(\varphi^y)\big) +  \frac12\E_{\zeta_1}\big(\RF_{\zeta_2}(\varphi^x)\big). \label{eq:Ten-LSI-FI}
    \end{align}
    Here and in the rest of this proof we use the convention that $\E_{\zeta_1}$, $\RF_{\zeta_1}$ and $\RelEnt_{\zeta_1}$ only act on $x$-variable, while similar operations with respect to $\zeta_2$ only act on the $y$-variable.  

    For any $\varphi\in \cD_\zeta(\Z)$, we define the marginal density $g\in \cD_{\zeta_2}(\Z_2)$ and family of conditional densities $\Psi_y\in \cD_{\zeta_1}(\Z_1)$ for $y\in \Z_2$ as
    \begin{equation*}
        g(y) \coloneqq\E_{\zeta_1}(\varphi^y)=\sum_{x\in \Z_1}\zeta_1(x)\varphi(x,y) > 0, \ \ \Psi_y(x) \coloneqq\frac{\varphi(x,y)}{g(y)},
    \end{equation*}
    which leads to the decomposition 
    \begin{equation*}
        \forall \varphi\in \cD_\zeta(\Z): \ \ \varphi(x,y)=g(y)\Psi_y(x).
    \end{equation*}
    Therefore for any $\varphi\in \cD_\zeta(\Z)$ we find 
    \begin{align*}
        \RelEnt_\zeta(\varphi)= \sum_{y\in\Z_2}\zeta_2(y)\sum_{x\in\Z_1} \zeta_1(x)g(y)\Psi_y(x)\bigl[\log \Psi_y(x)+\log g(y)\bigr]=\E_{\zeta_2}\Big(g(\cdot)\RelEnt_{\zeta_1}(\Psi_{(\cdot)})\Big) +\RelEnt_{\zeta_2}(g) ,
    \end{align*}
    where the second equality follows since $\E_{\zeta_1}(\Psi_y)=1$. 
    Using the notation $\alpha_{\gLSI}^j \coloneqq \alpha_{\gLSI}(\zeta_j,M_j)$ for $j = 1,2$,
    we therefore find 
    \begin{align}
        \min\{\alpha^1_{\gLSI},\alpha^2_{\gLSI}\} \, \RelEnt_\zeta(\varphi)
        &\leq \alpha^1_{\gLSI} \, \E_{\zeta_2}\Big(g(\cdot)\RelEnt_{\zeta_1}(\Psi_{(\cdot)})\Big)+ \alpha^2_{\gLSI}\,\RelEnt_{\zeta_2}(g) \notag\\
        & \leq \E_{\zeta_2}\Big(g(\cdot)\RF_{\zeta_1}(\Psi_{(\cdot)})\Big)+
         \RF_{\zeta_2}(g)\notag \\
        &=\E_{\zeta_2}\big(\RF_{\zeta_1}(\varphi^y)\big)+
         \RF_{\zeta_2}(g), \label{eq:Ten-LSI-Ent}
    \end{align}
    where the final equality follows since the generalised Fisher information is $1$-homogenous, i.e. $\RF_{\zeta_1}(c\xi)=c\RF_{\zeta_1}(\xi)$ for any $c\in \R$ and $\xi\in \cD_{\zeta_1}(\Z_1)$.

    Since $(r,s)\mapsto \Phi(r,s)$ is convex for $r,s>0$, using Jensen's inequality along with $g(y)=\E_{\zeta_1}(\varphi^y)$, we have the bound
     \begin{align*}
        \RF_{\zeta_2}(g) 
        &= \sum_{y,y'\in\Z_2} \zeta_2(y)M_2(y,y') \Phi\big(\E_{\zeta_1}(\varphi^y),\E_{\zeta_1}(\varphi^{y'})\big) \\
        &\leq \sum_{y,y'\in\Z_2} \zeta_2(y)M_2(y,y') \E_{\zeta_1} \Phi \big(\varphi^y,\varphi^{y'}\big) \\
        & = \sum_{x \in \Z_1} \big(\RF_{\zeta_2}(\varphi^x)\big) \zeta_1(x) = \E_{\zeta_1}\big(\RF_{\zeta_2}(\varphi^x)\big).
    \end{align*}

    Substituting this bound into~\eqref{eq:Ten-LSI-Ent} and using~\eqref{eq:Ten-LSI-FI} we find
    \begin{align*}
        \min\{\alpha^1_{\gLSI},\alpha^2_{\gLSI}\}\,\RelEnt_{\zeta}(\varphi) \leq 2 \RF_\zeta(\varphi),
    \end{align*}
    which implies that $2\alpha_{\gLSI}\geq \min\{\alpha^1_{\gLSI},\alpha^2_{\gLSI}\}$. 
    
    To obtain the opposite inequality, we 
    repeat the arguments above with the simple choices $\varphi(x,y)=\xi(x)$ and $\varphi(x,y)=\eta(y)$. For the first choice, this yields $\RelEnt_\zeta(\varphi) = \RelEnt_{\zeta_1}(\xi)$ and $\RF_\zeta(\varphi) = \RF_{\zeta_1}(\xi)$, and thus $2\alpha_{\gLSI}(\zeta,M) \leq \alpha_{\gLSI}(\zeta_1, M_1)$. Similarly, for the second choice of $\varphi$ we get $2\alpha_{\gLSI}(\zeta,M) \leq \alpha_{\gLSI}(\zeta_2, M_2)$. 
\end{proof}

\subsection{Continuity of the generalised LSI constant}\label{sec:continuity}

With the continuity of the generalised PI constant already established in Proposition \ref{prop:gPI-char}, 
we now prove that $\alpha_{\gLSI}$ is continuous as well. The proof is based on the variational definition of  $\alpha_{\gLSI}(\zeta)$ as the infimum of $\RF_\zeta(\varphi) / \RelEnt_\zeta(\varphi)$ over $\varphi$, see \eqref{def:PI-LSI-con}. 
If $\RF_\zeta(\varphi) / \RelEnt_\zeta(\varphi)$ were to be continuous in $\varphi \in \cD_\zeta(\Z)$, then the proof would be much simpler. However, $\RF_\zeta(\varphi) / \RelEnt_\zeta(\varphi)$ may be discontinuous at $\varphi = \1$; see Example \ref{ex:disct} below. 
To obtain continuity of $\alpha_{\gLSI}$, we establish $\Gamma$-convergence of $\RF_\zeta(\varphi) / \RelEnt_\zeta(\varphi)$, see \eqref{eq:Gamma-convergence}, which implies the convergence of minima and thus the continuity of $\alpha_{\gLSI}$.

\begin{example}[Discontinuity of $\RF_\zeta(\varphi) / \RelEnt_\zeta(\varphi)$ at $\varphi = \1$]
    \label{ex:disct}
Let $\Z = \mathbb Z / (3 \mathbb Z) \cong \{0,1,2\}$. We treat $i \in \Z$ as indices. We consider the cyclic chain given by $M_{i,i+1} = 1$, $M_{ii} = -1$ and $0$ otherwise. In vector notation, let
\begin{align*}
    \zeta = \frac14 \begin{bmatrix}
        2 \\ 1 \\ 1
    \end{bmatrix}, \quad
    f^1 = \begin{bmatrix}
        0 \\ 1 \\ -1
    \end{bmatrix}, \quad
    f^2 = \begin{bmatrix}
        1 \\ -2 \\ 0
    \end{bmatrix}, \quad
    \varphi^k = \1 + \delta f^k \ \ (k = 1,2),
\end{align*}
where we will later pass $\delta \to 0$. By construction, $\mathbb E_\zeta [f^k] = 0$ for $k=1,2$. Then, for $\delta$ small enough, $\varphi^k \in \cD_\zeta(\Z)$, and the expansions in \eqref{eq:genRF-lin} and \eqref{eq:genRE-lin} apply and yield
\[
  \RF_\zeta(\varphi^k) = \delta^2 \calE_\zeta(f^k) + \cO (\delta^3), \quad
\RelEnt_\zeta(\varphi^k) = \frac{\delta^2 }{2}\Var_\zeta(f^k) + \cO (\delta^3). 
\]
Straightforward computations yield
\begin{gather*}
    \Var_\zeta(f^k)
    = \sum_{i=0}^2 (f_i^k)^2 \zeta_i
    = \left\{ \begin{aligned}
        &\frac12 && \text{if } k = 1 \\
        &\frac32 && \text{if } k = 2,
    \end{aligned} \right.
    \\
    \calE_\zeta(f^k)
    = \frac12 \sum_{i,j=0}^2 M_{ij} \zeta_i (f_i^k - f_j^k)^2
    = \frac12 \sum_{i=0}^2 \zeta_i (f_i^k - f_{i+1}^k)^2
    = \left\{ \begin{aligned}
        &\frac78 && \text{if } k = 1 \\
        &\frac{23}8 && \text{if } k = 2,
    \end{aligned} \right.
\end{gather*}
and therefore $\RF_\zeta(\varphi) / \RelEnt_\zeta(\varphi)$ is not continuous at $\varphi = \1$ since 
\[
  \lim_{\delta \to 0}
  \frac{ \RF_\zeta(\1 + \delta f^k) }{ \RelEnt_\zeta(\1 + \delta f^k) }
  = \frac{ 2 \calE_\zeta(f^k) }{ \Var_\zeta(f^k) }
  = \left\{ \begin{aligned}
        &\frac{21}{12} && \text{if } k = 1, \\
        &\frac{23}{12} && \text{if } k = 2.
    \end{aligned} \right.
\]
\end{example}

\begin{theorem}
    \label{thm:continuity-alpha-LSI}
    Let $M$ be a generator of an irreducible Markov chain on a finite state space $\Z$ and $\zeta\in \P_+(\Z)$, the set of positive probability measures on $\Z$. The generalised LSI constant $\alpha_{\gLSI}(\zeta)$, see \eqref{def:PI-LSI-con}, is continuous on $\P_+(\Z)$.
\end{theorem}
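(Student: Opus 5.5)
Fix $\zeta\in\P_+(\Z)$ and a sequence $\zeta_\e\to\zeta$ in $\P_+(\Z)$. We will show $\alpha_{\gLSI}(\zeta_\e)\to\alpha_{\gLSI}(\zeta)$ by proving a liminf and a limsup inequality, i.e.\ a $\Gamma$-convergence type argument for $G_\e(\varphi)\coloneqq\RF_{\zeta_\e}(\varphi)/\RelEnt_{\zeta_\e}(\varphi)$ on $\cD_{\zeta_\e}(\Z)\setminus\{\1\}$. The domains $\cD_{\zeta_\e}(\Z)$ move with $\e$. We transport between them by the rescaling $\varphi\mapsto \varphi/\E_{\zeta_\e}[\varphi]$, which maps $\cD_\zeta(\Z)$ onto $\cD_{\zeta_\e}(\Z)$ and is close to the identity on compact sets. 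By Remark~\ref{rem:cenEnt} (1-homogeneity of $\RF$, centred entropy) it does not change the quotient up to the change of reference measure.

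\textbf{Limsup.} Given $\eta>0$, pick $\varphi\in\cD_\zeta(\Z)$, $\varphi\neq\1$, with $G(\varphi)\le\alpha_{\gLSI}(\zeta)+\eta$. Set $\varphi_\e\coloneqq\varphi/\E_{\zeta_\e}[\varphi]\in\cD_{\zeta_\e}(\Z)$. Then $\varphi_\e\to\varphi\neq\1$, so $\RelEnt_{\zeta_\e}(\varphi_\e)\to\RelEnt_\zeta(\varphi)>0$. By Proposition~\ref{prop:GenDirFI-prop}(1), $\RF_{\zeta_\e}(\varphi_\e)\to\RF_\zeta(\varphi)$. Hence $\limsup_\e\alpha_{\gLSI}(\zeta_\e)\le\alpha_{\gLSI}(\zeta)+\eta$.

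\textbf{Liminf.} This is the main obstacle, because of the discontinuity at $\1$ shown in Example~\ref{ex:disct}. Take near-minimisers $\varphi_\e\in\cD_{\zeta_\e}(\Z)$ with $G_\e(\varphi_\e)\le\alpha_{\gLSI}(\zeta_\e)+\e$. By the limsup step, $\alpha_{\gLSI}(\zeta_\e)$ is bounded, and by Proposition~\ref{prop:LSI-implies-Poincare} and continuity of $\alpha_{\gPI}$ it is also bounded by $2\alpha_{\gPI}(\zeta_\e)$. The $\varphi_\e$ lie in a compact set, since $\varphi_\e\le 1/(\zeta_\e)_*\le 2/\zeta_*$. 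Pass to a subsequence realising the liminf with $\varphi_\e\to\bar\varphi$, where $\bar\varphi\ge0$ and $\E_\zeta[\bar\varphi]=1$. There are three cases.

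First, $\bar\varphi\in\cD_\zeta(\Z)$ with $\bar\varphi\ne\1$. By joint continuity (Proposition~\ref{prop:GenDirFI-prop}(1)) and continuity of $\RelEnt$, $G_\e(\varphi_\e)\to G(\bar\varphi)\ge\alpha_{\gLSI}(\zeta)$.

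Second, $\bar\varphi=\1$. Then $\delta_\e'\coloneqq\|\varphi_\e-\1\|_{\zeta_\e}\to0$, and we apply \eqref{eq:gPI-char:eps} with $\delta_\e\coloneqq 2\delta_\e'+\e$. This gives $\liminf G_\e(\varphi_\e)\ge 2\alpha_{\gPI}(\zeta)\ge\alpha_{\gLSI}(\zeta)$, using Proposition~\ref{prop:LSI-implies-Poincare}.

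Third, $\bar\varphi$ vanishes at some $z_0$ but $\bar\varphi(z_1)>0$ somewhere. By irreducibility there is an edge $z\to z'$ with $M(z,z')>0$, $\bar\varphi(z)>0$ and $\bar\varphi(z')=0$. The corresponding term in $\RF_{\zeta_\e}(\varphi_\e)$ contains $-\zeta_\e(z)\varphi_\e(z)\log(\varphi_\e(z')/\varphi_\e(z))\to+\infty$, while all other terms are nonnegative and $\RelEnt_{\zeta_\e}(\varphi_\e)\le\log(2/\zeta_*)$ stays bounded. So $G_\e(\varphi_\e)\to\infty$, which contradicts boundedness; this case cannot occur.

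Combining the three cases gives $\liminf_\e\alpha_{\gLSI}(\zeta_\e)\ge\alpha_{\gLSI}(\zeta)$. Together with the limsup step this proves continuity at $\zeta$. The only delicate points are the choice of $\delta_\e$ in the second case, so that \eqref{eq:gPI-char:eps} applies to $\varphi_\e$, and the uniform bounds on $\varphi_\e$.
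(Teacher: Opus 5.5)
Your proof is correct. It rests on the same three ingredients as the paper's proof:
\begin{itemize}
\item joint continuity of $\RF$ and $\RelEnt$ at densities $\varphi\neq\1$;
\item the characterisation \eqref{eq:gPI-char:eps} for sequences collapsing onto $\1$;
\item the inequality $\alpha_{\gLSI}(\zeta)\le 2\alpha_{\gPI}(\zeta)$.
\end{itemize}

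The organisation differs. The paper proves full $\Gamma$-convergence of its functionals $E_\e$ to a relaxed functional $E_0$ on $\R^{|\Z|}_{>0}$, with $E_0(\1)=2\alpha_{\gPI}(\zeta)$. This requires a recovery sequence at $\1$ as well, after which the paper cites the abstract convergence-of-infima theorem. You instead run the direct method on near-minimisers. Your limsup step therefore only needs a recovery sequence at a near-minimiser $\varphi\neq\1$ of the limit problem, and $\varphi/\E_{\zeta_\e}[\varphi]$ does the job.

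What your route buys is completeness on compactness. Convergence of infima from $\Gamma$-convergence requires equi-coercivity. Since $\R^{|\Z|}_{>0}$ is not compact, the paper's appeal to the abstract theorem tacitly assumes that near-minimisers stay in a compact subset of the open positive cone. Your argument supplies exactly this missing ingredient in two parts:
\begin{itemize}
\item the uniform bound $\varphi_\e\le 2/\zeta_*$;
\item your third case, where the $\RF_{\zeta_\e}$-term along an edge from $\{\bar\varphi>0\}$ to $\{\bar\varphi=0\}$ diverges while $\RelEnt_{\zeta_\e}(\varphi_\e)\le\log(2/\zeta_*)$ stays bounded.
\end{itemize}
In addition, your second case makes explicit the use of $2\alpha_{\gPI}(\zeta)\ge\alpha_{\gLSI}(\zeta)$. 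The paper uses this only implicitly, when it identifies $\inf E_0$ with $\alpha_{\gLSI}(\zeta)$.
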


\begin{proof}
    We show that for any $\zeta \in \P_+(\Z)$ and any sequence $\{\zeta_\varepsilon\}_{\eps>0} \in \P_+(\Z)$ with $\zeta_\varepsilon \rightarrow \zeta$ as $\varepsilon \rightarrow 0$ we have $\alpha_{\gLSI}(\zeta_\varepsilon) \rightarrow \alpha_{\gLSI}(\zeta)$ as $\varepsilon \rightarrow 0$. Let such  $\zeta, \zeta_\e$ be given. 
    We introduce the corresponding functionals $E_\varepsilon, E_0 : \R_{>0}^{|\Z|} \rightarrow \R$ as 
    \begin{align*}
        E_\varepsilon(\varphi) &\coloneqq \begin{dcases}
            \frac{\RF_{\zeta_\varepsilon}(\varphi)}{\RelEnt_{\zeta_\varepsilon}(\varphi)}, & \text{if } \varphi \in \mathcal{D}_{\zeta_\varepsilon}(\Z) \setminus \{\1\}, \\
            +\infty, & \text{otherwise,}
        \end{dcases}
        \\
        E_0(\varphi) &\coloneqq \begin{dcases}
            \frac{\RF_{\zeta}(\varphi)}{\RelEnt_{\zeta}(\varphi)}, & \text{if } \varphi \in \mathcal{D}_{\zeta}(\Z) \setminus \{\1\}, \\
            2\alpha_{\gPI}(\zeta), &
            \text{if } \varphi = \1, \\
            +\infty, & \text{otherwise}.
        \end{dcases}
    \end{align*}
    In the following, we will prove the stronger result 
    \begin{equation}\label{eq:Gamma-convergence}
    \operatorname*{\Gamma-lim}_{\varepsilon \rightarrow 0} E_\varepsilon = E_0,
    \end{equation}
    i.e.\ the functional $E_\eps$ $\Gamma$-converges 
    to the functional $E_0$ as $\eps\to 0$, which is equivalent to the following two conditions:
    \begin{enumerate}[label=($\Gamma$\arabic*)]
        \item For any $\varphi \in \R_{>0}^{|\Z|}$, and any sequence $\{\varphi_\varepsilon\}_{\eps>0} \in \R_{>0}^{|\Z|}$ which satisfies $\varphi_\varepsilon \rightarrow \varphi$ as $\varepsilon \rightarrow 0$ it holds
    \begin{equation}\label{eq:liminf-ineq}
        \liminf_{\varepsilon \rightarrow 0} E_\varepsilon(\varphi_\varepsilon) \geq E_0(\varphi).
    \end{equation}
    \item For any $\varphi \in \R_{>0}^{|\Z|}$ there exists a sequence $\{\varphi_\varepsilon\}_{\eps>0} \in \R_{>0}^{|\Z|}$ such that $\varphi_\varepsilon \rightarrow \varphi$ as $\varepsilon \rightarrow 0$ and it holds
    \begin{equation}\label{eq:limsup-ineq}
        \limsup_{\varepsilon \rightarrow 0} E_\varepsilon(\varphi_\varepsilon) \leq E_0(\varphi).
    \end{equation}
    \end{enumerate} 
    This $\Gamma$-convergence result along with the observation that $E_\e, E_0$ take finite values at some  
    $\varphi \in \R_{>0}^{|\Z|}$ ensure that the infima converge, i.e.\ $\alpha_{\gLSI}(\zeta_\varepsilon) \rightarrow \alpha_{\gLSI}(\zeta)$ as $\varepsilon \rightarrow 0$, see \cite{braides02},      
    which is the claimed result. \comm{[pf p.51 below]}

    We first point out that \eqref{eq:liminf-ineq},~\eqref{eq:limsup-ineq} hold if $\varphi \neq \1$.  This follows since $(\eta,\varphi)\mapsto \RelEnt_\eta(\varphi),\RF_\eta(\varphi)$ are continuous maps on $\P_+(\Z)\times \R^{|\Z|}_{>0}$ and $\RelEnt_{\eta}(\varphi) > 0$ for all $\varphi \in \cD_\eta(\Z) \setminus \{\1\}$, see Proposition~\ref{prop:GenDirFI-prop}.
    Consequently, for any $\varphi\in \cD_\zeta(\Z)\setminus\{\1\}$ and any sequence $\{\varphi^\eps\}_{\eps>0}\in \cD_{\zeta_\eps}(\Z)$ with $\varphi^\eps\to \varphi$, we have that $\| \varphi^\eps - \1 \| \geq \frac12 \| \varphi - \1 \| > 0$ for all $\eps>0$ small enough, and then
    \begin{align*}
        \lim_{\eps \to 0} E_\eps(\varphi_\varepsilon) = E_0(\varphi).
    \end{align*}
    Note that one example is given by $\varphi^\eps=\varphi \frac{\zeta}{\zeta_\eps}$, which proves the existence statement in~\eqref{eq:limsup-ineq}. 
    
    We now prove~\eqref{eq:liminf-ineq} for $\varphi=\1$. We denote the left-hand side in \eqref{eq:liminf-ineq} by $a \in [0,\infty]$. We may assume that $a < \infty$, as otherwise \eqref{eq:liminf-ineq} is trivially satisfied. Then, upon extracting a subsequence of $\e$ (not relabeled), we may assume that $\infty > E_\e(\varphi_\e) \to a$ as $\e \to 0$. Finiteness implies that $\{\varphi_\eps\}_{\eps>0}\subset \cD_{\zeta_\eps}(\Z)$ satisfies $\varphi_\eps\neq \1$ for all $\e$. We introduce $h_\eps = \varphi_\eps-\1$, and recall from the proof of Proposition \ref{prop:gPI-char} that $0 < \| h_\e \|_{\zeta_\e} \to 0$ as $\e \to 0$. Then, \eqref{eq:liminf-ineq} follows from \eqref{eq:gPI-char:eps} by
    \begin{align*}
        \lim_{\eps\to 0} E_\e(\varphi_\eps)
        =
        \lim_{\eps\to 0} \frac{\RF_{\zeta_\eps}(\varphi_\eps)}{\RelEnt_{\zeta_\eps}(\varphi_\eps)}
        \geq 
        \lim_{\eps\to 0} \inf_{\substack{\varphi \in \cD_{\zeta_\e}(\Z) \\ 0 < \| \varphi - \1 \|_{\zeta_\e} < \| h_\e \|_{\zeta_\e} }} \frac{\RF_{\zeta_\eps}(\varphi)}{\RelEnt_{\zeta_\e}(\varphi)}
        = 2\alpha_{\gPI}(\zeta) = E_0(\1).
    \end{align*}
    
    Next, we show  \eqref{eq:limsup-ineq} for $\varphi = \1$. We use Proposition \ref{prop:gPI-char} to characterise
    \begin{equation*}
        E_0(\1) = \lim_{\eps\to 0} \inf_{\substack{\varphi \in \cD_{\zeta_\e}(\Z) \\ 0 < \| \varphi - \1 \|_{\zeta_\e} < \e }} \frac{\RF_{\zeta_\eps}(\varphi)}{\RelEnt_{\zeta_\e}(\varphi)}.
    \end{equation*}
    Let $\varphi_\e$ be a minimising sequence for the sequence of minimisation problems in the right-hand side. From the definition of $E_\e$ we then obtain that $E_0(\1)= \lim_{\eps\to 0} E_\e(\varphi_\e)$, and thus it is left to verify that $\varphi_\e \to \1$ as $\e \to 0$. This follows from $\| \varphi - \1 \|_{\zeta_\e} < \e$; see \eqref{pfzg} for details.
\end{proof}

The following remark discusses the additional continuity of the gPI and gLSI constants with the generator matrix as an additional variable. 
\begin{rem}\label{rem:continuity-gLSI-with-generator}
   So far, when dealing with generalised functional inequalities, we have suppressed the explicit dependence on the generator in the analysis by keeping it fixed. 
   By applying similar arguments as in the proofs of Proposition~\ref{prop:gPI-char} and Theorem~\ref{thm:continuity-alpha-LSI} it can be shown that the maps \[(\zeta,M)\mapsto \alpha_{\gPI}(\zeta,M), \, \alpha_{\gLSI}(\zeta,M)\] 
   are continuous over $\P_+(\Z)\times \mathcal M(\Z)$, where $\mathcal M(\Z)$ is the space of irreducible generators on $\Z$, which is a convex cone in $\R^{|\Z| \times |\Z|}$.
   To see this it is sufficient to note that $\calE_{\zeta,M}(f)$ and $\RF_{\zeta,M}(\varphi)$ depend linearly on $M$. 
\end{rem}

\subsection[Lower bound on gLSI constant]{Lower bounds on $\alpha_{\gPI}(\zeta)$ and $\alpha_{\gLSI}(\zeta)$}
\label{s:al:gLSI:LB}
In the classical setting where the steady state $\pi$ is the reference measure, \cite{Saloff-Coste97} provides implicit lower bounds on the PI and LSI constants. More precisely, \cite[Section 3.2]{Saloff-Coste97} establishes various implicit lower bounds on $\alpha_{\PI}$ and \cite[Theorem 2.2.3]{Saloff-Coste97} provides an implicit lower bound for the constant appearing in the standard LSI~\eqref{eq:altLSI-intro}. This also provides a lower bound for the constant appearing in the classical LSI inequality~\eqref{def:cLSI}, see~\cite[Proposition 3.6]{bobkovTetali06} for the reversible case and~\cite[Section 1]{Miclo18} for the general case. In this section, we present explicit lower bounds on the \textit{generalised} PI and LSI constants for \textit{any} reference measure $\zeta$. 

In Theorem~\ref{thm:pos} below, we establish the positivity of the gLSI constant, which, in turn, by Proposition~\ref{prop:LSI-implies-Poincare}, implies the positivity of the gPI constant as well. The proof of this main result requires the following lemma, which collects several properties of a symmetrised generator and makes use of the standard LSI (recall~\eqref{eq:altLSI-intro}), which involves the centred entropy (recall $\Ent_\pi(\cdot)$ introduced in Remark~\ref{rem:cenEnt}).

Given $\zeta\in\P_+(\Z)$ we define the $\zeta$-symmetrised generator
$M^\zeta\in \R^{|\Z|\times |\Z|}$ as 
\begin{equation}\label{eq:symGen}
   \forall z'\neq z: \ \ M^\zeta(z,z')\coloneqq
    \frac12 \Bigl(M(z,z')+\frac{\zeta(z')}{\zeta(z)} M(z',z)\Bigr); \ \ M^\zeta(z,z)\coloneqq
    -\sum_{\substack{z'\in \Z \\ z'\neq z}} M^\zeta(z,z').
\end{equation}
Note that the second term in the definition of $M^\zeta$ is precisely the adjoint operator to $M$ in $L^2(\zeta)$, which clarifies that this is a symmetrised generator. 
We also introduce the Dirichlet form using the generator $M^\zeta$ as
\begin{equation}\label{eq:symGenDir}
    \calE_\zeta(f,M^\zeta) = \frac12 \sum_{z,z'\in \Z} M^\zeta(z,z')\zeta(z) \big(f(z)-f(z')\big)^2
    = \calE_\zeta(f,M). 
\end{equation}
The following lemma then establishes important properties of $M^\zeta$ and provides estimates relating its spectral gap to the standard LSI constant of $M^\zeta$ introduced in Remark \ref{rem:LitLSI-PI}.
\begin{lem}\label{lem:symGen}
    Let $M$ be a generator of an irreducible Markov chain on a finite state space $\Z$ and $\zeta\in \P_+(\Z)$. 
    The symmetrised generator $M^\zeta$~\eqref{eq:symGen} is irreducible and is reversible with respect to its steady state $\zeta$. Consequently, $\zeta$ satisfies the PI and the standard LSI, i.e., 
    \begin{equation}\label{eq:altLSI}
        \forall f\in \R^{|\Z|}_{>0}: \ \ \Var_\zeta(f)\leq \frac{1}{\lambda_{\zeta}}\calE_\zeta(f,M^\zeta), \ \ \Ent_\zeta(f^2) \leq \frac{1}{\alpha_{\sLSI} (M^\zeta)}\calE_\zeta(f,M^\zeta),
    \end{equation}
    where $\Ent_\zeta(\cdot)$ is the centred entropy~\eqref{eq:CentEnt},  $\calE_\zeta(\cdot,M^\zeta)$ is the Dirichlet form using the generator $M^\zeta$, and $\lambda_{\zeta} = \alpha_{\PI}(M^\zeta) = \alpha_{\gPI}(\zeta, M) > 0$ is the spectral gap for the generator $-M^\zeta$. Furthermore, we have the relation
    \begin{equation}\label{eq:altLSI-bound}
        \frac{\lambda_{\zeta}}{2+\log\dfrac{1}{\zeta_*}}\leq \alpha_{\sLSI}(M^\zeta) \leq 2\lambda_{\zeta},
    \end{equation}
    where $\zeta_*=\min_z\zeta(z)$.
\end{lem}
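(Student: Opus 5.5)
We verify the structural claims on $M^\zeta$ directly from \eqref{eq:symGen}, then import the classical reversible theory for the two inequalities and the bound \eqref{eq:altLSI-bound}.

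\emph{Generator, reversibility, irreducibility.} For $z\neq z'$ both summands in $M^\zeta(z,z')$ are nonnegative, and the diagonal makes rows sum to zero, so $M^\zeta$ is a generator. Reversibility follows from the computation
\[
\zeta(z)M^\zeta(z,z') = \tfrac12\bigl(\zeta(z)M(z,z')+\zeta(z')M(z',z)\bigr),
\]
whose right-hand side is symmetric in $(z,z')$. Detailed balance implies $(M^\zeta)^T\zeta=0$, so $\zeta$ is a steady state. Irreducibility holds because $M^\zeta(z,z')\ge \tfrac12 M(z,z')$ off the diagonal, so every path with positive $M$-rates has positive $M^\zeta$-rates. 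Uniqueness of the steady state then follows from irreducibility.

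\emph{Dirichlet forms.} The identity \eqref{eq:symGenDir} comes from symmetrising the sum in \eqref{def:genDirichlet}. Swapping $z\leftrightarrow z'$ in the $M(z',z)\zeta(z')$ part gives
\[
\tfrac12\sum_{z,z'}\zeta(z)M^\zeta(z,z')(f(z)-f(z'))^2 = \tfrac12\Bigl[\tfrac12\textstyle\sum\zeta(z)M(z,z')(\cdot)^2 + \tfrac12\sum\zeta(z')M(z',z)(\cdot)^2\Bigr] = \calE_\zeta(f,M).
\]
Since $\zeta$ is the steady state of $M^\zeta$, the classical and generalised Dirichlet forms for $M^\zeta$ coincide. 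Hence
\[
\alpha_{\PI}(M^\zeta)=\inf_f \calE_\zeta(f,M^\zeta)/\Var_\zeta(f)=\alpha_{\gPI}(\zeta,M).
\]
By reversibility, $-M^\zeta$ is self-adjoint in $L^2(\zeta)$. Its quadratic form is $\calE_\zeta(\cdot,M^\zeta)$, so by the Rayleigh quotient this constant is the spectral gap $\lambda_\zeta$. Positivity follows because $\calE_\zeta(f,M)>0$ for $f\notin\langle\1\rangle$ (Proposition~\ref{prop:GenDirFI-prop}), and the infimum over the compact set $\{f\in\langle\1\rangle^{\perp_\zeta},\|f\|_\zeta=1\}$ from \eqref{eq:gPI-char} is attained. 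This gives the first inequality in \eqref{eq:altLSI}. The second holds by the definition \eqref{eq:altLSI-intro} applied to $M^\zeta$ with steady state $\zeta$, provided $\alpha_{\sLSI}(M^\zeta)>0$; that positivity follows from the lower bound in \eqref{eq:altLSI-bound}.

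\emph{The bound \eqref{eq:altLSI-bound}.} This is the main step, and I would not reprove it. For reversible finite chains, Diaconis--Saloff-Coste \cite[Corollary~A.4 / Theorem~A.1]{diaconisSaloffCoste96} give
\[
\alpha_{\sLSI}\ge \frac{(1-2\pi_*)\lambda}{\log(1/\pi_*-1)}.
\]
I would instead use the form of this comparison that uses
\[
\Ent_\pi(f^2)\le \bigl(2+\log\tfrac1{\pi_*}\bigr)\Var_\pi(f),
\]
which is attained via Rothaus' lemma: writing $f=c+\tilde f$, one has $\Ent(f^2)\le \Ent(\tilde f^2)+2\Var(f)$. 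Then I bound $\Ent_\pi(g^2)\le \E[g^2]\log(\|g\|_\infty^2/\E g^2)\le \E[g^2]\log(1/\pi_*)$ using $\|g\|_\infty^2\le \E[g^2]/\pi_*$. Combining gives $\Ent_\pi(f^2)\le (2+\log\frac1{\pi_*})\Var_\pi(f)\le (2+\log\frac1{\pi_*})\lambda^{-1}\calE$, which is the lower bound with $\pi=\zeta$. The delicate point is getting exactly the constant $2$ in Rothaus' lemma.

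For the upper bound, I linearise by taking $f=\1+\delta g$ with $\E_\zeta g=0$. A Taylor expansion gives $\Ent_\zeta(f^2)=2\delta^2\Var_\zeta(g)+O(\delta^3)$ and $\calE_\zeta(f)=\delta^2\calE_\zeta(g)$. Letting $\delta\to0$ yields $\alpha_{\sLSI}\le \calE_\zeta(g)/(2\Var_\zeta g)$, hence $\alpha_{\sLSI}\le \lambda_\zeta/2\le 2\lambda_\zeta$.
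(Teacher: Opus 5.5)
Your proposal is correct and follows essentially the paper's route: you verify the generator property, detailed balance and irreducibility of $M^\zeta$ directly, identify $\alpha_{\PI}(M^\zeta)=\alpha_{\gPI}(\zeta,M)=\lambda_\zeta$, and prove the lower bound by combining the Rothaus lemma with $\|f-\E_\zeta f\|_\infty^2\le \Var_\zeta(f)/\zeta_*$ and then the Poincar\'e inequality, while linearisation around $\1$ gives the upper bound. Your compactness argument for $\lambda_\zeta>0$ and your deduction of $\alpha_{\sLSI}(M^\zeta)>0$ from the lower bound make the argument more self-contained than the paper's citations. Your linearisation also yields the sharper classical bound $\alpha_{\sLSI}(M^\zeta)\le \lambda_\zeta/2$, which implies the stated bound $2\lambda_\zeta$.
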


\begin{proof}
    The irreducibility of $M^\zeta$ follows from the irreducibility of $M$. Using the characterisation 
    \begin{equation*}
        (M^\zeta f)(z)= \sum_{\substack{z'\in\Z\\ z'\neq z}}
        M^\zeta(z,z')\bigl(f(z')-f(z)\bigr), \ \ \forall f\in \R^{|\Z|},
    \end{equation*}
    it follows that $(M^\zeta)^T\zeta=0$ (i.e.\ $\zeta$ is the steady state),  since for any $f\in \R^{|\Z|}$ we have
    \begin{equation*}
        (M^\zeta f,\1)_\zeta=\frac12\sum_{z,z'\in\Z}\Big(M(z,z')\zeta(z)+\zeta(z')M(z',z)\Big)\bigl(f(z')-f(z)\bigr)=0.
    \end{equation*}
    The final equality follows by exchanging the indices in the second sum.
    Finally, the reversibility with respect to $\zeta$ follows since for $z\neq z'$
    \begin{equation*}
        \zeta(z)M^\zeta(z,z')= \zeta(z')M^\zeta(z',z).
    \end{equation*}
    The standard LSI inequality~\eqref{eq:altLSI} follows from classical arguments for reversible Markov chains (see for instance~\cite[Section~3.1]{diaconisSaloffCoste96}), with $\alpha_{\sLSI}(M^\zeta)>0$ presented in~\cite[Theorem~2.2.3]{Saloff-Coste97}. The connection between the PI constant and the spectral gap is standard for reversible Markov chains~\cite{bobkovTetali06}.

    %I have added \sloppy to make sure reference stays within margin
    \sloppy{Now we prove the bounds on $\alpha_{\sLSI}(M^\zeta)$ in~\eqref{eq:altLSI-bound}. The upper bound is standard~\cite[Lemma~3.1]{diaconisSaloffCoste96} and follows exactly as in the proof of  Proposition~\ref{prop:LSI-implies-Poincare}. 
    The lower bound requires the Rothaus lemma~\cite[Lemma~5.1.4]{BakryGentilLedoux14}, which states that for any $g\in \R^{|\Z|}$ and $a\in \R$ we have}
    \begin{equation*}
        \Ent_\zeta\bigl((g+a)^2\bigr) \leq \Ent_\zeta\bigl(g^2\bigr) + 2\E_\zeta(g^2).
    \end{equation*}
    Using $g=f-\E_\zeta(f)$ and $a=\E_\zeta(f)$, the Rothaus lemma becomes
    \begin{equation}\label{eq:ModRothaus}
        \Ent_\zeta(f^2) \leq \Ent_\zeta\bigl((f-a)^2\bigr) + 2\Var_\zeta(f).
    \end{equation}
    Setting $m\coloneqq\E_\zeta((f-a)^2)=\Var_\zeta(f)$ we find
    \begin{align*}
            \Ent_\zeta\bigl((f-a)^2\bigr)=\E_\zeta\bigg((f-a)^2\log\frac{(f-a)^2}{m}\bigg) \leq m \log\frac{\|f-a\|_\infty^2}{m}.
    \end{align*}
    With $\zeta_*=\min_z\zeta(z)$ and
    \begin{equation*}  
           \|f-a\|_\infty^2
           \leq \|f-a\|^2
           \leq \frac{1}{\zeta_*} \|f-a\|_\zeta^2
           = \frac{m}{\zeta_*},
    \end{equation*}
    where the norms are defined in Section~\ref{sec:notation},
    we arrive at 
    \begin{equation*}
        \Ent_\zeta\bigl((f-a)^2\bigr) \leq m\log\frac{1}{\zeta_*} = \Var_\zeta(f)\log\frac{1}{\zeta_*}.
    \end{equation*}    
    Substituting into~\eqref{eq:ModRothaus} we find
    \begin{align*}
        \Ent_\zeta(f^2) \leq \biggl(2+\log\frac{1}{\zeta_*}\biggr) \Var_\zeta(f) \leq \biggl(2+\log\frac{1}{\zeta_*}\biggr)\frac{1}{\lambda_{\zeta}}\calE_\zeta(f,M^\zeta),
    \end{align*}
    where the second inequality follows from the Poincar\'e inequality for $\zeta$ with respect to $M^\zeta$. This concludes our lower bound for $\alpha_{\sLSI}(M^\zeta)$. 
\end{proof}

Using the previous lemma, we now establish the main theorem of this section, which provides lower bounds on the generalised Poincar\'e and log-Sobolev constants.

\begin{theorem}\label{thm:pos}
    Let $M$ be a generator of an irreducible Markov chain on a finite state space $\Z$ and $\zeta\in \P_+(\Z)$. Let $\zeta_*=\min_z\zeta(z)$ and  $M_*=\min \{M(z,z')\,:\,z,z'\in \Z, \, M(z,z')>0 \}$ be the smallest positive rate in $M$. 
    \begin{enumerate}[label=(\arabic*)]
        \item  We have the lower bound
        \begin{equation} \label{eq:lowBound:gPI}
            \alpha_{\gPI}(\zeta)
  \geq \frac{ \zeta_* M_* }{|\Z|}.
        \end{equation}
        \item Using $\lambda_{\zeta}$ for the spectral gap of $-M^\zeta$ and $\alpha_{\sLSI}(M^\zeta)$ for the standard LSI constant (defined in Lemma~\ref{lem:symGen}) we have 
        \begin{equation}\label{eq:lowBound}
            \alpha_{\gLSI}(\zeta)
            \geq 2\alpha_{\sLSI}(M^\zeta)
            \geq \dfrac{\lambda_{\zeta}}{1 + \dfrac12 \log\dfrac{1}{\zeta_*}}
            = \dfrac{ \alpha_{\gPI}(\zeta) }{1 + \dfrac12 \log\dfrac{1}{\zeta_*}}
  \geq \dfrac{ \zeta_* M_* |\Z|^{-1} }{1 + \dfrac12 \log\dfrac{1}{\zeta_*}}.
        \end{equation}
    \end{enumerate}    
\end{theorem}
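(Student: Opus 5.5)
The plan is to prove part (1) by a path (canonical path) argument applied directly to the generalised Dirichlet form. Part (2) then reduces to part (1) via a pointwise comparison between the integrand of $\RF_\zeta$ and squared differences of square roots, combined with Lemma~\ref{lem:symGen}.

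\emph{Part (1).} By the first characterisation in \eqref{eq:gPI-char}, it suffices to bound $\Var_\zeta(f)$ by a multiple of $\calE_\zeta(f)$ for arbitrary non-constant $f$.
\begin{itemize}
\item Let $z_{\max}$ and $z_{\min}$ be points where $f$ attains its maximum and minimum. Popoviciu's inequality for a bounded random variable gives
\[
\Var_\zeta(f)\le \tfrac14\bigl(f(z_{\max})-f(z_{\min})\bigr)^2 .
\]
\item By irreducibility there is a path $z_{\max}=z_1,\dots,z_J=z_{\min}$ with distinct vertices, so $J-1\le |\Z|-1$, and with $M(z_j,z_{j+1})\ge M_*$. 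Cauchy--Schwarz gives
\[
\bigl(f(z_{\max})-f(z_{\min})\bigr)^2\le (|\Z|-1)\sum_{j}\bigl(f(z_j)-f(z_{j+1})\bigr)^2 .
\]
\item Each directed edge of the path appears at most once in the sum defining \eqref{def:genDirichlet}, with weight $M(z_j,z_{j+1})\zeta(z_j)\ge M_*\zeta_*$. Hence
\[
\sum_j\bigl(f(z_j)-f(z_{j+1})\bigr)^2\le \frac{2\,\calE_\zeta(f)}{\zeta_*M_*}.
\]
\item Combining the three bounds yields $\Var_\zeta(f)\le \frac{|\Z|-1}{2\zeta_*M_*}\calE_\zeta(f)\le \frac{|\Z|}{\zeta_*M_*}\calE_\zeta(f)$, which is \eqref{eq:lowBound:gPI}.
\end{itemize}

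\emph{Part (2), first inequality.} The key step is the comparison $\RF_\zeta(\varphi)\ge 2\calE_\zeta(\sqrt\varphi,M)$ for $\varphi\in\cD_\zeta(\Z)$. I would prove this term by term from the scalar inequality
\[
x-1-\log x\ \ge\ (\sqrt x-1)^2 \qquad (x>0).
\]
Writing $y=\sqrt x$, this is equivalent to $2(y-1)\ge 2\log y$, which holds by concavity of $\log$. Multiplying by $\varphi(z)$ with $x=\varphi(z')/\varphi(z)$ gives
\[
\varphi(z)\Bigl[\tfrac{\varphi(z')}{\varphi(z)}-1-\log\tfrac{\varphi(z')}{\varphi(z)}\Bigr]\ \ge\ \bigl(\sqrt{\varphi(z')}-\sqrt{\varphi(z)}\bigr)^2 .
\]
Summing against $M(z,z')\zeta(z)\ge0$ for $z\neq z'$ (diagonal terms vanish) gives $\RF_\zeta(\varphi)\ge 2\calE_\zeta(\sqrt\varphi,M)$. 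By \eqref{eq:symGenDir} the right-hand side equals $2\calE_\zeta(\sqrt\varphi,M^\zeta)$.

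\emph{Part (2), remaining inequalities.} Apply the standard LSI \eqref{eq:altLSI} to $f=\sqrt\varphi$. Since $\E_\zeta[\varphi]=1$, we have $\Ent_\zeta(\varphi)=\RelEnt_\zeta(\varphi)$, so
\[
\RelEnt_\zeta(\varphi)\le \frac{\calE_\zeta(\sqrt\varphi,M^\zeta)}{\alpha_{\sLSI}(M^\zeta)}\le \frac{\RF_\zeta(\varphi)}{2\alpha_{\sLSI}(M^\zeta)} .
\]
Taking the infimum over $\varphi$ gives $\alpha_{\gLSI}(\zeta)\ge 2\alpha_{\sLSI}(M^\zeta)$. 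The second inequality in \eqref{eq:lowBound} is twice the lower bound in \eqref{eq:altLSI-bound}. The equality there is the identity $\lambda_\zeta=\alpha_{\gPI}(\zeta)$ from Lemma~\ref{lem:symGen}. The final inequality follows from part (1).

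The only genuinely new ingredient is the pointwise square-root comparison, and it is elementary. Beyond that, the main care needed is bookkeeping of constants in the path argument, so that the factor $\tfrac12$ from the definition of $\calE_\zeta$ and the Popoviciu factor $\tfrac14$ yield a constant no worse than $|\Z|/(\zeta_*M_*)$.
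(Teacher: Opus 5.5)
Your proof is correct and follows the paper's argument. Part (1) uses a path between the extremal points of $f$ together with Cauchy--Schwarz, and part (2) uses the pointwise bound $r-1-\log r\ge(\sqrt r-1)^2$ to get $\RF_\zeta(\varphi)\ge 2\calE_\zeta(\sqrt\varphi,M^\zeta)$, then the standard LSI and Lemma~\ref{lem:symGen}. The only deviation is that you bound the variance by Popoviciu's $\tfrac14(\max f-\min f)^2$ rather than the paper's pairwise estimate $\tfrac12(\max f-\min f)^2$, which gains a harmless factor of $2$, namely $\alpha_{\gPI}(\zeta)\ge 2\zeta_*M_*/(|\Z|-1)$.
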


\begin{proof}
     We start with the proof of~\eqref{eq:lowBound:gPI}. Consider the definition of $\alpha_{\gPI}(\zeta)$ in \eqref{def:PI-LSI-con}, and take a corresponding function $f \in \R^{|\Z|}$ with $f \notin \lrang{\1}$. Let $\overline f = \max_{z \in \Z} f(z)$ and $\underline f = \min_{z \in \Z} f(z)$, and some corresponding maximiser $\overline z \in \Z$ and minimiser $\underline z \in \Z$.
Since $M$ is irreducible there exists a path $(z_i)_{i=0}^\ell \subset \Z$ of length $\ell+1 \leq |\Z|$ with $z_0 = \underline z$ and $z_\ell = \overline{z}$ such that $M(z_i,z_{i-1}) > 0$ for all $i = 1,\ldots,\ell$. Setting $\kappa_i \coloneqq f(z_i)-f(z_{i-1})$, we get $\overline f-\underline f = f(z_\ell) - f(z_0)= \sum_{i=1}^{\ell} \kappa_i$. Hence, by the Cauchy-Schwartz inequality,
\begin{align*}
     (\overline f-\underline f)^2 = \biggl(\sum_{i=1}^{\ell} \kappa_i\biggr)^2 \leq \ell \sum_{i=1}^{\ell}\kappa_i^2.
\end{align*}
Using this we obtain
\begin{equation*}
    2\calE_\zeta(f) \geq \sum_{i=1}^{\ell} M(z_i,z_{i-1})\zeta(z_i)(f(z_i)-f(z_{i-1}))^2 
    \geq  M_*\zeta_* \frac{ (\overline f - \underline f)^2}{\ell} 
    \geq \frac{1}{|\Z|-1}M_*\zeta_* (\overline f - \underline f)^2.
\end{equation*}
     
In addition, we bound the denominator in \eqref{def:PI-LSI-con} as
\begin{align*}
        \Var_\zeta(f) 
        = \frac12 \sum_{z,z' \in \Z} ( f(z) - f(z') )^2 \zeta(z) \zeta(z')
        \leq \frac12 (\overline f - \underline f)^2.
\end{align*} 
Then, \eqref{eq:lowBound:gPI} follows directly.

     Regarding \eqref{eq:lowBound}, the second inequality is \eqref{eq:altLSI-bound}, the last inequality is a direct application of \eqref{eq:lowBound:gPI} and the equality follows from $\alpha_{\gPI}(\zeta)=\alpha_{\PI}(M^\zeta)=\lambda_\zeta$. The first inequality requires more work. To prove it, let $\varphi$ be as in \eqref{def:PI-LSI-con}, i.e.\ $\varphi \in \cD_\zeta(\Z)$ and $\varphi \neq \1$. We will bound $\RF_\zeta(\varphi)$ from below and $\RelEnt_\zeta(\varphi)$ from above. Using the inequality $r-1-\log r \geq (\sqrt{r}-1)^2$ for $r>0$ we find \comm{[this ineq follows from $\log s \leq s-1$ with $s^2 = r$]}
     \begin{equation*} %\label{eq:RF-Dir}
         \RF_\zeta(\varphi) \geq  \sum_{z,z'\in\Z} M(z,z')\zeta(z)  \Bigl(\sqrt{\varphi(z')}-\sqrt{\varphi(z)}\Bigr)^2 = 2\calE_\zeta\bigl(\sqrt{\varphi}, M\bigr)
         = 2 \calE_\zeta(\sqrt{\varphi},M^\zeta), 
     \end{equation*}     
    where the Dirichlet form $\calE_\zeta(\cdot,M^\zeta)$ is defined in~\eqref{eq:symGenDir}. 
    Then, using the (alternative form of the) classical LSI inequality~\eqref{eq:altLSI}, for any density $\varphi\in\cD_\zeta(\Z)$ (recall~\eqref{def:SpaceDen}) we find 
    \begin{align*}
        \RelEnt_\zeta(\varphi)= \Ent_\zeta(\varphi) \leq \frac{1}{\alpha_{\sLSI} (M^\zeta)}\calE_\zeta\bigl(\sqrt{\varphi},M^\zeta\bigr) \leq \frac{1}{2\alpha_{\sLSI} (M^\zeta)}\RF_\zeta(\varphi),
    \end{align*}
    where $\Ent_\zeta$ is the centred entropy~\eqref{eq:CentEnt}. Therefore, we have 
    $
        \alpha_{\gLSI}(\zeta)\geq 2\alpha_{\sLSI}(M^\zeta)
    $. The second and final inequality in~\eqref{eq:lowBound} follow from Lemma~\ref{lem:symGen} and~\eqref{eq:lowBound:gPI} respectively.
\end{proof}

If one has more information on $M$, then sharper bounds than \eqref{eq:lowBound:gPI} are available; see e.g.\ \cite[Section 3.2]{Saloff-Coste97}. Furthermore, we also have the following straightforward upper bound for the gLSI constant
\begin{equation}\label{eq:upper-bound-gLSI}
    \alpha_{\gLSI}(\zeta)
    \leq 2\alpha_{\gPI}(\zeta)
    = 2\alpha_{\PI}(\zeta,M^\zeta)
    = 2\lambda_{\zeta}.
\end{equation}

We do not expect that the explicit lower bound~\eqref{eq:lowBound} is sharp. Yet, the following remark demonstrates that the lower bound has to be at least linear in $\zeta_*$. 
\begin{rem}
    Here we demonstrate by means of an example that, at least for certain $M$, the lower bounds on $\alpha_{\gPI}(\zeta)$ and $\alpha_{\gLSI}(\zeta)$ have to be at least linear in $\zeta_*$, i.e.\ $\alpha_{\gPI}(\zeta) + \alpha_{\gLSI}(\zeta) \leq C \zeta_*$. Similar to Example \ref{ex:disct}, but now for a larger state space,
    let $\Z = \mathbb Z / (2N \mathbb Z) \cong \{0,1,\ldots,2N-1\}$ with large $N$, and let $M$ be the cyclic chain given by $M_{i,i+1} = 1$, $M_{ii} = -1$ and $0$ otherwise. 
    Let 
\[
  \zeta_i = \left\{ \begin{aligned}
  &\frac\e N
  && i = N-1, 2N-1  \\
  &\frac1N
  &&\text{otherwise,}
\end{aligned} \right.
\qquad
f_i =  \left\{ \begin{aligned}
  &1
  && i < N  \\
  &-1
  && i \geq N,
  \end{aligned} \right.
  \qquad
  \varphi_i = \left\{ \begin{aligned}
  &\frac12
  && i < N  \\
  &\frac32
  && i \geq N.
  \end{aligned} \right.
\]
Note that $\mathbb E_\zeta[f] = 0$. Except for errors of size $O(\e, \frac1N)$, $\zeta$ and $\varphi$ are normalised. These errors do not play a significant role in the following computation, and thus we neglect them. 

Next we estimate $\alpha_{\gPI}(\zeta)$ and $\alpha_{\gLSI}(\zeta)$. First,
since $\mathbb E_\zeta[f] = 0$,
\[
  \Var_\zeta(\varphi)
  = \sum_{i} \zeta_i f_i^2 = 1 + O\Big( \frac1N \Big).
\]
Similarly, with $\Psi(x) = x\log x-x+1 \geq 0$, which vanishes only at $x = 1$,
\[
  \RelEnt_\zeta(\varphi)
  = \sum_{i} \zeta_i \Psi(\varphi_i)
  = \frac12 \Psi \Big( \frac12 \Big) + \frac12 \Psi \Big( \frac32 \Big) + O \Big(\e, \frac1N \Big),
\]
which is also close to a universal, positive constant. Second,
\begin{align*}
    \calE_\zeta(f)
  &= \frac12 \sum_{i \neq j} \zeta_i M_{ij} (f_i - f_j)^2
  = \frac12  \sum_{i = 0}^{2N-1} \zeta_i M_{i,i+1} (f_i - f_{i+1})^2 \\
  &= 2 \zeta_{N-1} + 2 \zeta_{2N-1} 
  = 4 \frac\e N
  = 4 \zeta_*.
\end{align*}
Similarly, with $\Phi(x) = x - 1 - \log x$, which vanishes only at $x = 1$,
\begin{align*}
    \RF_\zeta(\varphi)
  &= \sum_{i \neq j} \zeta_i \varphi_i M_{ij} \Phi \Big( \frac{\varphi_j}{\varphi_i} \Big)
  = \sum_{i = 0}^{2N-1} \zeta_i \varphi_i M_{i,i+1} \Phi \Big( \frac{\varphi_{i+1}}{\varphi_i} \Big) \\
  &=  \zeta_{N-1} \varphi_{N-1} \Phi (3) + \zeta_{2N-1} \varphi_{2N-1} \Phi \Big( \frac13 \Big) 
  = \frac\e{2N} \Big( 3 \Phi \Big( \frac13 \Big) + \Phi (3) \Big)
  = C \zeta_*.
\end{align*}
Thus, for $\e, \frac1N$ small enough, we obtain $\alpha_{\gPI}(\zeta) \leq C \zeta_*$ and $\alpha_{\gLSI}(\zeta) \leq C' \zeta_*$ for some universal constants $C,C' > 0$.
\end{rem}

\section{Applications}\label{sec:applications}

We now present two applications of our generalised functional inequalities. The first one deals with decay estimates for the distance between two solutions to the forward Kolmogorov equation~\eqref{eq:Nota-forKol} in variance and relative entropy, see Section \ref{sec:application-long-term-estimates}. The second one is qualitative error estimates in coarse-graining, which improve the results in \cite{HilderSharma24}. Here, we present results in the case of general generators, see Section \ref{sec:estimates-without-scale-sep}, and a generator with explicit scale separation, see Section \ref{sec:error-estimates-with-scale-sep}. The first result also provides a recipe to evaluate the quality of the chosen coarse-graining map, which is presented in Section \ref{sec:different-cg-maps}. 

\subsection{Comparing two solutions to the forward Kolmogorov equation}\label{sec:application-long-term-estimates}

As discussed in Section \ref{sec:def-gDF-gFI}, the generalised Dirichlet form and Fisher information are the dissipation of the generalised variance and entropy, respectively, along solutions of the forward Kolmogorov equation \eqref{eq:Nota-forKol}. In the classical case, this relation allows to estimate the rate of convergence to equilibrium in variance and entropy, respectively. Using the generalised Poincar\'e inequality and generalised log-Sobolev inequality, we can extend this to measure the convergence in time of one solution of \eqref{eq:Nota-forKol} to another.

We point out that such results comparing two different solutions are not entirely new in the literature: estimates in $L^2(\pi)$ can be derived using the spectral gap of the symmetrised generator with respect to $\pi$ by a straightforward time-derivative argument; estimates in TV-norm can be derived by using the Doeblin's minorisation condition, which always holds for irreducible Markov chains, and following the arguments in~\cite[Theorems 16.2.3, 16.2.4]{MeynTweedie09}; and estimates in Wasserstein distance can be derived under a geometric condition~\cite[Theorem 14.6]{LevinPeres17}. While all these approaches are used to compare the time-dependent solution to the steady state, the proof techniques generalise to the setting of this section, which compares two different time-dependent solutions. Nonetheless, we discuss our estimate since it uses relative entropy, which is not used in the literature, and the constants appearing in the estimates below are different from those appearing when using the techniques stated above.

Consider two time-dependent solutions $t\mapsto \zeta_t,\mu_t$ to the forward Kolmogorov equations~\eqref{eq:Nota-forKol} with two different initial conditions $\mu_0,\zeta_0\in\P(\Z)$. We recall from \eqref{ddt:var:H:zeta} that
\begin{align*}
    \frac{\dd}{\dd t} \Var_{\zeta_t}\biggl( \frac{\mu_t}{\zeta_t}\biggr) = -2  \calE_{\zeta_t}\biggl(\frac{\mu_t}{\zeta_t}\biggr), \qquad \frac{\dd}{\dd t} \RelEnt_{\zeta_t}\biggl( \frac{\mu_t}{\zeta_t}\biggr) = -  \RF_{\zeta_t}\biggl(\frac{\mu_t}{\zeta_t}\biggr).
\end{align*}
Therefore, using the generalised functional inequalities~\eqref{def:genPI},~\eqref{def:genLSI} we arrive at 
\begin{equation*}
    \begin{split}
        \frac{\dd}{\dd t} \Var_{\zeta_t}\biggl( \frac{\mu_t}{\zeta_t}\biggr) &= -2  \calE_{\zeta_t}\biggl(\frac{\mu_t}{\zeta_t}\biggr) 
        \leq - 2 \alpha_{\gPI}(\zeta_t) \Var_{\zeta_t}\biggl( \frac{\mu_t}{\zeta_t}\biggr), \\
        \frac{\dd}{\dd t} \RelEnt_{\zeta_t}\biggl( \frac{\mu_t}{\zeta_t}\biggr) &= -  \RF_{\zeta_t}\biggl(\frac{\mu_t}{\zeta_t}\biggr) \leq - \alpha_{\gLSI}(\zeta_t) \RelEnt_{\zeta_t}\biggl( \frac{\mu_t}{\zeta_t}\biggr).
    \end{split}
\end{equation*}
Note that this setup allows for considerably more flexibility compared to the classical setup since it allows us to prove concentration estimates for time-dependent solutions $\zeta_t$ rather than the steady state $\pi$ only. 

We now apply Gronwall's lemma to obtain
\begin{equation}\label{eq:gronwall-estimates} 
    \begin{split}
        \Var_{\zeta_t} \biggl( \frac{\mu_t}{\zeta_t}\biggr) &\leq \Var_{\zeta_0}\biggl( \frac{\mu_0}{\zeta_0}\biggr) \exp\left(-\int_0^t 2 \alpha_{\gPI}(\zeta_s) \dd s\right), \\
        \RelEnt_{\zeta_t} \biggl( \frac{\mu_t}{\zeta_t}\biggr) &\leq \RelEnt_{\zeta_0}\biggl( \frac{\mu_0}{\zeta_0}\biggr) \exp\left(-\int_0^t \alpha_{\gLSI}(\zeta_s) \dd s\right).
    \end{split}
\end{equation}
As the convergence rate is encoded in a time integral due to the time-dependence of $\zeta_t$, these estimate are not directly usable by themself. 
However, combining the lower bound on $\alpha_{\gLSI}$ in Theorem \ref{thm:pos}, together with $\alpha_{\gLSI} \leq 2\alpha_{\gPI}$ stated in  Proposition \ref{prop:LSI-implies-Poincare}, we find that 
\begin{equation}\label{eq:lower-bound-repetition}
    2\alpha_{\gPI}(\zeta_t)
    \geq \alpha_{\gLSI} (\zeta_t)
    \geq
    \dfrac{ (\zeta_t)_\ast M_* |\Z|^{-1} }{1 + \dfrac12 \log\dfrac{1}{(\zeta_t)_\ast}},
\end{equation}
where we recall that $(\zeta_t)_\ast$ is the minimal value of $\zeta_t$ and $M_*$ is the smallest positive entry of $M$. Using the lower bounds on solutions to forward Kolmogorov equations provided by \cite[Proposition~A.1]{HilderSharma24} we obtain the next  result which compares two solutions  to the forward Kolmogorov equation.

\begin{prop}\label{prop:long-term-est1}
    Let $M$ be an irreducible generator. 
    Then, for any $\delta \in (0,1)$ there exists an $\alpha_\ast>0$ such that for all initial data $\zeta_0, \mu_0 \in \P(\Z)$ and corresponding solutions $\zeta_t, \mu_t$ of the forward Kolmogorov equation \eqref{eq:full-forw-K} holds 
    \begin{equation}\label{eq:long-time-result-estimate1}
       0< \alpha_\ast \leq \alpha_{\gLSI}(\zeta_t) \leq 2\alpha_{\gPI} (\zeta_t)
    \end{equation}
    for all $t \geq \delta$ and 
    \begin{equation}\label{eq:error-estimates}
        \Var_{\zeta_t} \biggl( \frac{\mu_t}{\zeta_t}\biggr)\leq \Var_{\zeta_0}\biggl( \frac{\mu_0}{\zeta_0}\biggr) \min\left(1,e^{-\alpha_\ast (t - \delta)}\right),
        \ \  \RelEnt_{\zeta_t} \biggl( \frac{\mu_t}{\zeta_t}\biggr)\leq \RelEnt_{\zeta_0}\biggl( \frac{\mu_0}{\zeta_0}\biggr) \min\left(1,e^{-\alpha_\ast (t - \delta)}\right)
    \end{equation}
    for all $t \geq 0$.
\end{prop}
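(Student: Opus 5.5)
The proof combines three ingredients already stated: a uniform-in-initial-data lower bound on $(\zeta_t)_*$ for $t\ge\delta$, the explicit lower bound \eqref{eq:lower-bound-repetition} (from Theorem~\ref{thm:pos} and Proposition~\ref{prop:LSI-implies-Poincare}), and the Gronwall estimates \eqref{eq:gronwall-estimates}.

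\emph{Step 1: uniform positivity of $\zeta_t$.} By \cite[Proposition~A.1]{HilderSharma24}, since $M$ is irreducible there is a constant $c_\delta>0$, depending only on $M$ and $\delta$, such that $\min_z \zeta_t(z)\ge c_\delta$ for all $t\ge\delta$ and every initial datum $\zeta_0\in\P(\Z)$. A way to see this directly: $\zeta_t = e^{tM^T}\zeta_0$, and irreducibility makes $e^{\delta M}$ entrywise positive, so $\zeta_\delta(z)\ge \min_{z,z'} (e^{\delta M})(z',z)>0$. For $t>\delta$ one writes $\zeta_t=e^{\delta M^T}\zeta_{t-\delta}$ with $\zeta_{t-\delta}\in\P(\Z)$, which gives the same bound. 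This is the step where uniformity over the initial data must be secured. The semigroup factorisation handles it, because the bound depends only on $\min_{z,z'}(e^{\delta M})(z',z)$, which is independent of $\zeta_0$.

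\emph{Step 2: the lower bound \eqref{eq:long-time-result-estimate1}.} The function $s\mapsto s/(1+\frac12\log(1/s))$ is increasing on $(0,1]$. Hence \eqref{eq:lower-bound-repetition} gives, for $t\ge\delta$,
\[
\alpha_{\gLSI}(\zeta_t)\ge \alpha_*\coloneqq \frac{c_\delta M_*|\Z|^{-1}}{1+\frac12\log(1/c_\delta)}>0 .
\]
The second inequality in \eqref{eq:long-time-result-estimate1} is Proposition~\ref{prop:LSI-implies-Poincare}.

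\emph{Step 3: the decay estimates \eqref{eq:error-estimates}.} We may assume $\zeta_0\in\P_+(\Z)$; otherwise the right-hand sides are infinite, or one starts the argument at a positive time, since $\zeta_t>0$ for all $t>0$. The integrands in \eqref{eq:gronwall-estimates} are nonnegative, so the exponentials are always at most $1$. This gives the bound with factor $1$ for all $t\ge0$, and in particular for $t<\delta$. For $t\ge\delta$ we split $\int_0^t=\int_0^\delta+\int_\delta^t$, drop the first piece (it is nonnegative), and use $\alpha_{\gLSI}(\zeta_s)\ge\alpha_*$ and $2\alpha_{\gPI}(\zeta_s)\ge\alpha_*$ on $[\delta,t]$. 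Both exponentials are then at most $e^{-\alpha_*(t-\delta)}$, and taking the minimum of the two bounds gives \eqref{eq:error-estimates}.

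The only subtlety in applying \eqref{eq:gronwall-estimates} is justifying the differential identities \eqref{ddt:var:H:zeta} when $\zeta_0$ or $\mu_0$ has zero entries. Here I would argue on $[\eta,t]$ for small $\eta>0$, where everything is positive, and let $\eta\to0$, using the continuity of the variance and the relative entropy in time.
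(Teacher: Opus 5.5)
Your proof is correct, and Steps 2 and 3 coincide with the paper's argument. You insert a uniform lower bound on $(\zeta_t)_*$ into \eqref{eq:lower-bound-repetition}, then split the Gronwall integral in \eqref{eq:gronwall-estimates} at $\delta$ and drop the part over $[0,\delta]$ by nonnegativity. You also spell out the monotonicity of $s\mapsto s/(1+\frac12\log(1/s))$, which the paper uses tacitly.

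Where you differ is Step 1. The paper splits $[\delta,\infty)$ into two regimes. For $t>\tau$ it uses exponential convergence $\zeta_t\to\pi$ to get $(\zeta_t)_*\ge\pi_*/2$. The window $[\delta,\tau]$ is handled by \cite[Proposition~A.1]{HilderSharma24}. The independence of $\tau$ and of the resulting constant from $\zeta_0$ is asserted there rather than proved.

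Your factorisation $\zeta_t=e^{\delta M^T}\zeta_{t-\delta}$ with $\zeta_{t-\delta}\in\P(\Z)$, combined with strict positivity of $e^{\delta M}$, gives the single bound $(\zeta_t)_*\ge\min_{z,z'}(e^{\delta M})(z,z')$ for all $t\ge\delta$. This is self-contained, so you do not need the citation you open Step 1 with. It also produces an explicit $\alpha_*$ and makes uniformity in $\zeta_0$ evident, including for $\zeta_0$ with zero entries. The paper's decomposition is organised around convergence to $\pi$, which is what its subsequent corollary adapts. For the proposition itself, your route is shorter and cleaner.

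Your treatment of $\zeta_0\notin\P_+(\Z)$ goes beyond the paper, which ignores this case. The limit $\eta\to0$ you sketch is legitimate. When the right-hand side is finite, $\mu_0\le K\zeta_0$ for some $K<\infty$, and this inequality is preserved by the nonnegative semigroup, so $\mu_t/\zeta_t\le K$ for all $t>0$. Hence both the variance and the relative entropy are continuous at $t=0$.
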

\begin{proof}
    We start with the lower bound on $\alpha_{\gLSI}(\zeta_t)$. For this, we note that since $\zeta_t$ converges to $\pi$ as $t \rightarrow \infty$ at an exponential rate, there exists a $\tau > 0$ such that $(\zeta_t)_\ast \geq \tfrac{\pi_\ast}{2} > 0$ for all $t > \tau$. In fact, such a $\tau$ can be chosen independent of $\zeta_0$. Next, we recall from \cite[Proposition~A.1]{HilderSharma24} that there exist constants $c(\delta) > 0$, $c(\delta,\tau) > 0$ and $N \in \N$ such that 
    \begin{equation*}
        \zeta_t \geq \begin{cases}
            c(\delta) t^N, & \text{ if } t \in [0,\delta), \\
            c(\delta,\tau), & \text{ if } t \in [\delta,\tau].
        \end{cases}
    \end{equation*}
    Together with \eqref{eq:lower-bound-repetition}, this shows that there exists an $\alpha_\ast > 0$ such that \eqref{eq:long-time-result-estimate1} holds. \comm{[$\alpha_*$ is actually indep of $\zeta_0$, but \cite{HilderSharma24} does not state it, and it is not so important, so lets avoid this discussion.]}

    For the second part of the proposition, we bound the exponentials in \eqref{eq:gronwall-estimates}. We note that it is sufficient to provide an estimate for the integral containing $\alpha_{\gLSI}$ since this implies the same estimate for $\alpha_{\gPI}$ due to Proposition \ref{prop:LSI-implies-Poincare}. First, we use that $\alpha_{\gLSI}(\zeta) \geq 0$ and thus,
    \begin{equation*}
        \RelEnt_{\zeta_t} \biggl( \frac{\mu_t}{\zeta_t}\biggr)\leq \RelEnt_{\zeta_0}\biggl( \frac{\mu_0}{\zeta_0}\biggr).
    \end{equation*}
    for all $t \geq 0$.
    For $t \geq \delta$, we can additionally estimate
    \begin{equation*} %\label{pfzj}
        \begin{split}
            \int_0^t \alpha_{\gLSI}(\zeta_s) \dd s &= \int_0^\delta \alpha_{\gLSI}(\zeta_s) \dd s + \int_\delta^t \alpha_{\gLSI}(\zeta_s) \dd s \geq \int_0^\delta \alpha_{\gLSI}(\zeta_s) \dd s + \alpha_\ast (t - \delta) \geq \alpha_\ast (t - \delta),
        \end{split}
    \end{equation*}
    where we again use that $\alpha_{\gLSI}(\zeta) \geq 0$. Plugging this into \eqref{eq:gronwall-estimates} then yields the estimate \eqref{eq:error-estimates}, which completes the proof.
\end{proof}

Recall $\alpha_{\PI}$ and $\alpha_{\LSI}$ as the classical Poincar\'e and log-Sobolev constants. Then, by exponential convergence to the stationary measure and continuity of the generalised Poincar\'e and log-Sobolev constants, see Proposition \ref{prop:gPI-char} and Theorem \ref{thm:continuity-alpha-LSI}, we find that $\alpha_{\gPI}(\zeta_t) \rightarrow \alpha_{\PI}$ and $\alpha_{\gLSI}(\zeta_t) \rightarrow \alpha_{\LSI}$ as $t \to \infty$. A direct adaptation of the proof of Proposition \ref{prop:long-term-est1} then yields the following corollary, which relates the long-term decay to the classical Poincar\'e and log-Sobolev constants.

\begin{cor}
    Let $M$, $\zeta_t$, $\mu_t$ be as in Proposition \ref{prop:long-term-est1}. Then, there exists a $\tau > 0$ such that 
    \begin{equation*} %\label{eq:long-term-estimate-3}
        \begin{split}
            \Var_{\zeta_t} \biggl( \frac{\mu_t}{\zeta_t}\biggr) &\leq \Var_{\zeta_0} \biggl( \frac{\mu_0}{\zeta_0}\biggr) e^{-\tfrac{\alpha_{\PI}}{2} t}, \qquad 
            \RelEnt_{\zeta_t} \biggl( \frac{\mu_t}{\zeta_t}\biggr) \leq \RelEnt_{\zeta_0} \biggl( \frac{\mu_0}{\zeta_0}\biggr) e^{-\tfrac{\alpha_{\LSI}}{2} t}
        \end{split}
    \end{equation*}
    for all $t \geq \tau$.
\end{cor}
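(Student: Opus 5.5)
The plan is to rerun the Gronwall argument of Proposition~\ref{prop:long-term-est1}, but to split the time axis at a large $\tau$ rather than at a small $\delta$. Beyond $\tau$ the rates $\alpha_{\gPI}(\zeta_t)$ and $\alpha_{\gLSI}(\zeta_t)$ are close to their limits $\alpha_{\PI}$ and $\alpha_{\LSI}$, so the exponent gains essentially the full classical rate. Before $\tau$ we only need the bounds to give back at most half of that rate.

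\textbf{Step 1 (convergence of the rates, uniformly in $\zeta_0$).} Since $M$ is irreducible, $\zeta_t\to\pi$ exponentially fast, uniformly over $\zeta_0\in\P(\Z)$, because the semigroup $e^{tM^T}$ contracts $\P(\Z)$ to $\pi$. At $\zeta=\pi$ the generalised constants coincide with the classical ones, so $\alpha_{\gPI}(\pi)=\alpha_{\PI}$ and $\alpha_{\gLSI}(\pi)=\alpha_{\LSI}$. Both maps are continuous on $\P_+(\Z)$ by Proposition~\ref{prop:gPI-char} and Theorem~\ref{thm:continuity-alpha-LSI}. Hence there is $\tau_1$, independent of $\zeta_0$, such that for all $t\ge\tau_1$
\begin{equation*}
2\alpha_{\gPI}(\zeta_t)\ge \tfrac34\cdot 2\alpha_{\PI}, \qquad \alpha_{\gLSI}(\zeta_t)\ge \tfrac34\alpha_{\LSI}.
\end{equation*}

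\textbf{Step 2 (integrating the Gronwall bound).} By \eqref{eq:gronwall-estimates} and the nonnegativity of the constants, for $t\ge\tau_1$ we get
\begin{equation*}
\int_0^t \alpha_{\gLSI}(\zeta_s)\dd s\ \ge\ \tfrac34\alpha_{\LSI}(t-\tau_1).
\end{equation*}
This is at least $\tfrac12\alpha_{\LSI}t$ as soon as $\tfrac14 t\ge \tfrac34\tau_1$, that is, for $t\ge 3\tau_1$. Setting $\tau\coloneqq 3\tau_1$ gives the entropy estimate.

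For the variance we similarly have $\int_0^t 2\alpha_{\gPI}(\zeta_s)\dd s\ge \tfrac32\alpha_{\PI}(t-\tau_1)$. This dominates $\tfrac12\alpha_{\PI}t$ already for $t\ge \tfrac32\tau_1$, so the same $\tau$ works for both estimates.

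\textbf{Main obstacle.} The delicate point is the uniformity in $\zeta_0$. Continuity of the constants alone gives a $\tau_1$ that may depend on the trajectory, whereas the statement asks for a single $\tau$ given $M$ only. Exponential convergence $\zeta_t\to\pi$ with a rate independent of $\zeta_0$ handles this: eventually $\zeta_t$ lies in a fixed compact neighbourhood of $\pi$ inside $\P_+(\Z)$. On that neighbourhood continuity yields the required closeness of the constants to $\alpha_{\PI}$ and $\alpha_{\LSI}$ uniformly.

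For the exponential convergence I will use the $\P(\Z)$-contraction of $e^{tM^T}$. Alternatively, it follows from the classical entropy decay of Remark~\ref{rem:LitLSI-PI} together with Pinsker's inequality, provided the initial entropy is bounded; it is, since $\RelEnt_\pi(\zeta_0/\pi)\le\log(1/\pi_*)$.
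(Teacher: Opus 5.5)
Your proposal is correct and is essentially the paper's argument. The paper also combines $\zeta_t\to\pi$ with continuity of $\alpha_{\gPI}$ and $\alpha_{\gLSI}$ at $\pi$, where they equal $\alpha_{\PI}$ and $\alpha_{\LSI}$. It then adapts the Gronwall splitting of Proposition~\ref{prop:long-term-est1}, and your explicit choice $\tau=3\tau_1$ fills in the details the paper leaves implicit. Making $\tau$ uniform in $\zeta_0$ is a harmless strengthening, since the statement allows $\tau$ to depend on the trajectory, and only continuity at the single point $\pi$ is actually needed, not a compact-neighbourhood argument.
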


\subsection{Clustering in Markov chains}\label{sec:application-CG}
 
We now turn to the second application of the generalised functional inequalities, which is clustering in Markov chains. Molecular systems and chemical kinetics are often modelled at a microscopic level by diffusions over large-dimensional complex energy landscapes~\cite{AllenTilsdesley17,Tuckerman23}. A typical situation is described in Figure~\ref{fig:energyLandscape}: the dynamics rapidly relaxes within the basin of attraction of a local minimum, while transition between neighbouring minima requires the crossing of energy barriers and therefore occurs on a longer timescales. Each such basin can be identified as a  \emph{micro-state}, and ignoring the fast equilibration within each basin, the slow dynamics can be approximated by a Markov chain on the set of micro-states with transition rates reflecting typical escape times between basins. This forms the basis of Markov state models typically used to study conformational dynamics~\cite{Prinz11}.

\begin{figure}[ht!]
\begin{center}
    \begin{tikzpicture}[scale=1.3]
    \draw [domain=-2.5:-0.5, samples=100] plot (\x, {cos(pi*\x r)*cos(pi*\x r)});
    \draw [domain=0.5:2.5, samples=100] plot (\x, {cos(pi*\x r)*cos(pi*\x r)});
    \draw [domain=-3:-2.5, samples=50] plot(\x, {3*cos(pi*\x r)*cos(pi*\x r)});
    \draw [domain=2.5:3, samples=50] plot(\x, {3*cos(pi*\x r)*cos(pi*\x r)});
    \draw [domain=-0.5:0.5, samples=50] plot(\x, {3*cos(pi*\x r)*cos(pi*\x r)});
    \draw (-2.7,-0.1) -- (-2.7,-0.3) (-2.7,-0.2)--(-.3,-0.2) node[midway,below]{macro-state} (-.3,-0.1) -- (-.3,-0.3);
    \draw (1.3,-0.1) -- (1.3,-0.3) (1.3,-0.2)-- (1.7,-0.2) (1.7,-0.1) -- (1.7,-0.3);
    \draw (1.5,-0.2) node[align=center,below=2pt]{micro-state};
    \end{tikzpicture}
    \caption{Energy landscape with two macro-states and six micro-states.}
    \label{fig:energyLandscape}
\end{center}
\end{figure}
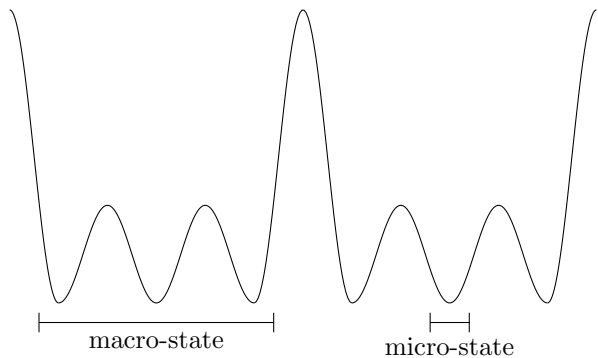

In some situations the energy landscape exhibits barriers of different heights, as in Figure~\ref{fig:energyLandscape}, wherein micro-states connected by smaller barriers equilibrate rapidly compared to transitions separated by large barriers. This leads to groups of micro-states, lumped together into so-called \emph{macro-states}, that exhibit quick internal equilibration within the group compared to slow transition across different macro-states. On sufficiently long time scales, the original Markov chain dynamics can be described by a reduced Markov chain on the macro-state space. This reduction is justified due to presence of pronounced disparities in the transition rates (or barrier heights) and underlies multiscale simulation and clustering/lumping approaches to Markov chains~\cite{CaoGillespiePetzold05,ELiuEijnden05,SchutteFischerHuisingaDeuflhard99}. Mathematically, these disparate transition rates correspond to the presence of explicit scale-separation in the system and considerable mathematical literature has been devoted to limit passage and averaging in multiscale jump processes, see~\cite{PavliotisStuart08,lahbabi13,zhang16,HilderPeletierSharmaTse20,MielkeStephan20,PeletierRenger20,landim2025} for a non-exhaustive list. 
While various reduction (or coarse-graining) approaches exist in the presence of pronounced rate disparities, coarse-graining to lower-dimensional Markov chains remains essential even when such disparities are not sharp, typically due to large system sizes. 
From this viewpoint, clustering even without explicit scale separation can be interpreted as constructing reduced Markov chains that approximate the slower components of the original dynamics.
These ideas have inspired various spectral and variational approaches to constructing clusters~\cite{DeuflhardWeber05,KubeWeber07}. While these techniques provide useful practical tools to cluster states together, they do not provide any error estimates to ascertain the quality of their approach. 

Inspired by coarse-graining for diffusion processes~\cite{legollLelievre10,Chorin2003}, the first and third author recently provided a systematic construction of a reduced Markov chain given any clustering. Furthermore, this work provides an error estimate comparing the reduced chain to the original chain under the assumption of a log-Sobolev inequality~\cite[Theorem 3.1]{HilderSharma24}. In this work, the LSI appeared as a technical assumption, and its interpretation was not transparent since the underlying dynamics was non-reversible and the reference measure which satisfied the LSI was not the steady state of the underlying generator~\cite[Remark 3.5 \& Section 6]{HilderSharma24}. 

With the generalised framework introduced in this paper, we can remove the log-Sobolev assumption entirely and obtain improved results both in the presence and in the absence of explicit scale separation in the system; see Proposition~\ref{prop:error-estimate-no-scale-sep} and Theorem~\ref{thm:error-estimate-ms} below. In the following, we consider coarse-grained Markov chains in two complementary regimes: Section~\ref{sec:estimates-without-scale-sep} treats coarse-grained Markov chains in the absence of explicit scale separation, illustrating how the framework applies to large systems where reduction is required for computational tractability rather than asymptotic reasons. In contrast, Section~\ref{sec:error-estimates-with-scale-sep} studies systems with explicit scale separation
where the reduced dynamics admit a clear multiscale interpretation. 

Moreover, in Section~\ref{sec:different-cg-maps}, we show how the generalised LSI and its associated lower bounds can be used to compare different clusterings, providing a quantitative notion of coarse-graining \emph{quality}. This was already suggested in \cite{HilderSharma24} in an example for a specific reversible Markov chain, which allows the LSI constant to be calculated. With the framework developed here, we can give a much more general approach for evaluating the quality of the coarse-graining map using the lower bound \eqref{eq:lowBound} and the upper bound \eqref{eq:upper-bound-gLSI}. As we demonstrate in Section \ref{sec:different-cg-maps} for a non-reversible example, this provides guidance for selecting clusterings that lead to accurate reduced Markov chains (see discussion in Section~\ref{sec:discussion}).
In addition, with a quality score it becomes possible to try to obtain a good coarse-graining map from data by machine learning, see Section~\ref{sec:discussion} for more details.

\subsubsection{Clustering in Markov chains without explicit scale separation}\label{sec:estimates-without-scale-sep}

We first discuss coarse-graining and error estimates for a general Markov chain without explicit scale separation. Before stating the result, we briefly recapitulate the setting in \cite{HilderSharma24} to fix notation. We consider a continuous-time Markov chain on a finite state space $\X$ with an irreducible generator $M \in \R^{|\X| \times |\X|}$. Then, the corresponding forward Kolmogorov equation for the evolution of the probability density reads 
\begin{equation}\label{eq:full-forw-K}
    \begin{dcases}
        \dfrac{\dd}{\dd t} \mu = M^T \mu, \\
        \mu_{t = 0} = \mu_0.
    \end{dcases}
\end{equation}
Our goal is to define an appropriate reduced Markov chain on a smaller state space $\Y$, which is encoded via a so-called \emph{coarse-graining map}
\[\xi : \X \to \Y.\]
We denote the level sets of $\xi$ by $\Lambda_y \coloneqq \{\xi^{-1}(x) \,:\, x \in \X\}$. The coarse-grained (or projected) dynamics on $\Y$ is \emph{exactly} characterised by the push-forward of $\mu$ under the map $\xi$, i.e.\ $t \mapsto \hat{\mu}_t\coloneqq \xi_{\#}\mu_t \in \P(\Y)$ defined as
\begin{equation*}
    \hat{\mu}_t(y) = \sum_{x \in \Lambda_y} \mu_t(x)
\end{equation*}
for any $y \in \Y$. 
The evolution of the exact coarse-grained dynamics $\hat{\mu}_t$ is explicitly described by 
\begin{equation}\label{eq:pushFor-Gen}
    \begin{dcases}
        \dfrac{\dd}{\dd t} \hat{\mu}_t = \hat{M}_t^T \hat{\mu}_t \\
        \hat{\mu}_{t=0}(y) = \sum_{x \in \Lambda_y} \mu_0(x),
    \end{dcases} \quad \text{ with } \quad \hat{M}_t(y_1,y_2) \coloneqq \sum_{x_1 \in \Lambda_{y_1}, x_2 \in \Lambda_{y_2}} M(x_1,x_2) \mu_t(x_1 | y_1),
\end{equation}
see \cite[Lemma 2.4]{HilderSharma24} for a derivation. While the evolution of  $\hat{\mu}_t$ is the exact dynamics induced by the full dynamics $\mu_t$ under the coarse-graining map $\xi$, it is impractical in applications since the generator is time-dependent and, in fact, requires full knowledge of the full dynamics $\mu_t$. 

Instead, motivated by recent developments in computational statistical mechanics~\cite{legollLelievre10,Chorin2003}, an \emph{approximate effective} dynamics $t \mapsto \eta_t \in \P(\Y)$ is introduced in~\cite{HilderSharma24} which evolves according to
\begin{equation}\label{eq:eff-dyn-non-cg}
    \dfrac{\dd}{\dd t} \eff_t = N^T\eff_t, \ \ \text{ with } \ \  N(y_1,y_2)\coloneqq \sum_{x_1\in\Lambda_{y_1},x_2\in\Lambda_{y_2} } M(x_1,x_2)\stat(x_1|y_1),
\end{equation}
where $\rho(\cdot | y) \in \P(\Lambda_y)$ for $y \in \Y$ is the family of conditional measures corresponding to the full steady state $\rho \in \P(\X)$ of $M$, i.e.\  
\begin{align*}
\forall x\in \Lambda_y: \ \rho^\e(x|y)=\frac{\rho^\e(x)}{\sum_{x'\in \Lambda_y}\rho^\e(x') }. 
\end{align*}
To construct the effective dynamics, one only needs to know the full-space steady state $\rho$ rather than the full-space dynamics. This generator $N$ of the effective dynamics~\eqref{eq:eff-dyn-non-cg} mimics the coarse-grained generator $\hat M_t$~\eqref{eq:pushFor-Gen} with the crucial difference that the cluster dynamics (i.e.\ dynamics on the level-sets of $\xi$) has been equilibrated -- we refer interested readers to~\cite{HilderSharma24} for details. 

The key question in the investigation in \cite{HilderSharma24} is under what conditions this effective dynamics is a good approximation of the coarse-grained dynamics, and by extension, the full dynamics. For this setting, the first and third author provided an error estimate, which measures the error between the coarse-grained dynamics $\hat{\mu}_t$ and the effective dynamics $\eta_t$ in relative entropy, that is, it provides a bound on $\RelEnt_{\eta_t}(\hat{\mu}_t / \eta_t)$, see \cite[Theorem 3.1]{HilderSharma24} and \eqref{eq:error-est-non-cg}. To formulate this estimate, we define for any $y \in \Y$ the generator $M^y \in \R^{|\Y| \times |\Y|}$ as the restriction of $M$ to $\Lambda_y \times \Lambda_y$ with an appropriately modified diagonal such that $M^y$ is again a generator. The key assumption in \cite[Theorem 3.1]{HilderSharma24} is the existence of $\alpha > 0$ such that $\alpha \le \alpha_{\gLSI}(\rho(\cdot\vert y), M^y)$ for all $y \in \Y$. To provide such an $\alpha$, we introduce the symmetrised generators \comm{[complicated notation st we are aligned with the Mathematica code]}
\begin{equation}\label{eq:SymGen-Comp}
    M^{y,\rho(\cdot\vert y)} \coloneqq \dfrac{1}{2} (M + D_{\rho(\cdot \vert y)} M^T D_{\rho(\cdot \vert y)}^{-1}).
\end{equation}
Here we use the notation $D_{\rho(\cdot \vert y)} = \mathrm{diag}(\rho(\cdot | y))$, i.e., the diagonal matrix with the entries of $\rho(\cdot | y)$ on its diagonal. Note that the second term in the right-hand side of~\eqref{eq:SymGen-Comp} is the explicit form of the adjoint to $M$ in $L^2(\zeta)$, which is precisely the second term  appearing in the symmetrised generator $M^\zeta$ \eqref{eq:symGen}.
Then, applying Theorem \ref{thm:pos}, we find that
\begin{equation}\label{eq:alpha}
    \alpha \coloneqq \min_{y \in \Y} \dfrac{\lambda_y}{2 + \log\dfrac{1}{\rho(\cdot | y)_*}}
\end{equation}
with $\lambda_y$ the spectral gap of $M^{y,\rho(\cdot\vert y)}$
and $\rho(\cdot | y)_*$ the minimum of $\rho(\cdot | y)$, satisfies $\alpha \le \alpha_{\gLSI}(\rho(\cdot\vert y), M^y)$ for all $y \in \Y$. Recall that Theorem \ref{thm:pos} also provides a more explicit lower bound which does not require the calculation of the spectral gap. Although both lower bounds in Theorem \ref{thm:pos} yield strict positivity of the $\alpha_{\gLSI}(\rho(\cdot\vert y), M^y)$ for all $y \in \cY$, we use the sharper lower bound given by $\alpha$ here. A direct application of \cite[Theorem 3.1, Remark 3.2]{HilderSharma24} then yields the following result. 

\begin{prop}\label{prop:error-estimate-no-scale-sep}
    Let $\mu \in C^1([0,\infty);\P(\X))$ be a solution to the full forward Kolmogorov equation \eqref{eq:full-forw-K} with corresponding coarse-grained dynamics $\hat{\mu}$ and let $\eta \in C^1([0,\infty); \P(\Y))$ be the solution to the effective dynamics \eqref{eq:eff-dyn-non-cg} with initial condition $\hat{\mu}_0,\eff_0 \in \P_+(\Y)$ satisfying $\hat{\mu}_0(y), \eta_0(y) \geq c_0$ for some $c_0 > 0$ and all $y \in \Y$. Then, for any fixed $T > 0$ there exists a constant $C = C(L,|\X|,T,c_0)$ such that
    \begin{equation}\label{eq:error-est-non-cg}
        \begin{split}
            \RelEnt_{\eta_t}\left(\dfrac{\hat{\mu}_t}{\eta_t}\right) &\leq 2\RelEnt_{\eta_0}\left(\dfrac{\hat{\mu}_0}{\eta_0}\right) + \dfrac{C}{\min_y(\alpha_{\gLSI}(\rho(\cdot |y), M^y))} \left[\RelEnt_\rho\left(\dfrac{\mu_0}{\rho}\right) - \RelEnt_\rho\left(\dfrac{\mu_t}{\rho}\right)\right]
        \end{split}
    \end{equation}
    for all $t \in [0,T]$. In particular, the implicit prefactor $\min_y(\alpha_{\gLSI}(\rho(\cdot |y), M^y))$ can be replaced by the explicit constant $\alpha > 0$ given in \eqref{eq:alpha}.
\end{prop}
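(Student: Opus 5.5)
The plan is to obtain \eqref{eq:error-est-non-cg} as a direct specialisation of \cite[Theorem 3.1, Remark 3.2]{HilderSharma24}, with the only new input being the verification of its log-Sobolev hypothesis via Theorem~\ref{thm:pos}. First I recall that \cite[Theorem 3.1]{HilderSharma24} is stated under the assumption that there is some $\alpha>0$ with $\alpha\le \alpha_{\gLSI}(\rho(\cdot\vert y),M^y)$ for every $y\in\Y$. Its conclusion is exactly the estimate \eqref{eq:error-est-non-cg} with $\alpha$ in the denominator, for $t\in[0,T]$ and with a constant $C$ depending only on the generator, $|\X|$, $T$ and the lower bound $c_0$ on the initial data. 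Remark 3.2 there allows the initial conditions to be merely bounded below by $c_0$ rather than equal.

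The key step is therefore to check that the hypothesis holds, and in fact with the optimal choice $\alpha=\min_y \alpha_{\gLSI}(\rho(\cdot\vert y),M^y)$. For each $y\in\Y$ I will apply Theorem~\ref{thm:pos} on the state space $\Z=\Lambda_y$ with generator $M^y$ and reference measure $\zeta=\rho(\cdot\vert y)$. This requires two facts.
\begin{enumerate}
\item $\rho(\cdot\vert y)\in\P_+(\Lambda_y)$. This holds since $\rho$ is strictly positive by irreducibility of $M$.
\item $M^y$ is irreducible on $\Lambda_y$. This is also an implicit standing assumption of \cite{HilderSharma24} for the restricted generators, and I would state it explicitly.
\end{enumerate}
If $|\Lambda_y|=1$, the level set is a singleton and the conditional dynamics is trivial. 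The corresponding term simply drops out, or is treated with the convention $\alpha_{\gLSI}=+\infty$.

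Theorem~\ref{thm:pos}, via \eqref{eq:lowBound} and Lemma~\ref{lem:symGen}, gives
\[
\alpha_{\gLSI}(\rho(\cdot\vert y),M^y)\ \ge\ 2\alpha_{\sLSI}\bigl(M^{y,\rho(\cdot\vert y)}\bigr)\ \ge\ \frac{\lambda_y}{1+\frac12\log\frac{1}{\rho(\cdot\vert y)_*}}\ \ge\ \frac{\lambda_y}{2+\log\frac{1}{\rho(\cdot\vert y)_*}}>0 .
\]
Here I identify the symmetrised generator $M^\zeta$ of \eqref{eq:symGen} with the matrix expression \eqref{eq:SymGen-Comp}. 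The entries agree off the diagonal, and both have zero row sums, because $\zeta$ is stationary for $M^\zeta$ and $(D_\zeta M^TD_\zeta^{-1})\1 = D_\zeta M^T\zeta/\zeta$ needs the diagonal fix-up. I will take this convention from \eqref{eq:symGen}. Taking the minimum over the finitely many $y$ shows that $\min_y\alpha_{\gLSI}(\rho(\cdot\vert y),M^y)\ge\alpha>0$ with $\alpha$ as in \eqref{eq:alpha}.

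Plugging $\min_y\alpha_{\gLSI}$ into \cite[Theorem 3.1]{HilderSharma24} then yields \eqref{eq:error-est-non-cg}. Since the right-hand side is decreasing in the prefactor's denominator and the entropy difference is nonnegative (relative entropy to $\rho$ is nonincreasing along \eqref{eq:full-forw-K}), replacing $\min_y\alpha_{\gLSI}$ by the smaller $\alpha$ preserves the inequality. The main obstacle I expect is bookkeeping rather than mathematics. It consists of confirming that the gLSI appearing in \cite{HilderSharma24} coincides with \eqref{def:genLSI}, that is, with the generalised Fisher information \eqref{def:genFisher} relative to the non-stationary conditional measure, and not a classical or $\lambda$-variant as in Remark~\ref{rem:alternative-generalistion}. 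If it were the $\RF^\lambda$ version, I would instead rerun the proof of Theorem~\ref{thm:pos} using convexity of $x\mapsto x-1-\frac1\lambda(x^\lambda-1)$ together with a comparison of the form $x-1-\frac1\lambda(x^\lambda-1)\ge c_\lambda(\sqrt x-1)^2$.
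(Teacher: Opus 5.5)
Your proposal is correct and follows the paper's own argument: the paper also verifies the uniform gLSI hypothesis of \cite[Theorem 3.1, Remark 3.2]{HilderSharma24} on each level set by applying Theorem~\ref{thm:pos} to the pair $(\rho(\cdot\vert y),M^y)$, and then invokes that result directly. Your extra remarks are sound refinements of points the paper leaves implicit: that each $M^y$ must be irreducible on $\Lambda_y$ for Theorem~\ref{thm:pos} to apply; that \eqref{eq:alpha} is a factor $2$ weaker than what \eqref{eq:lowBound} gives; and that \eqref{eq:SymGen-Comp} should be read as the $L^2(\zeta)$-adjoint $D_\zeta^{-1}M^TD_\zeta$ completed with the diagonal of \eqref{eq:symGen}, where the row sums $M^T\zeta/\zeta$ are the ones that need the fix-up (not $D_\zeta M^T D_\zeta^{-1}\1$ as you wrote).
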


\begin{rem}
    As pointed out in \cite[Remark 3.2]{HilderSharma24}, it is unclear if the estimate \eqref{eq:error-est-non-cg} can be proven for $T = \infty$. However, one can obtain the weaker estimate
    \begin{equation*}
        \RelEnt_{\eta_t}\left(\dfrac{\hat{\mu}_t}{\eta_t}\right) \leq \RelEnt_{\eta_0}\left(\dfrac{\hat{\mu}_0}{\eta_0}\right) + \dfrac{C}{\sqrt{\alpha}} \left[\RelEnt_\rho\left(\dfrac{\mu_0}{\rho}\right) - \RelEnt_\rho\left(\dfrac{\mu_t}{\rho}\right)\right]^{\tfrac{1}{2}}
    \end{equation*}
    for all $t > 0$ with a constant $C$ now depending on $L, N$ and $\rho$, see \cite[Theorem 3.1]{HilderSharma24}. In fact, this estimate even holds without assuming strictly positive initial data.
\end{rem}

We should point out that it is possible to construct another natural reduced dynamics on $\Y$ inspired by multiscale problems in Markov chains (see Section~\ref{sec:error-estimates-with-scale-sep} for this connection). The following remark briefly discusses this reduced dynamics in the the absence of scale separation.
\begin{rem}\label{rem:Aver-NoScale}
    The restriction $M^y\in\R^{|\Y|\times|\Y|}$ of the generator $M$ to $\Lambda_y\times\Lambda_y$ is assumed to be a generator in the setting of this section, which can always be guaranteed by modifying the diagonal elements of the restriction. If we additionally assume that $M^y$ is irreducible for every $y\in\Y$, then there exists $\rho_y\in\P(\Y)$ such that $(M^y)^T\rho_y=0$, i.e., $\rho_y$ is the steady state of the restricted generator $M^y$. Note that, in the non-reversible case, the conditional steady state $\rho(\cdot|y)$ is generally not the same as the steady state $\rho_y$ within each cluster (see Section~\ref{sec:different-cg-maps} for one such explicit example). Following the same strategy as in the construction of effective dynamics above, we can introduce the reduced dynamics 
    \begin{equation*}
    \dfrac{\dd}{\dd t} \aver_t = (M^{\av})^T\aver_t, \ \ \text{ with } \ \  M^{\av}(y_1,y_2)\coloneqq \sum_{x_1\in\Lambda_{y_1},x_2\in\Lambda_{y_2} } M(x_1,x_2)\rho_{y_1}(x_1).
\end{equation*}
    Here the $\aver_t$ refers to the \emph{averaged dynamics} in multiscale problems, see~\eqref{eq:averaged_dynamics} below. While this dynamics is traditionally used only in the context of multiscale problems, it remains well-defined even in the absence of explicit scale separation, as is the case here. However, our proof techniques from Proposition~\ref{prop:error-estimate-no-scale-sep} do not carry over to this averaged reduced dynamics. Specifically, the proof of \cite[Lemma 3.8]{HilderSharma24} fails since the push-forward steady state $\xi_\# \rho$ is generally not the steady state of $M^{\av}$. This points to the main issue that, in general, there is no reason why the coarse-grained generator $\hat M_t$~\eqref{eq:pushFor-Gen} should be close to $M^{\av}$. This is in stark contrast to the effective generator $N$, see \eqref{eq:eff-dyn-non-cg}, which is the limit of $\hat{M}_t$ for $t \to \infty$. A notable exception to this issue are reversible Markov chains where $\rho_y(\cdot)=\rho(\cdot|y)$, see \cite[Lemma 4.9]{HilderSharma24}, 
    and therefore the effective and averaged dynamics coincide.  
\end{rem}

\subsubsection{Clustering in Markov chains with explicit scale separation}\label{sec:error-estimates-with-scale-sep}

The estimate \eqref{eq:error-est-non-cg} is useful in particular if $\alpha$ is large. 
To illustrate this, we now consider the error estimates for a system with explicit scale separation, which is reflected in a small model parameter $\eps$. Specifically, we consider the behaviour of a particle moving between the micro-states within an energy landscape with energy barriers of vastly different sizes, similar to Figure~\ref{fig:energyLandscape}. We model this as a Markov jump process on a discrete state space $\X = \Y\times \Z$, where $\Y$ labels the macro-state and $\Z$ labels the micro-state within a particular macrostate. We will make two assumptions throughout:
\begin{itemize}
    \item there are only two macro-states i.e.\ $\Y = \{0,1\}$;
    \item all macro-state contains equal number of micro-states i.e.\ $\Z = \{0, \dots, n-1\}$ with $n \geq 1$. 
\end{itemize}
This is reflected in Figure~\ref{fig:energyLandscape}, where the state space (which labels each basin) is given by $\X=\Y\times\Z\coloneqq \{0,1\}\times \{0,1,2\}$. 
Both these assumptions have been made purely for the sake of notational simplicity and all the following results can straightforwardly be extended to multiple macro-states containing varying number of micro-states. We refer to \cite[Section 6]{HilderSharma24} for a more detailed discussion.

To make these ideas concrete, consider a family of Markov chains parametrised by $\e>0$ with corresponding forward Kolmogorov equations 
\begin{equation}\label{eq:eps-KolEq}
\begin{dcases}
\dfrac{\dd}{\dd t} \mu^\e = (L^\e)^T \mu^\e, \\
\mu^\e_{t = 0} = \mu_0^\eps,
\end{dcases}
\end{equation}
on $\X = \Y \times \Z$, where $ \mu^\e_t$ is the probability distributions of the Markov chain with the initial datum $\mu_0^\eps$. The generator $L^\e$ is assumed to admit the form 
\[
L^\e= \frac{1}{\e}Q + \sG,
\]
where $Q, G$ are $\e$-independent generators, and $\varepsilon > 0$ is a small parameter that represents the ratio of the two time scales.
In the language of Markov chains, $Q$ describes the $\cO(\frac1\e)$ fast dynamics on the micro-states $\Z$ belonging to a single macro-state $y \in \Y$, and $G$ describes the $\cO(1)$ slow dynamics of jumps between different macro-states $y,y' \in \Y$. 
More precisely,
we further decompose $Q, G$ into the matrices $Q_y,D_y, G_{y,1-y}\in\R^{n\times n}$ for $y\in\Y$ as
\begin{equation}\label{eq:CG-res-gen}
L^\e= \frac{1}{\e}Q + \sG \coloneqq \frac{1}{\e}\begin{pmatrix}Q_0 & 0 \\ 0 & Q_1\end{pmatrix} +  \begin{pmatrix}D_0 & \sG_{0,1} \\
\sG_{1,0} & D_1 \end{pmatrix},
\end{equation}
i.e.\ with (writing $x=(y,z) \in \cX$)
\[
Q((y,z),(y',z')) = \begin{cases}
Q_y(z,z') &\text{if\, $y'=y$}\\
0 &\text{otherwise}
\end{cases},\qquad \sG((y,z),(y',z')) = \begin{cases}
D_y(z,z') & \text{if $y' = y$} \\ 
\sG_{y,y'}(z,z') &\text{otherwise}
\end{cases}
\]
and $D_y$ diagonal matrices.
The matrix $Q_y\in\R^{n\times n}$ encodes the fast jumps between micro-states within the $y$-th macro-state. The matrix $G_{y,1-y}\in\R^{n\times n}$ encodes the slow transition from the $y$-th macro-state to the $(1-y)$-th macro-state. Finally, $D_y$ ensures that $G$ is a generator. 

Since $L^\e, Q, G$ are generators and $Q, G$ are independent of $\e$, they satisfy
\begin{align*} %\label{CG-L-ass} 
\forall x\in\X &: \ \sum_{x'\in\X}Q(x,x') =0=\sum_{x'\in\X} \sG(x,x'),
\\ %\label{eq-def:D}
\forall z\in\Z &: \ D_y(z) = -\sum_{z'\in\Z} \sG_{y,1-y}(z,z').
\end{align*}
We assume that $L^\e$ is irreducible, and therefore~\eqref{eq:eps-KolEq} admits a stationary solution $\stat^\e\in\P_+(\X)$. Additionally, we assume that $Q_0$ and $Q_1$ are irreducible generators as well. Consequently, the dynamics driven by $Q_y$ on $\Z$ for $y=0,1$ admit stationary measures $\stat_y \in\P_+(\Z)$.

This setting is of coarse-graining type in the sense 
that for $0<\e\ll 1$ the dynamics within the macro-state equilibrates, which allows the reduction to a jump process on $\Y$ in the limit $\eps \rightarrow 0$, i.e.\ we expect that only relevant dynamics in the limit is between macro-states.

To make this precise, we introduce a coarse-graining map as the projection onto the slow variables encoded in $\Y$
\begin{equation}\label{eq:CGDyn-def}
\xi:\X\rightarrow\Y \ \ \text{with} \ \ \xi (y,z) = y. 
\end{equation} 
As in the case without explicit scale separation, we describe the slow coarse-grained dynamics by $t\mapsto \hat\mu^\e_t \coloneqq \xi_\#\mu^\e_t \in \P(\Y)$ defined as
\begin{equation*}
\forall y\in \Y: \ \  \hat\mu^\e_t(y) \coloneqq \sum_{z\in \Z}\mu^\e_t((y,z)).  
\end{equation*}
We use $\Lambda_y \coloneqq \xi^{-1}(y) \subset \X$ to describe the the `copy' of $\Z$ that belongs to a macro-state $y \in \Y$. 

Deriving the limiting dynamics as $\eps\to 0$ has received considerable attention in the literature. Under some mild assumptions, the solution to the coarse-grained dynamics converges to the averaged dynamics, i.e.\ $\cg^\e\to\aver$, where $t\mapsto \aver_t\in \P(\Y)$ solves
\begin{equation}\label{eq:averaged_dynamics}
    \dfrac{\dd}{\dd t} \aver_t = \bigl( L^{\av}\bigr)^T\aver_t,
\end{equation}
see~\cite[Chapter 16]{PavliotisStuart08} for convergence of backward equations,~\cite[Theorem 1]{lahbabiLegoll13} for a martingale approach and~\cite[Section 3]{HilderPeletierSharmaTse20} for a variational approach.
Here the limiting generator $L^{\av}\in \mathbb R^{|\Y|\times |\Y|} = \R^{2 \times 2}$ is defined as  
\begin{equation}\label{eq:aver-gen}
L^{\av}\coloneqq \begin{pmatrix}
-\lambda_0 & \lambda_0  \\ \lambda_1 & -\lambda_1
\end{pmatrix}, \ \ 
\lambda_y \coloneqq \sum_{z,z'\in\Z} \stat_y(z) \sG_{y,1-y}(z,z'),
\end{equation}
where $\stat_y$ is the stationary measure corresponding to $Q_y$. Note that this coincides with the reduced dynamics proposed in Remark~\ref{rem:Aver-NoScale} in the absence of scale separation.

While these results describe the case of `infinite scale separation', as discussed in the case without explicit scale separation, it is usually more relevant in applications to obtain a reduced model in the case of small, but finite, $\eps > 0$, or even for settings without explicit scale separation. Motivated the previous Section \ref{sec:estimates-without-scale-sep}, we again define the effective dynamics $t\mapsto \eff_t^\e \in\P(\Y)$ as
\begin{equation}\label{eq:eff}
\dfrac{\dd}{\dd t} \eff_t^\e = \bigl( N^\e \bigr)^T\eff_t^\e, \ \ \text{ with } \ \  N^\e(y_1,y_2)\coloneqq \sum_{x_1\in\Lambda_{y_1},x_2\in\Lambda_{y_2} } L^\e(x_1,x_2)\stat^\e(x_1|y_1),
\end{equation}
where $\stat^\e(\cdot|y)\in\P(\Lambda_y)$ are the conditional measures corresponding to the full steady state $\stat^\e$. We point out that $N^\eps$ is an irreducible generator with steady state $\hat{\rho}^\eps$ given by
\begin{equation*}
    \hat{\rho}^\eps(y) \coloneqq \sum_{x \in \Lambda_y} \rho^\eps(x | y)
\end{equation*}
for all $y \in \Y$, see~\cite[Lemma 2.5, Proposition 2.6]{HilderSharma24}. 

We note that the effective dynamics converges to the averaged dynamics, i.e.\ $\eff^\e\to \aver$ as $\e\to 0$~\cite[Theorem 4.4]{HilderSharma24}. More interestingly, the effective dynamics is indeed a good approximation of the true slow-variable dynamics~\cite[Theorem 4.5]{HilderSharma24}, i.e.\ under suitable assumptions we find
\begin{align*} %\label{eq:error-estiamte-HS22}
 \sup_{t \in [0,T]} \RelEnt_{\eta_t^\eps}\left(\dfrac{\cg_t^\e}{\eta_t^\eps}\right) \leq C \eps.
\end{align*}
The main assumption is the existence of a uniform log-Sobolev inequality, that is, there exists an $\alpha > 0$ such that $\alpha < \alpha_{\gLSI}(\rho^\eps(\cdot \vert y), Q_y)$ for all $\eps \in (0,1)$ and each $y \in \Y$. This is similar to the case without explicit scale separation, but we additionally require uniformity in $\eps \in (0,1)$.
Note that similar assumptions also appear in the coarse-graining literature on diffusion processes~\cite{legollLelievre10, ZhangHartmannSchutte16, DLPSS18, HartmannNeureitherSharma20} for the log-Sobolev inequality and~\cite{LegollLelievreOlla17,LegollLelievreSharma18,LelievreZhang18} for the Poincar\'e inequality. 
Using the framework introduced in this paper, we can remove this assumption entirely, as we will show in the remainder of this section.

We stress that for non-reversible processes $L^\eps$, the conditional steady-state $\rho^\eps(\cdot|y)$ is in general not the steady state of the generator $Q_y$ on the level-set $\Lambda_y$ since $\rho^\eps(\cdot|y)$ generically depends on $\eps$ while $Q_y$ does not, see Section \ref{sec:different-cg-maps} for an explicit example. However, if $L^\eps$ is reversible, its stationary measure $\rho^\eps$ is $\eps$-independent, see~\cite[Lem.~4.9]{HilderSharma24}, and $\rho^\eps(\cdot|y)$ is the steady state of $Q_y$.  

The existence and interpretation of such an $\alpha > 0$ 
was left as an open question in \cite{HilderSharma24}. Now, with the non-equilibrium functional inequalities developed in this paper, we can answer it.
For this, we now apply the general set-up to the scale-separated Markov chain with generator $L^\eps$, see \eqref{eq:CG-res-gen}, and obtain the result, which removes the assumption of a uniform log-Sobolev constant in \cite[Thm.~4.5]{HilderSharma24}.

\begin{theorem}\label{thm:error-estimate-ms}
    Let $Q, G$ be as defined in \eqref{eq:CG-res-gen}. Then, there exist constants $\eps_0, c_0, C_0, C_1,C_2, \beta > 0$ such that the following result holds. For $\eps \in (0,\e_0)$ let $t \mapsto \mu^\eps_t \in \P(\X)$ and $t \mapsto \eta^\eps_t\in \P(\Y)$ be the solutions to
    \begin{equation*}
        \begin{dcases}
            \dfrac{\dd}{\dd t} \mu^\eps_t = (L^\eps)^T \mu_t^\eps, \\
            \mu_{t = 0}^\eps = \mu_0^\eps,
        \end{dcases} \qquad \begin{dcases}
            \dfrac{\dd}{\dd t} \eta^\eps_t = (N^\eps)^T \eta_t^\eps, \\
            \eta^\eps_{t = 0} = \eta_0^\eps,
        \end{dcases}
    \end{equation*}
    where we assume the following on the initial conditions $\mu_0^\eps, \eta_0^\eps$: 
    \begin{itemize}
        \item (strictly positive initial data) $\hat{\mu}_0^\eps(x) \geq c_0$ and $\eta_0^\eps(y) \geq c_0$ for all $\eps \in (0, \eps_0)$ and for all $x \in \mathcal{X}$ and all $y \in \mathcal{Y}$ respectively,
        \item (convergence) $\mu^\eps_0$, $\hat{\mu}_0^\eps = \xi_\#\mu^\eps_0$ (see \eqref{eq:CGDyn-def}) and $\eta_0^\eps$ converge in $\P(\mathcal{Y})$ as $\eps \rightarrow 0$, 
        and
        \item (relative entropy estimate) for all $\eps \in (0,\eps_0)$
        \begin{equation*} %\label{eq:ms-convergence-init}
        \RelEnt_{\eta_0^\eps}\left(\frac{\hat{\mu}_0^\eps}{\eta_0^\eps}\right) \leq C_0 \eps.
    \end{equation*}
    \end{itemize}
    Then, the estimate
    \begin{equation}\label{eq:better-error-estimate-ms}
        \RelEnt_{\eta_t^\eps} \left(\frac{\hat{\mu}_t^\eps}{\eta_t^\eps}\right) \leq \eps C_1 e^{-\beta t}
    \end{equation}
    holds for all $t \geq 0$ and all $\eps \in (0,\eps_0)$. 
    In particular, the time-uniform estimate
    \begin{equation}\label{eq:better-error-estimate-ms-uniform}
        \sup_{t \geq 0} \RelEnt_{\eta_t^\eps} \left(\frac{\hat{\mu}_t^\eps}{\eta_t^\eps}\right) \leq C_2 \eps
    \end{equation}
    holds for all $\eps \in (0,\eps_0)$.
\end{theorem}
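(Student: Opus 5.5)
The proof will not redo the coarse-graining analysis. It reuses the error estimate of \cite[Theorem~4.5]{HilderSharma24} and supplies its one missing hypothesis, the uniform generalised LSI, from Theorem~\ref{thm:pos} and Theorem~\ref{thm:continuity-alpha-LSI}. The exponential-in-time improvement then comes from Proposition~\ref{prop:long-term-est1}, applied on the macro-state space to the effective dynamics $N^\e$.

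\textbf{Step 1: a uniform gLSI constant.} We need $\alpha>0$ with $\alpha\le\alpha_{\gLSI}(\rho^\e(\cdot|y),Q_y)$ for all $\e\in(0,\e_0)$ and $y\in\Y$. By Theorem~\ref{thm:pos},
\[
  \alpha_{\gLSI}(\rho^\e(\cdot|y),Q_y)\;\ge\;\frac{\rho^\e(\cdot|y)_*\,(Q_y)_*\,n^{-1}}{1+\frac12\log\bigl(1/\rho^\e(\cdot|y)_*\bigr)},
\]
where $(Q_y)_*>0$ does not depend on $\e$. It therefore suffices to bound $\rho^\e(\cdot|y)_*$ from below uniformly in $\e$. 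For this I would show that $\rho^\e\to\rho^0$ as $\e\to0$, where $\rho^0(y,z)=\hat\rho^0(y)\rho_y(z)$ and $\hat\rho^0$ is the steady state of $L^{\av}$.

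To prove the convergence, multiply $(L^\e)^T\rho^\e=0$ by $\e$. Any limit point $\bar\rho$ of $\rho^\e$ then satisfies $Q^T\bar\rho=0$, so $\bar\rho(y,\cdot)=c_y\rho_y$ by irreducibility of $Q_y$. Summing the equation $(L^\e)^T\rho^\e=0$ over each block $\Lambda_y$ removes the $Q$-terms, because $Q$ is block diagonal with zero row sums. What remains is $(L^{\av})^T c=0$, and uniqueness gives $c=\hat\rho^0$. Here I need $L^{\av}$ irreducible, i.e.\ $\lambda_0,\lambda_1>0$; this follows from irreducibility of $L^\e$ together with $\rho_y>0$.

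Hence $\rho^\e(\cdot|y)\to\rho_y$, which is strictly positive, so $\rho^\e(\cdot|y)_*\ge\frac12\min_y(\rho_y)_*$ for $\e<\e_0$. (Alternatively, Theorem~\ref{thm:continuity-alpha-LSI} gives $\alpha_{\gLSI}(\rho^\e(\cdot|y),Q_y)\to\alpha_{\LSI}(Q_y)>0$ directly.)

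\textbf{Step 2: the $\cO(\e)$ estimate on finite time intervals.} With the uniform $\alpha$ from Step~1, all hypotheses of \cite[Theorem~4.5]{HilderSharma24} hold, so $\sup_{t\in[0,T]}\RelEnt_{\eta^\e_t}(\hat\mu^\e_t/\eta^\e_t)\le C\e$ for a fixed $T$. The strict positivity and convergence assumptions on the initial data are exactly the remaining inputs of that theorem. I would track its constant to make sure it depends only on $T$, $c_0$, $C_0$, $\alpha$ and the generators, not on $\e$.

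\textbf{Step 3: exponential decay in time.} This is the step I expect to be the main obstacle, because $\hat\mu^\e_t$ is \emph{not} a solution of the effective dynamics, so Proposition~\ref{prop:long-term-est1} does not apply to the pair $(\hat\mu^\e_t,\eta^\e_t)$ directly. I would first try the triangle-type route through the steady states. The measure $\hat\mu^\e_t$ converges to $\hat\rho^\e=\xi_\#\rho^\e$ exponentially, at a rate uniform in $\e$. The measure $\eta^\e_t$ converges to $\hat\rho^\e$, the steady state of $N^\e$, at a rate given by Proposition~\ref{prop:long-term-est1} for the two-state generator $N^\e\to L^{\av}$; by Remark~\ref{rem:continuity-gLSI-with-generator} and the lower bounds this rate is uniform in $\e$.

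The difficulty is that the convergence of $\hat\mu^\e_t$ alone gives only a decay $e^{-\beta t}$ without the factor $\e$. So I would combine the two bounds by interpolation: use the $\cO(\e)$ bound from Step~2 on $[0,T]$ and the decay bound $Ce^{-\beta t}$ for $t\ge T$. The constant $C_1$ then absorbs $e^{\beta T}$ on the first interval. For large times I need the decay itself to carry the factor $\e$, i.e.\ $\RelEnt_{\hat\rho^\e}(\hat\mu^\e_t/\hat\rho^\e)\lesssim \e e^{-\beta t}$ and similarly for $\eta^\e_t$.

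To obtain this I would rerun the entropy dissipation argument of \cite{HilderSharma24} with the generalised LSI of $N^\e$ with respect to $\eta^\e_t$. Here $\alpha_{\gLSI}(\eta^\e_t,N^\e)\ge\alpha_*$ uniformly, by the lower bound \eqref{eq:lower-bound-repetition} and positivity of $\eta^\e_t$. This yields a Gronwall inequality of the form
\[
  \tfrac{\dd}{\dd t}\RelEnt_{\eta^\e_t}\le-\alpha_*\RelEnt_{\eta^\e_t}+C\e\, e^{-\gamma t},
\]
where the source term comes from the mismatch $\hat M_t-N^\e$, which is controlled by the decaying full-space entropy. Solving this inequality gives \eqref{eq:better-error-estimate-ms} with $\beta<\min(\alpha_*,\gamma)$, and \eqref{eq:better-error-estimate-ms-uniform} follows immediately.
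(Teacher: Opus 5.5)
Your Steps 1 and 2 are sound. The paper obtains the uniform gLSI by your parenthetical alternative ($\rho^\e(\cdot|y)\to\pi_y$ plus Theorem~\ref{thm:continuity-alpha-LSI}); your main route through Theorem~\ref{thm:pos} and your self-contained proof of $\rho^\e\to\rho^0$ works equally well. Your diagnosis at the start of Step~3 is also correct, and the paper's long-time step does not account for it. The paper writes \eqref{eq:time-derivative-for-error-estimate}, i.e.\ $\frac{\dd}{\dd t}\RelEnt_{\eta^\e_t}(\varphi_t)=-\RF_{\eta^\e_t}(\varphi_t)$ with $\varphi_t=\hat\mu^\e_t/\eta^\e_t$. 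It then bounds $\alpha_{\gLSI}(\eta^\e_t,N^\e)\geq\beta$ by joint continuity at $(\pi^{\mathrm{av}},L^{\mathrm{av}})$ and runs Gronwall from $T_1$, seeded by the finite-time bound. By \eqref{ddt:var:H:zeta}, that identity needs both measures to solve the same equation. Here $\hat\mu^\e_t$ is driven by $\hat M_t$ from \eqref{eq:pushFor-Gen}, and the exact derivative is
\begin{equation*}
\frac{\dd}{\dd t}\RelEnt_{\eta^\e_t}(\varphi_t)=-\RF_{\eta^\e_t}(\varphi_t,N^\e)+\sum_{y\neq y'}\hat\mu^\e_t(y)\bigl(\hat M_t-N^\e\bigr)(y,y')\log\frac{\varphi_t(y')}{\varphi_t(y)},
\end{equation*}
where $(\hat M_t-N^\e)(y,y')=\sum_{x\in\Lambda_y,\,x'\in\Lambda_{y'}}G(x,x')\bigl(\mu^\e_t(x|y)-\rho^\e(x|y)\bigr)$ for $y\neq y'$. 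So a source term is unavoidable, and your route genuinely differs from the paper's.

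The gap is in how you bound that source. It is governed by the conditional deviations $\mu^\e_t(\cdot|y)-\rho^\e(\cdot|y)$. These are $O(1)$ in the initial layer $t\lesssim\e$, because $\mu^\e_0$ is not assumed locally equilibrated, so a pointwise bound $C\e e^{-\gamma t}$ fails near $t=0$. On $[T,\infty)$ your justification (control by the decaying full-space entropy) cannot produce the factor $\e$, since $\RelEnt_{\rho^\e}(\mu^\e_t/\rho^\e)$ is $O(1)$. A linear Pinsker bound would in addition lose a square root. What closes the argument is the dissipation, not the entropy, in three steps.

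(i) Absorb the mismatch into a fraction $(1-\theta)$ of $\RF_{\eta^\e_t}(\varphi_t,N^\e)$ by Young's inequality. This uses uniform-in-$(t,\e)$ lower bounds on $\hat\mu^\e_t,\eta^\e_t$ and on $N^\e(y,y')$, $y\neq y'$. It leaves a quadratic remainder $C\sum_y\hat\mu^\e_t(y)\|\mu^\e_t(\cdot|y)-\rho^\e(\cdot|y)\|_{\ell^1}^2\leq 2C\sum_y\hat\mu^\e_t(y)\RelEnt_{\rho^\e(\cdot|y)}\bigl(\mu^\e_t(\cdot|y)/\rho^\e(\cdot|y)\bigr)$.

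(ii) By the uniform gLSI of Step~1 and the block identity $\RF_{\rho^\e}(\cdot,L^\e)=\e^{-1}\RF_{\rho^\e}(\cdot,Q)+\RF_{\rho^\e}(\cdot,G)$, bound this remainder by $\frac{C\e}{\alpha}\RF_{\rho^\e}(\mu^\e_t/\rho^\e,L^\e)=-\frac{C\e}{\alpha}\frac{\dd}{\dd t}h_t$, where $h_t\coloneqq\RelEnt_{\rho^\e}(\mu^\e_t/\rho^\e)$.

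(iii) In the Duhamel formula, integrate this source by parts against $e^{-\theta\alpha_*(t-s)}$. This needs $h_s\leq h_0e^{-\gamma s}$ uniformly in $\e$, which Theorem~\ref{thm:pos} gives for $(\rho^\e,L^\e)$. Indeed, with $Q_*,G_*$ the smallest positive entries of $Q,G$, one has $(L^\e)_*\geq\min\{Q_*,G_*\}$ for $\e\leq 1$, and $\rho^\e_*$ is bounded below by Step~1. Moreover $h_0\leq\log(1/\rho^\e_*)$.

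These steps give $\RelEnt_{\eta^\e_t}(\varphi_t)\leq C\e e^{-\beta t}$ for all $t\geq 0$, directly from $\RelEnt_{\eta^\e_0}(\varphi_0)\leq C_0\e$, with no need for Step~2. As written, your Step~3 does not establish the $\e e^{-\beta t}$ bound.
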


\begin{rem}
    We briefly comment on the first two assumptions in Theorem \ref{thm:error-estimate-ms}. The first assumption is consistent with the assumptions in Proposition \ref{prop:error-estimate-no-scale-sep}, and we add the uniformity in $\eps > 0$ to avoid degeneracy as $\eps \to 0$. The second assumption that $\hat{\mu}_0^\eps$ and $\eta_0^\eps$ converge as $\eps \to 0$ is used in the proof of \cite[Theorem 4.5]{HilderSharma24} to obtain that both the coarse-grained dynamics $\hat{\mu}_t^\eps$ and the effective dynamics $\eta_t^\eps$ converge to the $\eps$-independent averaged dynamics $\mu_t^{\av}$ given by \eqref{eq:averaged_dynamics} uniformly on a fixed, finite time interval $[0,T]$. This is then used to obtain an $\eps$-independent lower bound on $\hat{\mu}_t^\eps$ and $\eta_t^\eps$ on $[0,T]$. Additionally, we assume the convergence of the full initial data $\mu_0^\eps$, which is used in the proof of \cite[Theorem 4.5]{HilderSharma24} to guarantee convergence of the relative entropy of $\mu_0^\eps$ and $\rho^\eps$ as $\eps \to 0$. Since this is only needed to obtain an $\eps$-uniform bound on the relative entropy of $\mu_0^\eps$ and $\rho^\eps$, we conjecture that it can be removed. However, we still state it here for convenience.
\end{rem}

\begin{proof}
    We first prove that for all $T > 0$ 
    there exists a constant $C(T) > 0$ such that
    \begin{equation}\label{eq:finite-time-estimate}
        \sup_{t \in [0,T]} \RelEnt_{\eta_t^\eps} \left(\frac{\hat{\mu}_t^\eps}{\eta_t^\eps}\right) \leq C(T) \eps
    \end{equation}
    for all $\eps \in (0,\eps_0)$. This follows from an application of \cite[Theorem 4.5, Equation (50)]{HilderSharma24}. 
    For this, the only part left to check is  \cite[Thm.~4.5 (A2)]{HilderSharma24}, i.e.\  there exist $\eps_0 > 0$ and $\alpha > 0$ such that for all $\eps \in (0, \eps_0)$, $y \in \mathcal{Y}$ and $\nu \in \P(\mathcal{Z})$ it holds that
    \begin{equation*}
        \RelEnt_{\rho^\eps(\cdot | y)}\left(\frac{\nu}{\rho^\eps(\cdot | y)}\right) \leq \frac{1}{\alpha} \RF_{\rho^\eps(\cdot | y)}\left(\frac{\nu}{\rho^\eps(\cdot | y)},Q_y\right),
    \end{equation*}
    where $\RF_{\rho^\eps(\cdot | y)}$ is the generalised Fisher information with respect to the generator $Q_y$, see \eqref{eq:CG-res-gen}, and $\rho^\eps(\cdot | y)$ is the conditional measure to the full steady state $\rho^\eps$ of $L^\eps$. Thus, we need to show that $\alpha_{\gLSI}(\rho^\eps(\cdot | y), Q_y) \geq \alpha$ 
    for some $\alpha > 0$ independent of $\eps \in (0,\eps_0)$. To see this, we note that $\rho^\eps(\cdot | y) \rightarrow \pi_y$ as $\eps \rightarrow 0$, where $\pi_y$ is the positive stationary measure of the generator $Q_y$, see \cite[Lemma 3.3]{HilderPeletierSharmaTse20}.
    Since $ \alpha_{\gLSI}(\pi_y, Q_y) = \alpha_{\LSI}(Q_y) > 0$ for each $y \in \Y$, we have $\tilde{\alpha} \coloneqq \min_{y \in \Y} \alpha_{\gLSI}(\pi_y, Q_y)  > 0$. Then, the continuity of $\nu \mapsto \alpha_{\gLSI}(\nu, Q_y)$ for each $y$ shows that there exists an $\eps_0 > 0$ such that $\min_{y \in \Y} \alpha_{\gLSI}(\rho^\eps(\cdot | y), Q_y) \geq \tfrac{\tilde{\alpha}}{2} =: \alpha$ for all $\eps \in (0,\eps_0)$. Note that $\eps_0$ can be chosen uniformly in $y$ since $\mathcal{Y}$ is finite. Then, all assumptions of \cite[Theorem~4.5]{HilderSharma24} are satisfied, which yields \eqref{eq:finite-time-estimate}.

    Next, we establish a long-time estimate. Although this follows from similar arguments as in Section \ref{sec:application-long-term-estimates}, we give a detailed proof since we have to carefully track the $\eps$ dependence. We recall that
    \begin{equation}\label{eq:time-derivative-for-error-estimate}
        \dfrac{\dd}{\dd t} \RelEnt_{\eta_t^\eps} \left(\frac{\hat{\mu}_t^\eps}{\eta_t^\eps}\right) = - \RF_{\eta_t^\eps}\left(\frac{\hat{\mu}_t^\eps}{\eta_t^\eps}\right)  \leq - \alpha_{\gLSI}(\eta_t^\eps, N^\e) \RelEnt_{\eta_t^\eps} \left(\frac{\hat{\mu}_t^\eps}{\eta_t^\eps}\right).
    \end{equation}
    We now show that there exist $T_0 > 0$ and $\eps_0 > 0$ such that
    \begin{equation}\label{eq:cg-lower-bound-gLSI}
        \alpha_{\gLSI}(\eta_t^\eps, N^\eps) \geq \dfrac{1}{2}\alpha_{\gLSI}(\pi^\mathrm{av},L^\mathrm{av}) =: \beta > 0
    \end{equation}
    for all $t \geq T_0$ and all $\eps \in (0,\eps_0)$. The proof relies on the continuity of the map $(\zeta,M) \mapsto \alpha_{\gLSI}(\zeta,M)$ at $(\zeta,M) = (\pi^\mathrm{av},L^\mathrm{av})$, see Theorem \ref{thm:continuity-alpha-LSI} and Remark \ref{rem:continuity-gLSI-with-generator}, and the observation that the convergence $(\eta_t^\eps, N^\eps) \to (\pi^\mathrm{av},L^\mathrm{av})$ as $\eps \to 0$ and $t \to \infty$ is uniform with respect to the initial condition $\eta_0^\e$.
    To prove the latter, it is sufficient to show that for all $\delta > 0$ there exists $T_1>0$ large enough and $\eps_0>0$ small enough such that the estimate
    \begin{equation*}
        \|\eta_t^\eps - \pi^\mathrm{av}\|_{\TV} + \|N^\eps - L^\mathrm{av}\| < \delta
    \end{equation*}
    holds for any $t \geq T_1$ and any $\eps \in (0,\eps_0)$. First, we recall from \cite[Equation~(43)]{HilderSharma24} that
    \begin{equation}\label{eq:quant-estimate-diff-generators}
        \|N^\eps - L^\mathrm{av}\| \leq C \eps
    \end{equation}
    for all $\eps \in (0, \eps_0)$. Note that the argument below does not need the linear decay in $\eps$ and any decay would suffice. 
Second, we estimate
    \begin{equation*}
        \|\eta_t^\eps - \pi^\mathrm{av}\|_{\TV} \leq \|\eta_t^\eps - \hat{\rho}^\eps\|_{\TV} + \|\hat{\rho}^\eps - \pi^\mathrm{av}\|_{\TV},
    \end{equation*}
    where  $\hat{\rho}^\eps \in \P_+(\Y)$ is the steady state of $N^\eps$. 
    The first term is expected to be small for sufficiently large $t$ due to the convergence of $\eta_t^\eps$ to the steady state $\hat{\rho}^\eps$ and the second term to be small for sufficiently small $\eps > 0$ due to $\hat{\rho}^\eps \to \pi^\mathrm{av}$ as $\eps \to 0$, see \cite[Lemma~4.3]{HilderSharma24} for a qualitative proof of the latter. We now quantify these convergences.

    For the second term, we note that 
    \begin{equation*}
        \|\hat{\rho}^\eps - \pi^\mathrm{av}\|_{\mathrm{TV}} \leq C\eps
    \end{equation*}
    for all $\eps \in (0,\eps_0)$. Since this result is potentially of independent interest, we prove this estimate in the following separate Lemma \ref{lem:error-steady-states}.
    For the first term, by \cite[Proposition~A.2]{HilderSharma24}, there exist constants $C(\eta_0^\eps), D(N^\eps) > 0$ such that
    \begin{equation*}
        \|\eta_t^\eps - \hat{\rho}^\eps\|_{\TV} \leq C(\eta_0^\eps) e^{-D(N^\eps) t}
    \end{equation*} 
    for all $t \geq 0$. Next, we recall that $N^\eps \rightarrow L^{\av}$ as $\e \to 0$, see \eqref{eq:quant-estimate-diff-generators}. Revisiting the proof of \cite[Proposition~A.2]{HilderSharma24} we can, in fact, find constants $C, D > 0$ independent of $\e, t$ and the initial data $\eta_0^\eps$ such that 
    \begin{equation*}
        \|\eta_t^\eps - \hat{\rho}^\eps\|_{\TV} \leq C e^{-D t}
    \end{equation*}
    for all $t \geq 0$ and $\eps \in (0,\eps_0)$ (potentially choosing a smaller $\eps_0$). 
    Here, we use that $D(N^\eps)$ can be bounded from below by the spectral gap of $N^\eps$. To obtain the independence of the initial data, we note in the proof of \cite[Proposition~A.2]{HilderSharma24} that $\P(\Y)$ can be interpreted as a bounded subset of $\R^{|\Y|}$. Since the spectrum of a finite-dimensional matrix is continuous in the matrix entries, the spectral gap of $N^\eps$ converges to the (positive) spectral gap of $L^{\av}$. In summary, we obtain that
    \begin{equation*}
        \|\eta_t^\eps - \pi^\mathrm{av}\|_{\TV} \leq C (e^{-D t} + \eps),
    \end{equation*}
    which can be made arbitrarily small uniformly in $t \geq T_1$ and $\eps \in (0,\eps_0)$ by choosing $T_1$ sufficiently large and $\eps_0 > 0$ sufficiently small.

    Using \eqref{eq:time-derivative-for-error-estimate} and \eqref{eq:cg-lower-bound-gLSI} we thus obtain that
    \begin{equation*}
        \dfrac{\dd}{\dd t} \RelEnt_{\eta_t^\eps} \left(\frac{\hat{\mu}_t^\eps}{\eta_t^\eps}\right) \leq -\beta \RelEnt_{\eta_t^\eps} \left(\frac{\hat{\mu}_t^\eps}{\eta_t^\eps}\right)
    \end{equation*}
    for all $t \geq T_1$ and $\eps \in (0,\eps_0)$.
    An application of Gronwall's inequality thus yields 
    \begin{equation}\label{eq:large-time-estimate}
        \RelEnt_{\eta_t^\eps} \left(\frac{\hat{\mu}_t^\eps}{\eta_t^\eps}\right) 
        \leq \RelEnt_{\eta_{T_1}^\eps} \left(\frac{\hat{\mu}_{T_1}^\eps}{\eta_{T_1}^\eps}\right) e^{-\beta (t - T_1)} 
        \leq \e C(T_1) e^{\beta T_1}  e^{-\beta t} \eqqcolon \eps C_1 e^{-\beta t}
    \end{equation}
    for all $t \geq T_1$ and $\eps \in (0,\eps_0)$, where in the second inequality we have applied the finite-time estimate \eqref{eq:finite-time-estimate}. Thus, combining the finite-time estimate \eqref{eq:finite-time-estimate} and the large-time estimate \eqref{eq:large-time-estimate}, we find the desired error bound \eqref{eq:better-error-estimate-ms}. This immediately implies the estimate \eqref{eq:better-error-estimate-ms-uniform}, which completes the proof. \comm{[Note: we are careful here, because \eqref{eq:large-time-estimate} only holds for large time, but we claim \eqref{eq:better-error-estimate-ms} for all time]}
\end{proof}

\begin{rem}
    In contrast to the case without explicit scale separation, we expect that the averaged dynamics \eqref{eq:averaged_dynamics} is a good approximation of the coarse-grained dynamics $\hat{\mu}^\eps_t$. Specifically, the coarse-grained generator converges to the averaged one as $\eps \to 0$ uniformly on finite time intervals, see \cite[Lemma 3.4]{HilderPeletierSharmaTse20}. Moreover, error estimates are also available in this case, see e.g.~\cite[Remark 16.2]{PavliotisStuart08}.
\end{rem}

To complete the proof of Theorem \ref{thm:error-estimate-ms}, it is left to establish an error estimate between the steady states of the effective generator $N^\eps$ and the averaged generator $L^\eps$. In particular, this estimate complements the error estimate for two solutions of the effective equation \eqref{eq:eff} and the averaged equation \eqref{eq:averaged_dynamics} obtained in \cite[Theorem~4.4]{HilderSharma24}. However, we point out that \cite[Theorem~4.4]{HilderSharma24} does not include an estimate between steady states due to the difference in initial data being contained in the bound.

\begin{lem}\label{lem:error-steady-states}
    Let $N^\eps$ and $L^\mathrm{av}$ be given as in \eqref{eq:eff} and \eqref{eq:aver-gen}, respectively, and let $\hat{\rho}^\eps$ and $\pi^\mathrm{av}$ be the corresponding steady states. Then, there exists $C > 0$ and $\eps_0 > 0$ such that
    \begin{equation*} %\label{eq:estimate-equilibria}
        \|\hat{\rho}^\eps - \pi^\mathrm{av}\|_{\mathrm{TV}} \leq C\eps
    \end{equation*}
    for all $\eps \in (0,\eps_0)$.
\end{lem}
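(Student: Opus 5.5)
Since $|\Y|=2$, both steady states are explicit, and the plan is to reduce the estimate to $\|N^\eps - L^{\av}\|\le C\eps$, which is \eqref{eq:quant-estimate-diff-generators}. Any irreducible generator on two states has the form
\[
N^\eps = \begin{pmatrix} -a_\eps & a_\eps \\ b_\eps & -b_\eps \end{pmatrix}, \qquad a_\eps = N^\eps(0,1) > 0,\quad b_\eps = N^\eps(1,0) > 0 .
\]
Its unique steady state is therefore
\[
\hat\rho^\eps = \frac{1}{a_\eps+b_\eps}\,(b_\eps, a_\eps).
\]
Similarly, \eqref{eq:aver-gen} gives
\[
\pi^{\av} = \frac{1}{\lambda_0+\lambda_1}\,(\lambda_1,\lambda_0),
\]
and $\lambda_0,\lambda_1>0$ by irreducibility of $L^\eps$ together with positivity of $\pi_y$. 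Thus $\hat\rho^\eps = \Psi(a_\eps,b_\eps)$ and $\pi^{\av} = \Psi(\lambda_0,\lambda_1)$ for the smooth map
\[
\Psi(a,b) = \frac{(b,a)}{a+b},
\]
defined on the open quadrant $\{a,b>0\}$.

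The first step is to note that $|a_\eps-\lambda_0| + |b_\eps - \lambda_1| \le C\|N^\eps - L^{\av}\| \le C\eps$ for $\eps\in(0,\eps_0)$. If one prefers not to rely on \cite[Eq.~(43)]{HilderSharma24}, this can be obtained directly: from \eqref{eq:eff}, $a_\eps = \sum_{z,z'} \rho^\eps(z|0)\, G_{0,1}(z,z')$, which is linear in $\rho^\eps(\cdot|0)$. Hence $|a_\eps - \lambda_0| \le \|G\|\,\|\rho^\eps(\cdot|0) - \pi_0\|_{\TV}$, and it would suffice to show $\|\rho^\eps(\cdot|y) - \pi_y\|\le C\eps$.

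That last bound is a regular perturbation argument. The stationarity equation $(Q^T + \eps G^T)\rho^\eps = 0$, restricted to block $y$, reads
\[
Q_y^T \rho^\eps_y = -\eps\,(G^T\rho^\eps)_y ,
\]
where the right-hand side is $O(\eps)$ because $\rho^\eps$ is a probability vector. Normalising by $\hat\rho^\eps(y)$ turns this into $Q_y^T \rho^\eps(\cdot|y) = O(\eps)/\hat\rho^\eps(y)$. Since $Q_y$ is irreducible, $Q_y^T$ is invertible on $\langle\1\rangle^{\perp}$ with kernel spanned by $\pi_y$. Both $\rho^\eps(\cdot|y)$ and $\pi_y$ have mass one, so their difference lies in $\langle\1\rangle^\perp$, and we get $\|\rho^\eps(\cdot|y)-\pi_y\|\le C\eps/\hat\rho^\eps(y)$.

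The main obstacle is a uniform-in-$\eps$ lower bound on $\hat\rho^\eps(y)$. I would obtain it from the qualitative convergence $\hat\rho^\eps\to\pi^{\av}>0$ of \cite[Lemma~4.3]{HilderSharma24}: after shrinking $\eps_0$, this gives $\hat\rho^\eps(y)\ge \tfrac12\min\pi^{\av}$.

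Finally, shrinking $\eps_0$ further ensures $(a_\eps,b_\eps)$ stays in a compact neighbourhood $K$ of $(\lambda_0,\lambda_1)$ inside the open quadrant. On $K$ the map $\Psi$ is Lipschitz with constant $\sup_K|\nabla\Psi|<\infty$. This yields
\[
\|\hat\rho^\eps - \pi^{\av}\|_{\TV} \le \operatorname{Lip}_K(\Psi)\,\bigl(|a_\eps-\lambda_0|+|b_\eps-\lambda_1|\bigr) \le C\eps ,
\]
which is the claim.
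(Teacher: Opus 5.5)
Your proof is correct, but it takes a different route from the paper's.

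\textbf{The paper's route.} The paper never computes the steady states. It subtracts the two stationarity equations to obtain $(N^\eps)^T(\hat\rho^\eps-\pi^{\mathrm{av}}) = -(N^\eps-L^{\av})^T\pi^{\mathrm{av}}$ and notes that both sides lie in $\lrang{\1}^\perp$. The restriction $T^\eps\coloneqq (N^\eps)^T|_{\lrang{\1}^\perp}$ is invertible on that subspace. Moreover, $(T^\eps)^{-1}$ is uniformly bounded, because $T^\eps$ converges to the invertible $T^0\coloneqq (L^{\av})^T|_{\lrang{\1}^\perp}$. The estimate then follows from \eqref{eq:quant-estimate-diff-generators}.

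\textbf{Your route.} You use $|\Y|=2$ to write $\hat\rho^\eps=\Psi(a_\eps,b_\eps)$ and $\pi^{\mathrm{av}}=\Psi(\lambda_0,\lambda_1)$ explicitly, and then invoke local Lipschitz continuity of $\Psi$. Take $K$ to be a closed ball, so that the mean-value bound is justified. Alternatively, compute directly
$\hat\rho^\eps(0)-\pi^{\mathrm{av}}(0)=\frac{\lambda_0(b_\eps-\lambda_1)-\lambda_1(a_\eps-\lambda_0)}{(a_\eps+b_\eps)(\lambda_0+\lambda_1)}$.

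\textbf{Comparison.} Both proofs rest on the same input, namely the $O(\eps)$ bound on $N^\eps-L^{\av}$.
\begin{itemize}
\item Your approach is more elementary and yields a constant explicit in $\lambda_0,\lambda_1$. In the paper's proof, the constant enters through $\|(T^\eps)^{-1}\|$, which is controlled only qualitatively via convergence.
\item The paper's argument works verbatim for any finite $\Y$, as claimed in the remark after the lemma. Yours would need something like the Markov chain tree theorem to go beyond two macro-states.
\end{itemize}

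\textbf{Your optional derivation.} Your self-contained derivation of $\|\rho^\eps(\cdot|y)-\pi_y\|\le C\eps$ is also sound. It is really the paper's invertibility-on-$\lrang{\1}^\perp$ idea applied one level down, to the $\eps$-independent $Q_y^T$, where no uniformity argument is needed. The price is that you need a lower bound on $\hat\rho^\eps(y)$, read as the block mass $\xi_\#\rho^\eps(y)$, which is indeed the steady state of $N^\eps$. Citing the qualitative convergence $\hat\rho^\eps\to\pi^{\mathrm{av}}$ for this lower bound is legitimate and not circular.
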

\begin{rem}
    We highlight that the result of Lemma \ref{lem:error-steady-states} is not restricted to $|\Y| = 2$ as in the example \eqref{eq:CG-res-gen}. Indeed the proof works for any finite reduced state space $\Y$.
\end{rem}
\begin{proof}
    Since $\hat{\rho}^\eps$ and $\pi^\mathrm{av}$ are steady states, we have that
    \begin{equation*}
        0 = (N^\eps)^T \hat{\rho}^\eps - (L^\mathrm{av})^T \pi^\mathrm{av} = (N^\eps)^T (\hat{\rho}^\eps - \pi^\mathrm{av}) + ((N^\eps)^T - (L^\mathrm{av})^T) \pi^\mathrm{av}
    \end{equation*}
    and therefore,
    \begin{equation}\label{eq:diff-steady-states}
        (N^\eps)^T (\hat{\rho}^\eps - \pi^\mathrm{av}) = - (N^\eps - L^\mathrm{av})^T \pi^\mathrm{av}.
    \end{equation}
    We wish to multiply both sides of~\eqref{eq:diff-steady-states} by $(N^\eps)^{-T}$ on the left, however, $(N^\eps)^T$ is not invertible. Our strategy is to reduce $\R^{|\Y|}$
    to a subspace on which $(N^\eps)^T$ is invertible. With this aim, we first note that
    \begin{equation*}
        (\1, \hat{\rho}^\eps - \pi^\mathrm{av}) = 0, \qquad (\1, (N^\eps - L^\mathrm{av})^T \pi^\mathrm{av}) = ((N^\eps - L^\mathrm{av})\1,\pi^\mathrm{av}) = 0,
    \end{equation*}
    where we use that $\hat{\rho}^\eps, \pi^\mathrm{av} \in \P(\Y)$ and that $N^\eps$ and $L^\mathrm{av}$ are generators. Therefore, both $\hat{\rho}^\eps - \pi^\mathrm{av}$ and $(N^\eps - L^\mathrm{av})^T\pi^\mathrm{av}$ are in $\lrang{\1}^\perp$.

    Since $N^\eps$ is a generator and thus $\mathrm{dim}(\mathrm{ker}(N^\eps)) = 1$, we have that $\mathrm{rank}((N^\eps)^T) = \mathrm{rank}(N^\eps) = |\Y|-1$ due to the rank-nullity theorem 
    and that $(\1, (N^\eps)^T v) = (N^\eps \1, v) = 0$ for all $v \in \R^{|\cY|}$. 
    These observations demonstrate that $\mathrm{ran}((N^\eps)^T) = \lrang{ \1 }^\perp$. Using in addition that $\mathrm{ker}((N^\eps)^T) = \lrang{ \hat{\rho}^\eps }$ (because $N^\eps$ is an irreducible generator and $\hat{\rho}^\eps$ is its unique steady state), we obtain from $\hat{\rho}^\eps \notin \lrang{ \1 }^\perp$ 
    that the restriction $T^\eps \coloneqq (N^\eps)^T |_{\lrang{ \1 }^\perp}$ is invertible as a linear map from $\lrang{ \1 }^\perp$ to itself. 
    Moreover, $(T^\eps)^{-1}$ is bounded uniformly in $\eps \in (0,\eps_0)$. To see this, note that $N^\eps \to L^\mathrm{av}$ as $\eps \to 0$, see~\eqref{eq:quant-estimate-diff-generators}, and that $L^\mathrm{av}$ is also a generator. Since $L^{\av}$ is a generator, the restriction $T^0 \coloneqq (L^{\av})^T \vert_{\lrang{\1}^\perp}$ is also invertible as a map from $\lrang{\1}^\perp$ into itself. Therefore, we have a sequence of invertible maps $T^\eps : \lrang{\1}^\perp \to \lrang{\1}^\perp$, which converge to an invertible map $T^0 : \lrang{\1}^\perp \to \lrang{\1}^\perp$. Therefore, $(T^\eps)^{-1}$ converges to $(T^0)^{-1}$ and is thus uniformly bounded. 
    Taking $(T^\eps)^{-1}$ on both sides of \eqref{eq:diff-steady-states}, we obtain \comm{[1: $T^\eps$ has full rank on $\lrang{ \1 }^\perp$ can be described by a square matrix. 2: $\hat\rho^\eps-\pi^{\av}\in \lrang{\1}^\perp$ for every $\eps$]}
    \begin{equation*} %\label{eq:non-zero-spectrum-est}
        \|\hat{\rho}^\eps - \pi^\mathrm{av}\|
        \leq 
        \|(T^\eps)^{-1}\|
        \|(N^\eps)^T - (L^\mathrm{av})^T\|
        \|\pi^\mathrm{av}\|
        \leq C \eps
    \end{equation*}
    for $\e$ small enough, where we used $\|(N^\eps)^T - (L^\mathrm{av})^T\|  \leq C \eps$, see \eqref{eq:quant-estimate-diff-generators}, in the last estimate. Using that all norms are equivalent on $\R^{|\Y|}$ this completes the proof.
\end{proof}

\subsubsection{Assessing the quality of coarse-graining maps}\label{sec:different-cg-maps}

We now return to the general error estimates without scale separation in Proposition \ref{prop:error-estimate-no-scale-sep} and recall that the constant in the error estimate depends on $\alpha^{-1}$ with $\alpha \leq \min_{y \in \Y} \alpha_{\gLSI}(\rho(\cdot | y), M^y)$. Using the expression for $\alpha$ given in \eqref{eq:alpha}, we can now give an estimate of the quality of the chosen coarse-graining map $\xi$. For this, we note that since the constant in the error estimate behaves like $\alpha^{-1}$, a larger $\alpha$ yields a smaller error and thus indicates a "better" coarse-graining map $\xi$.
While $\alpha$ given in \eqref{eq:alpha} is only a lower bound for the gLSI constants $\alpha_{\gLSI}(\rho(\cdot | y) , L^y)$ and thus only gives a sufficient criterion to evaluate its quality, we also provide an upper bound which is given by $\lambda_y$, the spectral gap of the symmetrised generator corresponding to the pair $L^y$ and $\rho(\cdot | y)$, see \eqref{eq:symGen} and \eqref{eq:upper-bound-gLSI}. Therefore, the spectral gap $\lambda_y$ together with the minimum of the conditional measure $\rho(\cdot | y)$ provide necessary and sufficient criteria to evaluate the quality of the chosen coarse-graining map.

We demonstrate this for the example Markov chain, which is generated by
\begin{equation}\label{eq:generator-example}
    L = \begin{pmatrix}
            -3\eps^{-1} & 2\eps^{-1} & \eps^{-1} & 0 & 0 & 0 \\
            \eps^{-1} & - 1 - 3\eps^{-1} & 2\eps^{-1} & 1 & 0 & 0 \\
            2\eps^{-1} & \eps^{-1} & -2-3\eps^{-1} & 0 & 0 & 2 \\
            0 & 2 & 0 & -2 - 3\eps^{-1} & \eps^{-1} & 2\eps^{-1} \\
            0 & 0 & 0 & 2\eps^{-1} & -3\eps^{-1} & \eps^{-1} \\
            0 & 0 & 1 & \eps^{-1} & 2\eps^{-1} & -1 - 3\eps^{-1}
        \end{pmatrix}
\end{equation}
with $\eps > 0$. This is a generator of a non-reversible Markov chain on the state space $\X = \{1,2,3,4,5,6\}$, and the corresponding network indicating the transitions between the states is depicted in Figure \ref{fig:transition-graph}. Moreover, the steady state is given by
\begin{equation*}
    \rho = \dfrac{1}{42+27\eps} (7+4\eps, 7+6\eps, 7+3\eps, 7+3\eps, 7+5\eps, 7+6\eps)^T
\end{equation*}
and is also $\eps$-dependent. In fact, this generator aligns with the setting in Section \ref{sec:application-CG} and exhibits explicit scale separation. This makes it convenient to evaluate the quality of the coarse-graining map. Since the calculations are extensive, we refrain from including all details here and only present important intermediate results. Instead, we refer to the supplementary material \cite{supplementary-material}, where the full calculations are provided using Mathematica \cite{mathematica}. 

\begin{figure}[ht!]
    \centering
    \includegraphics[width=0.8\linewidth]{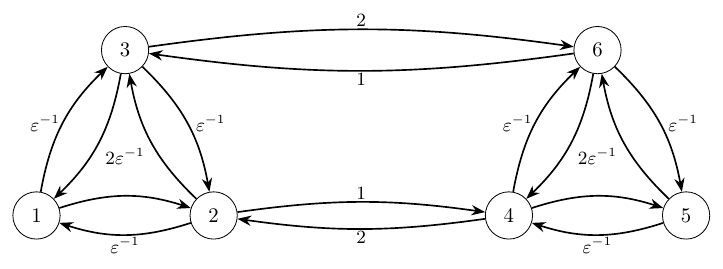}
    \caption{Depiction of the transitions of the Markov chain generated by \eqref{eq:generator-example}.}
    \label{fig:transition-graph}
\end{figure}

The natural choice for the coarse-graining map is given by $\cY = \{a,b\}$ and 
\begin{equation*}
    \xi(x) = \begin{dcases}
        a, & x \in \{1,2,3\}, \\
        b, & x \in \{4,5,6\}.
    \end{dcases}
\end{equation*}
In fact, this is the choice of $\xi$ discussed in Section \ref{sec:error-estimates-with-scale-sep}.
In this case, the restricted generators $L^y$ are given by
\begin{equation*}
    L^a = \dfrac{1}{\eps} \begin{pmatrix}
        -3 & 2 & 1 \\
        1 & -3 & 2 \\
        2 & 1 & -3
    \end{pmatrix}, \qquad 
    L^b = \dfrac{1}{\eps}\begin{pmatrix}
        -3 & 1 & 2 \\
        2 & -3 & 1 \\
        1 & 2 & -3
    \end{pmatrix}
\end{equation*}
and the conditional measures are
\begin{equation*}
    \rho(\cdot | a) = \dfrac{1}{21 + 13\eps} (7+4\eps, 7+6\eps, 7+3\eps)^T, \qquad \rho(\cdot | b) = \dfrac{1}{21 + 14\eps}(7+3\eps, 7+5\eps, 7+6\eps)^T.
\end{equation*}
Notice that the conditional steady states are indeed $\eps$-dependent, which stems from the non-reversibility of the generator $L$. With a direct calculation, we then find that the spectral gaps of the symmetrised generators $L^{a,\rho(\cdot|a)}$ and $L^{b,\rho(\cdot|b)}$ are given by
\begin{equation*}
    \lambda_a = -\frac{9}{2 \eps }+\frac{1}{2}\sqrt{\frac{3}{7}}+\left(-\frac{3}{28}-\frac{199}{196 \sqrt{21}}\right)
   \eps + \mathcal{O}(\eps ^2), \quad \lambda_b = -\frac{9}{2 \eps }+\frac{1}{2}\sqrt{\frac{3}{7}}+\left(-\frac{3}{28}-\frac{179}{196 \sqrt{21}}\right)
   \eps +\mathcal{O}(\eps ^2).
\end{equation*}
and therefore,
\begin{equation*}
    \alpha = \frac{9}{2  (2+\log (3))\eps}+ \mathcal{O}(1) = \mathcal{O}(\eps^{-1}).
\end{equation*}
Thus, restricting to strictly positive initial data satisfying
\begin{equation*}
    \RelEnt_{\eta_0}\left(\dfrac{\hat{\mu}_0}{\eta_0}\right) \leq C \eps
\end{equation*}
the improved estimate \eqref{eq:error-est-non-cg} yields that
\begin{equation*}
    \RelEnt_{\eta_t}\left(\dfrac{\hat{\mu}_t}{\eta_t}\right) \leq C\eps,
\end{equation*}
which matches Theorem \ref{thm:error-estimate-ms}.

Finally, to illustrate a ``bad" choice for the coarse-graining map, let
\begin{equation*}
    \tilde{\xi}(x) \coloneqq \begin{dcases}
        a, & x \in \{1,2,4\}, \\
        b, & x \in \{3,5,6\}.
    \end{dcases}
\end{equation*}
This does not reflect the scale separation of $L$ since it mixes micro-states from different macro-states. Indeed, we find that
\begin{equation*}
    \tilde{L}^a = \begin{pmatrix}
        -2\eps^{-1} & 2\eps^{-1} & 0 \\
         \eps^{-1} & -1-\eps^{-1} & 1 \\
         0 & 2 & -2
    \end{pmatrix} \qquad \tilde{L}^b = \begin{pmatrix}
        -2 & 0 & 2 \\
         0 & -\eps^{-1} & \eps^{-1} \\
         1 & 2\eps^{-1} & -1-2\eps^{-1}
    \end{pmatrix}
\end{equation*}
and proceeding as above yields
\begin{equation*}
    \tilde{\alpha} = \frac{9}{4 (2+\log (3))}+\mathcal{O}(\eps) = \mathcal{O}(1).
\end{equation*}
Thus, as expected, Proposition \ref{prop:error-estimate-no-scale-sep} yields no decay of the error between $\hat{\mu}$ and $\eta$ as $\eps \to 0$.

\begin{rem} \label{r:lamy}
    A key step in the above calculations is the spectral gap of the symmetrised generators $L^{y,\rho(\cdot | y)}$ for $y \in \cY$, which requires solving an eigenvalue problem. However, since the symmetrised generators are self-adjoint matrices on $\R^{|\Lambda_y|}$ equipped with the weighted inner product $(\cdot, \cdot)_{\rho(\cdot | y)}$, the spectral gap can be obtained from a variational problem
    \begin{equation*}
        \lambda_y = \sup_{f \in \langle \1 \rangle^\perp} \dfrac{(f, L^{y,\rho(\cdot | y)} f)_{\rho(\cdot | y)}}{(f,f)_{\rho(\cdot | y)}}.
    \end{equation*}
\end{rem}

\section{Discussion}
\label{sec:discussion}

In this paper, we introduce a generalised version of the classical Poincar\'e and log-Sobolev inequalities for reference measures which are not the steady state of the underlying Markov chain. We prove that the associated constants have similar properties to the classical variants and also depend continuously on the reference measure. We establish a hierarchy between the constants and obtain an explicit lower bound, which is determined by the smallest positive entries of the reference measure and the generator. Moreover, we find an upper and lower bound in terms of the spectral gap of a symmetrised generator. Finally, we apply our framework to establish exponential convergence of two distributions of the same Markov chain and to obtain quantitative error estimates for an effective dynamics in coarse-graining applications.

We now comment on several open questions and interesting directions of future research. 

\paragraph{Assessing quality of coarse-graining maps.} 
Consider a Markov chain $L$ on $\cX$ and a coarse graining map $\xi : \cX \to \cY$ as in Sec 5.3. The corresponding value $\alpha = \alpha(\xi)$ in \eqref{eq:alpha} is a measure for how well the effective dynamics mimic the coarse-grained dynamics, and can therefore be used as a quality score for the choice of $\xi$. This is exemplified in Section~\ref{sec:different-cg-maps}, where the `natural' choice $\xi$ and the `bad' choice $\tilde \xi$ for the coarse graining maps yield $\alpha(\xi) \gg \alpha(\tilde\xi)$.

Beyond providing a diagnostic tool, the quality score $\alpha(\xi)$ induces a principled strategy of constructing coarse-graining maps where $\alpha(\xi)$ can be viewed as an objective function to be maximised. This is in line with the established metastability viewpoint that good macroscopic (or coarse) variables should group states that equilibrate rapidly under the microscopic dynamics, while states separated by slow transitions should remain in distinct coarse clusters. 
This gLSI approach differs from the classical lumpability~\cite{KemenySnell69,Buchholz94} and spectral~\cite{DeuflhardWeber05} approaches. Lumpability identifies a valid coarse-grained chain only under very strong algebraic constraints on the generator -- each microstate in a cluster must have exactly the same transition rate to every other cluster -- a condition that is rarely met in practice. Spectral methods identify metastable sets by analysing the eigenvectors close to the zero eigenvalue of the generator, which encodes the chain's global slow modes and operates entirely at the equilibrium level. In contrast, the gLSI approach, which does not rely on exact algebraic structure or stationarity, instead measures whether the restricted dynamics on each cluster exhibits sufficient \emph{internal contractivity} relative to the chosen reference measure. This provides a criterion for selecting coarse clusters that remain dynamically meaningful even when the system is far from equilibrium. 
Finally, the variational character of the spectral gap for the symmetrised generator  
suggests a natural machine learning perspective wherein one can parametrise a family of coarse-graining maps $\xi_\theta$ and optimise $\theta$ as to maximise $\alpha(\xi_\theta)$. 

\paragraph{Connections to $\Gamma$-calculus.}
A natural question raised by the generalised functional inequalities introduced in this work is whether they admit a geometric characterisation in the spirit of Bakry-\'Emery theory especially beyond reversibility. 
Bakry-\'Emery $\Gamma$-calculus provides a framework which links curvature-dimension conditions of the type $\Gamma_2\geq \alpha\Gamma$, where $\Gamma$ is the carr\'e-du-champ operator and $\Gamma_2$ is the corresponding iterated operator, to functional inequalities for diffusion processes~\cite{BakryGentilLedoux14}. In particular, the curvature condition implies the LSI with constant $\alpha$. 

The picture changes fundamentally for Markov chains due to lack of a \emph{chain rule} which is not available due to the non-local nature of jump processes. More precisely, for Markov chains the usual carr\'e-du-champ operator 
\begin{equation*}
    \Gamma(f,g)(z)=\frac12 \sum_{z'\in Z} M(z,z')\big(f(z')-f(z)\big)\big(g(z')-g(z)\big)
\end{equation*}
is well defined. The first derivative of the entropy satisfies 
\begin{equation*}
    \frac{\dd}{\dd t}\RelEnt_\pi(\varphi_t) = -\sum_{z\in\Z} \pi(z)\Gamma(\varphi_t,\log\varphi_t)(z)
\end{equation*}
where $\varphi_t=\frac{\mu_t}{\pi}$, $\frac d{dt} \mu_t=M^T\mu_t$ and $M^T\pi=0$. However, as opposed to the diffusive case, we cannot control the second derivative of relative entropy purely in terms of the `diffusive' $\Gamma_2$ defined for Markov chains as 
\begin{equation*}
    \Gamma_2(f)=\frac12\big(M\Gamma(f,f)-2\Gamma(f,Mf) \big).
\end{equation*}
Two nonlinear frameworks have been developed to address this issue in the reversible Markov-chain setting. 
The entropic–transport approach of~\cite{erbarMaas12,FathiMaas16} defines curvature via convexity of entropy along discrete transport geodesics, while discrete state-space $\Gamma$-calculus~\cite{CaputoDaiPraPosta09,WeberZacher21} introduces an entropy-adapted carré-du-champ $\Gamma_{\Upsilon}$ leading to the exact entropy identities and the LSI. Both these approaches are intrinsically nonlinear and rely on reversibility and stationarity.

In view of this it is natural to ask whether the generalised PI and LSI considered in this work can be related to a curvature notion that remain meaningful for non-reversible Markov chains and possibly with a non steady-state reference measure. 
% \US{Need to add a few more lines.}

\paragraph{Countably infinite state space and coarse-grained diffusion processes.} 
Throughout the article, we restrict to continuous-time Markov chains on finite state spaces. However, we expect that the definitions of the generalised functionals, see \eqref{def:genDirichlet} and \eqref{def:genFisher}, and the generalised Poincar\'e and log-Sobolev inequalities, see \eqref{def:genPI} and \eqref{def:genLSI}, generalise to the case of a countably infinite state space, cf.~\cite{HilderPeletierSharmaTse20}, where an alternative generalisation of the Fisher information (see Remark \ref{rem:alternative-generalistion}) was introduced on countable state spaces. Moreover, we expect that the properties of the generalised functionals in Sections \ref{sec:Gen}, and some properties of the gPI and gLSI constants, in particular the continuity, see Theorem \ref{thm:continuity-alpha-LSI}, generalise under appropriate additional assumptions, such as assuming that $M$ has uniformly bounded rates, which guarantees that the sequence $(M(z,z'))_{z' \in \Z}$ is integrable for any $z \in \Z$. However, our current proof of the lower bounds presented in Section \ref{s:al:gLSI:LB} relies heavily on the fact that the reference measure is strictly positive, i.e.~$\zeta_\ast > 0$, which fails in countably infinite state spaces. Therefore, under what conditions a lower bound can still be obtained in the countably infinite setting is still open. Similarly, the generalisation of the error estimates for clustering of Markov chains (see Section \ref{sec:application-CG}) to infinite state spaces is an open question, and we refer to \cite[Section 6]{HilderSharma24} for a discussion.

We now turn to Markov processes on a continuous state space, specifically the coarse-graining of diffusion processes in molecular dynamics. Here, coarse-graining estimates similar to Section~\ref{sec:application-CG} have been developed for both reversible~\cite{LegollLelievreSharma18,LegollLelievreOlla17,ZhangHartmannSchutte16,LelievreZhang18} and non-reversible setting~\cite{DLPSS18,LegollLelievreSharma18,HartmannNeureitherSharma20}, under the key assumption that the conditional steady-state of the process, denoted by $\rho(\cdot|\xi(z)=y)$, satisfies either a LSI or a PI inequality with respect to a coarse-grained generator $L^y$ -- the notation is as in Section~\ref{sec:application-CG} to indicate similarities. In the reversible setting it turns out that $(L^y)^*\rho(\cdot|\xi(z)=y)=0$, where $(L^y)^*$ is the adjoint operator, and therefore the LSI inequality follows under classical growth conditions on the coefficients~\cite{CGAW09,BakryGentilLedoux14}. However, this relationship is not clear in the setting of non-reversible diffusions discussed in~\cite{LegollLelievreSharma18,HartmannNeureitherSharma20}. 

In light of these observations, it is natural to ask whether the non-equilibrium principles developed in this paper can be extended to diffusion processes. Several natural questions follow from here. For instance, how can one formulate practical lower bounds for the diffusion version of the gLSI constant (note that in the diffusion setting the entropy-dissipation is the same for both the reversible and non-reversible setting), and do analogous spectral lower bounds as in Theorem~\ref{thm:pos} carry over? It is also natural to wonder about the scaling behaviour of these gPI, gLSI inequalities in the presence of explicit scale-separation which connects coarse-graining to classical averaging problems~\cite{HartmannNeureitherSharma20}. Finally, can the gLSI inequality (or a suitable lower bound) be used as a diagnostic tool to compare coarse-graining maps? This is especially relevant in the field of molecular dynamics.

\section*{Acknowledgements}
PvM was supported by JSPS KAKENHI, grant Numbers JP20K14358 and JP24K06843.
BH was partially supported by the Swedish Research Council -- grant no.~2020-00440 -- and the Deutsche Forschungsgemeinschaft (DFG, German Research Foundation) -- Project-IDs 444753754 and 543917644.

\section*{Data availability statement}

The supplementary material providing the detailed calculations for the example in Section \ref{sec:different-cg-maps} is available at \url{https://github.com/Bastian-Hilder/non-equilibrium-functional-ineq}, see also \cite{supplementary-material}. 

\appendix

\section{Auxiliary results}\label{app:failure-classical-inequalities}

The following result shows that the classical Poincar\'e inequality with constant
\[
  \tilde \alpha_{\PI}(\zeta)
  = \inf_{\substack{f\in\R^{|\Z|} \\ f\notin \langle\1\rangle}} \frac{ (f,-Mf)_\zeta}{\Var_\zeta(f)},
\]
see \eqref{def:class-PI-con-genNaive},
only makes sense when $\zeta$ is the steady state. 

\begin{prop}\label{prop:cFI-Fail}
    Let $\zeta\in\P_+(\Z)$. We have $\tilde \alpha_{\PI}(\zeta)>0$ if and only if $\zeta=\pi$. Moreover, if $\zeta\neq \pi$, then $\tilde \alpha_{\PI}(\zeta) = -\infty$.  
\end{prop}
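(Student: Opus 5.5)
The plan is to relate the naive quadratic form $(f,-Mf)_\zeta$ to the generalised Dirichlet form through the identity in \eqref{def:genDirichlet},
\[
(f,-Mf)_\zeta = \calE_\zeta(f) - \tfrac12 (\1, Mf^2)_\zeta = \calE_\zeta(f) - \tfrac12 (M^T\zeta, f^2),
\]
where $(\cdot,\cdot)$ is the flat inner product. The correction term is linear in $f^2$ with weight $w \coloneqq M^T\zeta$. If $\zeta=\pi$, then $w=0$ and the naive form coincides with $\calE_\pi(f)$. Hence $\tilde\alpha_{\PI}(\pi)=\alpha_{\gPI}(\pi)$, which is positive by Theorem~\ref{thm:pos} (or by the classical Poincar\'e inequality). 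This settles the direction "$\zeta=\pi \Rightarrow \tilde\alpha_{\PI}(\zeta)>0$".

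For $\zeta\neq\pi$ I will show directly that $\tilde\alpha_{\PI}(\zeta)=-\infty$, which gives both the remaining direction and the "moreover" statement. Since $\pi$ is the unique probability vector in $\ker M^T$ and $\zeta\neq \pi$, we have $w\neq 0$. Moreover $\sum_z w(z)=(\zeta,M\1)=0$, so $w$ has a strictly positive entry; fix $z_0$ with $w(z_0)>0$.

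The test functions will be $f_\lambda \coloneqq \1 + \lambda e_{z_0}$ with $\lambda\to\infty$, where $e_{z_0}$ is the indicator of $z_0$. The heuristic is that the concentrated term $w(z_0)f(z_0)^2$ grows quadratically with a good sign. However, $\calE_\zeta$ and $\Var_\zeta$ also grow quadratically, so a pure scaling of $e_{z_0}$ does not obviously win. I therefore prefer to exploit the invariance under adding constants. The functional $\tilde F(f)\coloneqq (f,-Mf)_\zeta$ satisfies $\tilde F(c\1+g) = \tilde F(g) + c\,(\1,-Mg)_\zeta = \tilde F(g) - c\,(w,g)$, because $M\1=0$ and $(\1,Mg)_\zeta=(w,g)$. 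Meanwhile $\Var_\zeta(c\1+g)=\Var_\zeta(g)$ does not depend on $c$.

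So the key step is to choose a nonconstant $g$ with $(w,g)\neq 0$, for instance $g=e_{z_0}$, which gives $(w,g)=w(z_0)>0$; such a $g$ exists precisely because $w\neq0$. Then letting $c\to+\infty$ yields $\tilde F(c\1+g)/\Var_\zeta(g)\to-\infty$ with a fixed positive denominator. Therefore $\tilde\alpha_{\PI}(\zeta)=-\infty$, and in particular it is not positive.

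The main obstacle is only conceptual: seeing that the naive form lacks invariance under constant shifts when $\zeta$ is not stationary, whereas the variance has this invariance. Once the identity for $\tilde F(c\1+g)$ is written down, the rest is routine, and I will check carefully the sign convention $(\1,Mg)_\zeta=(M^T\zeta,g)$.
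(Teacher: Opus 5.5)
Your proposal is correct and follows essentially the paper's argument: shifting $f=c\1+g$ leaves $\Var_\zeta$ unchanged but adds the term $c\,(\1,-Mg)_\zeta=-c\,(M^T\zeta,g)$ to the naive form, which can be sent to $-\infty$ whenever this coefficient is nonzero. Your way of showing that the coefficient is nonzero for some $g$ is a slightly more direct variant of the paper's eigenvector/Perron--Frobenius step: you use that $\ker M^T=\langle\pi\rangle$, so $M^T\zeta\neq 0$ for $\zeta\neq\pi$, and then take the explicit witness $g=e_{z_0}$.
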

\begin{proof}
Take any $f \in \R^{|\Z|}$ with $f\notin \langle\1\rangle$ and use \eqref{eq:Rz-directSum} to decompose it as $f=c\1+h$ with $c\in \R$ and $h\in \langle\1\rangle^{\perp_\zeta}$ with $h \neq 0$. Then,
\begin{equation*}
    (f,-Mf)_\zeta = (h,-Mh)_\zeta + c(\1,-Mh)_\zeta, \ \ \Var_\zeta(f) = (h^2,\1)_\zeta=(h,h)_\zeta, 
\end{equation*}
which leads to 
\begin{align*}
    \tilde \alpha_{\PI}(\zeta) 
    = \inf_{c\in \R} 
    \inf_{\substack{ h\in \langle\1\rangle^{\perp_\zeta} \\ h \neq 0 }}\frac{(h,-Mh)_\zeta + c(1,-Mh)_\zeta}{(h,h)_\zeta}.
\end{align*}
Clearly $\tilde \alpha_{\PI}(\zeta)=-\infty$ unless $(1,-Mh)_\zeta=0$ for every $h\in \langle\1\rangle^{\perp_\zeta}$. We claim that the latter is impossible unless $\zeta = \pi$. If this claim holds, then the result follows, since $\tilde \alpha_{\PI}(\pi) = \alpha_{\PI}(\pi) >0$.

It is left to prove the claim that $\zeta = \pi$ if and only if $(1,-Mh)_\zeta=0$ for every $h\in \langle\1\rangle^{\perp_\zeta}$. Note that $(1,-Mh)_\zeta=0$ for every $h\in \langle\1\rangle^{\perp_\zeta}$ if and only if $\zeta$ is an eigenvector of $M^T$. Since $M$ is an irreducible generator, the Perron-Frobenius theorem applies, and states that the only eigenvector of $M^T$ with strictly positive entries is the steady state $\pi$.   
\end{proof}

\begin{rem}
    While we lack a similar general result for the classical log-Sobolev inequality, it is still straightforward to find examples where the naive generalisation of the classical log-Sobolev constant is negative for any measure $\zeta$ other than the steady state $\pi$.

    In fact, already for a two-point state space $\Z = \{1,2\}$ such an example exists. Indeed, take the generator
    \begin{equation*}
        M = \begin{pmatrix}
            -1 & 1 \\ a & -a
        \end{pmatrix}
    \end{equation*}
    for an arbitrary $a > 0$, and take $\zeta = (\beta, 1- \beta)$ for any $\beta \in (0,1)$ such that $\zeta \neq \pi = (\tfrac{a}{1+a}, \tfrac{1}{1+a})$. Then, for any density $\varphi \in \cD_\zeta(\Z)$ we calculate the naive generalisation of the relative Fisher information, see \eqref{def:Fisher}:
    \begin{equation*}
        (\varphi, -M \log \varphi)_\zeta 
        = \log\left(\dfrac{\varphi_1}{\varphi_2}\right) (\varphi_1 \beta - \varphi_2 (1-\beta) a) 
        = \varphi_2 \log\left(\dfrac{\varphi_1}{\varphi_2}\right) \left(\dfrac{\varphi_1}{\varphi_2} \beta - (1-\beta) a\right) =: \varphi_2 F\left(\dfrac{\varphi_1}{\varphi_2}\right).
    \end{equation*}
    A straightforward extreme value analysis of $F$ then shows that there exists a $\gamma > 0$ such that $F(\gamma) < 0$ (we use here $\beta \neq \tfrac{a}{1+a}$, that is, $\zeta \neq \pi$). Recalling that the relative entropy $\RelEnt_\zeta$ is non-negative, this shows that the naive generalisation of the classical log-Sobolev inequality constant is generically negative.
\end{rem}

%Bibliography 
\bibliographystyle{alphainitials}
\bibliography{BibArXiv.bib}

\end{document}